\newtheorem{thm}{Theorem}[section]  
\newtheorem{lem}{Lemma}[section]
\newtheorem{prop}{Proposition}[section]
\newtheorem{rem}{Remark}[section]
\def\X{\mathcal{X}}
\def\T{\mathcal{T}}
\def\R{\mathbb{R}}
\def\N{\mathbb{N}}
\begin{document}


%
%

\title{CONVERGENCE OF A GREEDY ALGORITHM FOR HIGH-DIMENSIONAL CONVEX NONLINEAR PROBLEMS}

\author{Eric Canc\`es, Virginie Ehrlacher and Tony Leli\`evre}

\maketitle
\begin{center}
Universit\'e Paris Est, \'Ecole des Ponts - Paristech, CERMICS,\\ 
6 \& 8 avenue Blaise Pascal\\ 77455 Marne-la-Vall\'ee Cedex 2\\ FRANCE\\[3pt]
\{cances,ehrlachv,lelievre\}@cermics.enpc.fr\\
\end{center}


\begin{abstract}
In this article, we present a greedy algorithm based on a tensor product decomposition, whose aim is to compute the global minimum of a strongly convex energy functional. We 
prove the convergence of our method provided that the gradient of the energy is Lipschitz on bounded sets. The main interest of this  
method is that it can be used for high-dimensional nonlinear convex problems. We illustrate this method on a prototypical example for uncertainty 
propagation on the obstacle problem.
\end{abstract}

Keywords: Greedy algorithm; high dimension; obstacle problem; uncertainty quantification.

\section{Introduction}

The main motivation for this work comes from two important and challenging problems in contemporary scientific computing:
\begin{itemize}
\item the uncertainty quantification for some nonlinear models in mechanics, and more precisely, for contact problems;
\item the computation of some high-dimensional functions in molecular dynamics.
\end{itemize}

Concerning the first domain of application which is the main focus of this work, there is now a wide literature
 on the subject, ranging from specific questions related to the modeling of the noise sources (in particular of their correlation),
 to dedicated methods for the evaluation of events with very small probabilities (reliability). The focus of this paper is rather
 on the development of methods to compute efficiently a {\em reduced model} which rapidly gives the output of interest as a function of the random
 variable which enters the input parameters,  in the context of contact problems in continuum mechanics. Such a model can then be used to evaluate
 the distribution of the outputs (for a given distribution of the input parameters), or to reduce the variance in a Monte Carlo computation for example. 
Many methods have been proposed in the literature to attack this problem\cite{NouyReview,Ghanem}: stochastic collocation methods, Galerkin
 methods, perturbation methods, etc. To be more specific, let us assume that the noise on the parameters of the model can be modeled by a possibly large number
 of random variables $T=(T_1, \ldots, T_p) \in \mathbb{R}^p$, so that the quantity of interest (say the displacement field) $u(t,x)$ is a function of $(p+d)$ variables,
 where $d$ is the dimension of the physical space. The question is then how to approximate a function on such a high-dimensional space. The natural idea at
 the basis of many methods is to look for the solution to this problem as a linear combination of tensor products:
$$
u(t,x)=\sum_{i=1}^l \sum_{j=1}^m U^{ij} \, \phi_i(t) \psi_j(x),
$$
where $(\phi_i)_{1 \leq i \leq l}$ and $(\psi_j)_{1 \leq j \leq m}$ are bases of vector spaces of dimension $l$ and $m$ respectively which are fixed 
a priori, and where $(U^{ij})_{1 \leq i \leq l, 1\leq j \leq m}$ are real numbers to be computed. 
This method leads to the resolution of a problem in a vector space of dimension $N=lm$ which may be very large.
 This difficulty becomes all the more pregnant if $p$ is really large, so that the solution should be  typically approximated as a sum:
\begin{equation}
\label{devlpt}
 u(t,x)=\sum_{i_1=1}^l ... \sum_{i_p=1}^l \sum_{j=1}^mU^{i_1,..., i_p,j} \, \phi^1_{i_1}(t_1) \ldots \phi^p_{i_p}(t_p) \psi_j(x).
\end{equation}
In this case, $N=l^p m$ will be too large for a classical discretization method. The method we are studying is a way to circumvent this difficulty.

The second application we have in mind is the computation of the solution to a high-dimensional Poisson equation arising in molecular dynamics,
 called the committor function\cite{Weinan}. Mathematically, this function gives the probability for a stochastic process to reach a given region 
(say $A \subset \mathbb{R}^d$) before another one (say $B \subset \mathbb{R}^d$).
 Using Feynman-Kac formula, it can be shown that this function satisfies a Poisson equation in a weighted Sobolev space, with Dirichlet boundary conditions
 (namely $1$ on $A$ and $0$ on $B$).  Typically, the stochastic process lives in a high-dimensional space ($d$ is large), so that computing this function is a challenge.

In both cases, the difficulty comes from the high-dimensionality of the function to approximate. 
The principle of the method we are interested in is:
 (i) to rewrite the original problem as a minimization problem:
\begin{equation}\label{eq:minimisation}
 u \in \mathop{\mbox{argmin}}_{v \in V} {\mathcal E}(v)
\end{equation}
where ${\mathcal E}$ is a functional defined on some Hilbert space $V$ and (ii) to expand the solution in tensor products of lower-dimensional functions 
\begin{equation}\label{eq:approx}
 u_n(t,x)=\sum_{k=1}^n r_k(t) s_k(x).
\end{equation}
In practice, for each $k$, the functions $r_k$ and $s_k$ are computed as linear combinations of the functions of the bases $(\phi_i)_{1\leq i \leq l}$ and 
$(\psi_j)_{1\leq j\leq m}$ so that
\begin{equation}\label{eq:formrk}
r_k(t)  = \sum_{i=1}^l R_k^i \phi_i(t),
\end{equation}
and
\begin{equation}\label{eq:formsk}
s_k(x) = \sum_{j=1}^m S_k^j \psi_j(x),
\end{equation}
where for each $k\in\N^*$, $R_k = (R_k^i)_{1\leq i\leq l} \in \R^l$ and $S_k = (S_k^j)_{1\leq j\leq m} \in \R^m$. 
In the end, computing the approximation (\ref{eq:approx}) leads to a problem of dimension $\widetilde{N} = n(l+m)$ which, provided that $n$ remains small enough, will hopefully 
be lower than the dimension of the problem obtained with the classical approach $N=lm$ when the size of the bases $l$ and $m$ are large.

The reduction of dimension is even more significant when we are in the case of equation (\ref{devlpt}). Indeed, the approximation (\ref{eq:approx}) can be adapted in this case in the following form:
$$
 u_n(t,x)=\sum_{k=1}^n r^1_k(t_1)\cdots r^p_k(t_p) s_k(x).
$$
In this case, the overall dimension of the problem will be $\widetilde{N} = n(pl + m)$ instead of $N = l^pm$ in the classical approach.
\medskip

Such a representation of a function as a sum of tensor product of other functions to avoid the curse of dimensionality have already been introduced in the literature. 
One approach consists in using the so-called sparse tensor product representation\cite{Smolyak,Schwab,Griebel}. If the solution $u$ we wish to approximate is sufficiently regular, one does not 
need to use fine discretizations in each direction. This idea can be used for example in Galerkin-like discretizations. However, this method loses its efficiency in the case when the solution $u$ is not 
regular enough or when the mesh considered is complicated.

\medskip 

We adopt another approach in this article.
The principle of our method is to determine {\em sequentially} the pairs of functions $(r_k,s_k)$ which intervene in the approximation (\ref{eq:approx}) through 
the following minimization problem:
\begin{equation}\label{eq:min_pb}
(r_n,s_n) \in \mathop{\mbox{argmin}}_{(r,s) \in V_t \times V_x} {\mathcal E}\left(\sum_{k=1}^{n-1}  r_k(t) s_k(x) + r(t)s(x) \right),
\end{equation}
where $V_t$ and $V_x$ in (\ref{eq:min_pb}) denote respectively Hilbert spaces of functions depending only on the variable $t$ or only on the variable $x$.

To rewrite the two problems mentioned above as minimization problems on Hilbert spaces, we penalize the constraints, namely the presence of the obstacle
 for the contact problem, or the Dirichlet boundary conditions for the high-dimensional Poisson problem.

The method described above has been introduced by Chinesta\cite{Chinesta} for solving high-dimensional Fokker-Planck equations, by Nouy\cite{Nouy09} in the context of uncertainty
 quantification in mechanics, and is very much related to so-called greedy algorithms\cite{Temlyakov,LeBr09} used in nonlinear approximation theory.
 The main contributions 
of this work are the following:
\begin{itemize}
\item the convergence of the greedy algorithm (\ref{eq:approx})-(\ref{eq:min_pb}) to the unique solution of (\ref{eq:minimisation}) is proved, under the key assumptions that ${\mathcal E}$ is strongly convex and that the gradient of $\mathcal{E}$ is 
Lispchitz on bounded sets;
\item an exponential rate of convergence is obtained in the finite dimensional case;
\item an adequate procedure to solve the minimization subproblem~\eqref{eq:min_pb} is proposed and tested on an academic test case.
\end{itemize}
This paper can be seen as an extension of previous works on greedy algorithms \cite{Temlyakov,LeBr09} which concentrate on the linear case,
 namely when ${\mathcal E}(v)=\frac{1}{2}\|v\|_V^2 - L(v)$, where $\|\cdot\|_V$ is the norm of the Hilbert space $V$, and $L$ a continuous linear form on $V$.

We would like to stress that even if all the results and proofs are provided in the context of tensor products of two functions, our results
 can be easily generalized to the case of tensor products of more than two functions such as (\ref{devlpt}) {\em except for the results in
 Section~5}. We have chosen not to present the results in this general setting for the sake of clarity.

The paper is organized as follows. In Section~2, we introduce the general setting for the problem we consider and state the main result
 of this paper, namely the convergence of the greedy algorithm. Section~2 also presents more precisely the two specific examples of application
 we have in mind. Section~3 is devoted to the proof of the convergence. In Section~4, an exponential rate of convergence is proved, in the finite dimensional setting (i.e. when $V_t$ and $V_x$
 are finite dimensional spaces). Section~5 shows that, under specific additional assumptions
 which are typically satisfied in the context of uncertainty quantification, the convergence results also hold if $(r_n,s_n)$ in~\eqref{eq:min_pb}
 is only a local minimum. Finally, Section~6 is devoted to a discussion of the numerical implementation, as well as to the presentation of test cases on a toy model.

\section{Presentation of the problem and the convergence result}

In this paper, we are interested in the convergence of a greedy algorithm for the minimization of high-dimensional nonlinear convex problems. 

We first introduce the general theoretical setting in which we prove the convergence, then describe two prototypical examples to which our analysis can be applied.

\subsection{General theoretical setting}

Throughout this article, $p$ and $d$ denote some positive integers, and $\mathcal{T}$ and $\mathcal{X}$ some open sets of $\mathbb{R}^p$ and
 $\mathbb{R}^d$ respectively.

Let $V_t$ and $V_x$ be Hilbert spaces of real-valued functions respectively
 defined over $\mathcal{T}$ and $\mathcal{X}$ (typically $L^2$ or Sobolev spaces). Let $\| . \|_t$ and $\| . \|_x$ be the norms of $V_t$ and $V_x$.

We introduce the following tensor product for all $(r,s) \in V_t\times V_x$, 
\begin{equation}
\label{tensor}
r\otimes s :\left\{
\begin{array}{ccc}
 \mathcal{T}\times \mathcal{X} & \rightarrow & \mathbb{R} \\
(t,x) & \mapsto & r(t)s(x)\\
\end{array}\right. ,
\end{equation}
which defines a real-valued function on $\mathcal{T} \times \mathcal{X}$.

We also denote by $\Sigma = \left\{ r\otimes s\; | \; (r, s)\in V_t \times V_x\right\}$.

Let $V$ be a Hilbert space of real-valued functions defined on $\mathcal{T}\times \mathcal{X}$. The 
scalar product of $V$ is denoted by $\langle .,.\rangle$ and the associated norm by $\|.\|_V$.

Let $\mathcal{E}$ be a differentiable real-valued functional defined on $V$. 
 For all $v\in V$, we denote by $\mathcal{E}'(v)$ the 
gradient of $\mathcal{E}$ at $v$.

We make the following assumptions:

\begin{description}
 \item { $(A1)$ } $\mbox{Span}(\Sigma)$ is a dense subset of $V$ for $\|.\|_V$;
\item { $(A2)$ } for all sequences of $\Sigma$ bounded in $V$, there exists a subsequence which weakly converges in $V$ towards an element of $\Sigma$;
\item{ $(A3)$ } the functional $\mathcal{E}$ is strongly convex for $\|.\|_V$, i.e. there exists 
a constant $\alpha\in \mathbb{R}_+^*$ for which
\begin{equation}
 \label{alpha}
\forall v,w\in V,\; \mathcal{E}(v) \geq \mathcal{E}(w) + \langle \mathcal{E}'(w),v-w\rangle + \frac{\alpha}{2} \|v-w\|_V^2.
\end{equation}
The functional $\mathcal{E}$ is also said to be $\alpha$-convex;

\item{ $(A4)$ } the gradient of $\mathcal{E}$ is Lipschitz on bounded sets: 
for each bounded subset $K$ of $V$, there exists a nonnegative constant $L_K\in\mathbb{R}_+$ such that
\begin{equation}
\label{Lipschitz}
 \forall v, w\in V,\quad \|\mathcal{E}'(v) - \mathcal{E}'(w)\|_V \leq L_K\|v-w\|_V.
\end{equation}

\end{description}

The unique global minimizer of $\mathcal{E}$ on $V$ is denoted by $u$. Its existence and uniqueness are ensured by the $\alpha$-convexity of the functional $\mathcal{E}$:
$$u  = \mathop{\mbox{argmin}}_{v\in V} \mathcal{E}(v).$$

\vspace{0.5cm}
We are going to study the following algorithm: the sequence $\left((r_n, s_n)\right)_{n\in\mathbb{N}^*} \in \left(V_t \times V_x\right)^{\mathbb{N}^*}$ is defined recursively by
\begin{equation}
\label{algorithm}
(r_n,s_n) \in \mathop{\mbox{argmin}}_{(r,s)\in V_t\times V_x} \mathcal{E}\left(\sum_{k=1}^{n-1} r_k \otimes s_k +r \otimes s\right).
\end{equation}

Throughout this article, we will denote for all $n\in\mathbb{N}^*$,
\begin{equation}
 \label{undef}
u_n = \sum_{k=1}^n r_k \otimes s_k .
\end{equation}

Our main result is the following theorem, whose proof is given in Section~3.

\begin{thm}
Under the assumptions $(A1)$, $(A2)$, $(A3)$ and $(A4)$, the iterations of the algorithm are well-defined, in the sense that (\ref{algorithm}) has at least one 
minimizer $(r_n, s_n)$. Moreover, 
the sequence $(u_n)_{n\in\mathbb{N}}$ strongly converges in $V$ towards $u$.  
\end{thm}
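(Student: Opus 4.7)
My plan is to prove the theorem in four stages: well-posedness of each iteration, a descent inequality for the energy, a quantitative estimate on the gradient along rank-one directions, and an identification of weak limits.

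For the first stage, fix $n\geq 1$ and consider $J(r,s):=\mathcal{E}(u_{n-1}+r\otimes s)$. Any minimizing sequence $(r^{(m)},s^{(m)})$ keeps $u_{n-1}+r^{(m)}\otimes s^{(m)}$ in a sublevel set of $\mathcal{E}$, which is bounded thanks to $(A3)$; hence $(r^{(m)}\otimes s^{(m)})$ is bounded in $V$. By $(A2)$, a subsequence converges weakly to some $\bar{r}\otimes\bar{s}\in\Sigma$, and the weak lower semicontinuity of the convex continuous functional $\mathcal{E}$ shows that $(\bar r,\bar s)$ attains the infimum.

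For the second and third stages, testing the greedy step at iteration $n$ with the pair $(0,0)$ shows that $(\mathcal{E}(u_n))$ is non-increasing and converges to some $\mathcal{E}_\infty\geq\mathcal{E}(u)$, and applying $(A3)$ between $u$ and $u_n$ yields $\|u_n-u\|_V^2\leq\frac{2}{\alpha}(\mathcal{E}(u_0)-\mathcal{E}(u))$, so $(u_n)$ is bounded. Writing the Euler--Lagrange conditions at $(r_n,s_n)$ gives $\langle\mathcal{E}'(u_n),r_n\otimes s_n\rangle=0$, and the $\alpha$-convexity inequality between $u_{n-1}$ and $u_n$ then yields $\mathcal{E}(u_{n-1})-\mathcal{E}(u_n)\geq\frac{\alpha}{2}\|r_n\otimes s_n\|_V^2$, so $\sum_n\|r_n\otimes s_n\|_V^2<\infty$ and in particular $\|r_n\otimes s_n\|_V\to 0$. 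Fix next a bounded $K\subset V$, with Lipschitz constant $L_K$ from $(A4)$, containing the iterates and their admissible perturbations. Combining the greedy inequality $\mathcal{E}(u_{n+1})\leq\mathcal{E}(u_n+\lambda r\otimes s)$ with the standard Lipschitz-gradient bound
$$\mathcal{E}(u_n+\lambda r\otimes s)\leq\mathcal{E}(u_n)+\lambda\langle\mathcal{E}'(u_n),r\otimes s\rangle+\tfrac{L_K}{2}\lambda^2\|r\otimes s\|_V^2,$$
optimizing in $\lambda$ and taking the supremum over rank-one directions shows that $\eta_n:=\sup_{r\otimes s\neq 0}|\langle\mathcal{E}'(u_n),r\otimes s\rangle|/\|r\otimes s\|_V$ satisfies $\eta_n^2\leq 2L_K(\mathcal{E}(u_n)-\mathcal{E}(u_{n+1}))\to 0$. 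Together with the uniform bound on $\|\mathcal{E}'(u_n)\|_V$ and the density $(A1)$ of $\mathrm{Span}(\Sigma)$, this gives $\mathcal{E}'(u_n)\rightharpoonup 0$ weakly in $V$.

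The last stage identifies any weak cluster point $u^*$ of $(u_n)$ with $u$. Assumption $(A3)$ makes $\mathcal{E}'$ an $\alpha$-strongly monotone, hence maximal monotone, operator on $V$. The Brezis--Minty demiclosedness principle asserts that, given the weak convergences $u_{n_k}\rightharpoonup u^*$ and $\mathcal{E}'(u_{n_k})\rightharpoonup 0$, the condition $\limsup_k\langle\mathcal{E}'(u_{n_k}),u_{n_k}-u^*\rangle\leq 0$ would force $\mathcal{E}'(u^*)=0$, hence $u^*=u$ by uniqueness, and moreover $\langle\mathcal{E}'(u_{n_k}),u_{n_k}-u\rangle\to 0$; the strong-monotonicity estimate $\|u_{n_k}-u\|_V^2\leq\frac{1}{\alpha}\langle\mathcal{E}'(u_{n_k}),u_{n_k}-u\rangle$ then upgrades the weak convergence to strong, and uniqueness of the limit promotes this to the full sequence. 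The main obstacle I expect is verifying the $\limsup$ condition: the Euler--Lagrange identity reduces $\langle\mathcal{E}'(u_n),u_n\rangle$ to $\langle\mathcal{E}'(u_n),u_{n-1}\rangle$ but the recursion does not obviously collapse to zero, so one must combine the decay $\|r_n\otimes s_n\|_V\to 0$, the summability of $\sum_n\|r_n\otimes s_n\|_V^2$, the weak convergence of $\mathcal{E}'(u_n)$, and the Lipschitz control from $(A4)$ in a careful telescoping argument.
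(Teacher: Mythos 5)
Your first three stages are sound and essentially parallel the paper: existence of the minimizer via $(A3)$-boundedness of minimizing sequences, $(A2)$ and weak lower semicontinuity; the Euler identity $\langle\mathcal{E}'(u_n),r_n\otimes s_n\rangle=0$ and $\alpha$-convexity giving $\sum_n\|r_n\otimes s_n\|_V^2<\infty$; and a bound on the gradient along rank-one directions (your descent-lemma estimate $\eta_n^2\leq 2L_K\left(\mathcal{E}(u_n)-\mathcal{E}(u_{n+1})\right)$ is a legitimate variant of the paper's Proposition~3.3, which instead derives $|\langle\mathcal{E}'(u_{n-1}),r\otimes s\rangle|\leq A\|r_n\otimes s_n\|_V\|r\otimes s\|_V$ from the greedy optimality, the Lipschitz property and the Euler equation; either version yields $\mathcal{E}'(u_n)\rightharpoonup 0$ by $(A1)$, modulo the minor care needed to keep $u_n+\lambda r\otimes s$ inside the set $K$ on which $L_K$ is valid).

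The genuine gap is exactly the point you flag and then leave open: the condition $\limsup_k\langle\mathcal{E}'(u_{n_k}),u_{n_k}-u^*\rangle\leq 0$, i.e.\ the control of $\langle\mathcal{E}'(u_n),u_n\rangle$. This is the heart of the proof, not a routine telescoping: expanding $u_n=\sum_{k=1}^n r_k\otimes s_k$ and applying the rank-one gradient bound termwise produces $n$ terms, so one only gets $|\langle\mathcal{E}'(u_n),u_n\rangle|\leq \eta_n\sum_{k=1}^n\|r_k\otimes s_k\|_V\leq \eta_n\sqrt{n}\left(\sum_k\|r_k\otimes s_k\|_V^2\right)^{1/2}$, and the factor $\sqrt{n}$ must be beaten; weak convergence of $\mathcal{E}'(u_n)$ and $\|r_n\otimes s_n\|_V\to 0$ alone do not do it. The paper closes this by an elementary lemma (Lemma~3.4): a summable nonnegative sequence $(a_n)$ admits a subsequence with $na_n\to 0$; applied to $a_n=\|r_n\otimes s_n\|_V^2$ together with its estimate $A\|r_{n+1}\otimes s_{n+1}\|_V\|r\otimes s\|_V$, this gives a subsequence along which $\langle\mathcal{E}'(u_n),u_n\rangle\to 0$. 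Your own estimate can close the gap the same way: apply that lemma to the summable energy decrements $a_n=\mathcal{E}(u_n)-\mathcal{E}(u_{n+1})$ to get $n\eta_n^2\to 0$ along a subsequence, hence $\sqrt{n}\,\eta_n\to 0$ there. Once this is done, the maximal-monotonicity/demiclosedness machinery is unnecessary: convexity gives $\mathcal{E}(u_n)\leq\mathcal{E}(u)+\langle\mathcal{E}'(u_n),u_n-u\rangle$, so $\mathcal{E}(u_n)\to\mathcal{E}(u)$ along the subsequence (hence for the whole monotone sequence), and $\alpha$-convexity, $\frac{\alpha}{2}\|u_n-u\|_V^2\leq\mathcal{E}(u_n)-\mathcal{E}(u)$, yields strong convergence of the full sequence directly. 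As written, however, the proposal proves neither the $\limsup$ condition nor a substitute for it, so the convergence claim is not established.
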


\vspace{0.5cm}
\begin{rem}{\normalfont For each $n\in\mathbb{N}^*$, the minimizer of (\ref{algorithm}) is not unique in general. In particular, notice that the function 
$V_t\times V_x \ni (r,s) \mapsto \mathcal{E}\left(\sum_{k=1}^{n-1} r_k \otimes s_k +r \otimes s\right)$ is not convex.}
\end{rem}

\vspace{0.5cm}
\begin{rem}{\normalfont Theorem~2.1 could be generalized to the case of tensor products of more than two Hilbert spaces. 

Indeed, let $q\in\mathbb{N}$ with 
$q\geq 3$. Let $p_1,\cdots, p_q$ be $q$ positive integers. Let $\mathcal{T}_1,\cdots,\mathcal{T}_q$ be $q$ open subsets
 of $\mathbb{R}^{p_1},\cdots,\mathbb{R}^{p_q}$ respectively. We consider  $q$ Hilbert spaces, $V_1,\cdots, V_q$ of real-valued functions defined
 respectively on $\mathcal{T}_1,\cdots,\mathcal{T}_q$. Let $V$ be a Hilbert space of real-valued functions defined on $\mathcal{T}_1 \times \cdots \times \mathcal{T}_q$. 
Let $\mathcal{E}$ be a real-valued differentiable functional defined on $V$. We denote by 
$\Sigma = \left\{ r^{(1)} \otimes \cdots \otimes r^{(q)}\; | \; \left(r^{(1)}, \cdots, r^{(q)}\right) \in V_1 \times \cdots \times V_q \right\}$.
 Our algorithm can then easily be adapted provided that assumptions $(A1)$, $(A2)$, $(A3)$ and $(A4)$ are satisfied:
 $\left(r_n^{(1)}, \cdots, r_n^{(q)}\right) \in V_1 \times \cdots \times V_q$ are defined recursively by
$$\left(r_n^{(1)}, \cdots, r_n^{(q)}\right) \in \mathop{\mbox{argmin}}_{\left(r^{(1)},\cdots, r^{(q)}\right) \in V_1 \times \cdots \times V_q } 
\mathcal{E} \left( \sum_{k=1}^{n-1} r_k^{(1)} \otimes \cdots \otimes r_k^{(q)} +  r^{(1)} \otimes \cdots \otimes r^{(q)} \right).$$ 

Our convergence result also holds in this case. But for the sake of simplicity, we will limit our analysis to the case of only two Hilbert spaces.}
\end{rem}

\begin{rem}{\normalfont
Let $(.,.)$ be the scalar product defined on $\mbox{Span}(\Sigma)$ as: for all $(r_1,r_2,s_1,s_2)\in V_t^2 \times V_x^2$, 
$$(r_1\otimes s_1, r_2 \otimes s_2 ) = \langle r_1, r_2 \rangle_{V_t} \langle s_1, s_2 \rangle_{V_x},$$
where $\langle ., . \rangle_{V_t}$ and $\langle .,.\rangle_{V_x}$ denote the scalar products of $V_t$ and $V_x$ respectively.
Let $\|.\|$ be the cross-norm associated to the scalar product $(.,.)$. The tensor space of $V_t$ and $V_x$, denoted as $V_t\otimes V_x$ is then defined as 
the closure of $\mbox{Span}(\Sigma)$ for the product norm $\|.\|$,
$$V_t\otimes V_x = \overline{\mbox{Span}(\Sigma)}^{\|.\|}.$$

Let us point out that the Hilbert space $V$ is not necessarily equal to $V_t\otimes V_x$, the tensor space of $V_t$ and $V_x$ associated to the tensor product (\ref{tensor}). 
Indeed, an example where our analysis can be applied and where $V\neq V_t \otimes V_x$ is given in Section~2.2.2 (see Remark~2.5.). However, the following inclusion relationship holds: $V_t\otimes V_x \subset V$.}
\end{rem}

\begin{rem}{\normalfont If $V_t$ and $V_x$ are discretized in finite-dimensional spaces of dimension $l$ and $m$, our algorithm consists in solving several problems in dimension $l+m$ 
instead of solving one problem of dimension $lm$. Thus, we can circumvent the curse of high-dimensionality.} 
\end{rem}

\subsection{Prototypical problems} 

To prove that the general theoretical setting we described in Section~2.1 is satisfied on the prototypical problems we present in this section, we need the following lemma,
 which is well-known in distribution theory\cite{Schwa66}. 

\begin{lem}
Let $U\in\mathcal{D}'(\mathcal{T}\times\mathcal{X})$ be a distribution such that for any functions $(\phi, \psi)\in\mathcal{C}^{\infty}_c(\mathcal{T}) \times \mathcal{C}^{\infty}_c(\mathcal{X})$, 
$$\left( U, \phi\otimes\psi\right)_{\left(\mathcal{D}'(\mathcal{T}\times\mathcal{X}), \mathcal{D}(\mathcal{T}\times\mathcal{X})\right)} = 0 .$$

Then $U=0$ in $\mathcal{D}'(\mathcal{T}\times\mathcal{X})$. Moreover, for any two sequences of distributions $R_n\in\mathcal{D}'(\mathcal{T})$ and 
$S_n\in \mathcal{D}'(\mathcal{X})$ such that $\lim_{n\rightarrow\infty} R_n = R$ in $\mathcal{D}'(\mathcal{T})$ and $\lim_{n\rightarrow\infty}S_n = S$ 
in $\mathcal{D}'(\mathcal{X})$, $\lim_{n\rightarrow \infty} R_n \otimes S_n = R \otimes S$ in $\mathcal{D}'(\mathcal{T}\times\mathcal{X})$.

\end{lem}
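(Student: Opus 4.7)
The plan is to deduce both statements from one foundational fact: the algebraic span of simple tensors $\phi\otimes\psi$, with $\phi\in\mathcal{D}(\mathcal{T})$ and $\psi\in\mathcal{D}(\mathcal{X})$, is dense in $\mathcal{D}(\mathcal{T}\times\mathcal{X})$ for its canonical LF topology. Granting this, the first assertion is immediate: the hypothesis together with bilinearity gives $\langle U,\xi\rangle=0$ on the span of simple tensors, and continuity of $U$ together with density forces $\langle U,\chi\rangle=0$ for every $\chi\in\mathcal{D}(\mathcal{T}\times\mathcal{X})$.

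For the density itself, I would fix $\chi\in\mathcal{D}(\mathcal{T}\times\mathcal{X})$ with support in a compact rectangle $K_1\times K_2$, select fixed cutoffs $\eta_1\in\mathcal{D}(\mathcal{T})$ and $\eta_2\in\mathcal{D}(\mathcal{X})$ equal to $1$ on neighborhoods of $K_1$ and $K_2$, extend $\chi$ periodically on a large torus containing $K_1\times K_2$, and consider the partial sums of its Fourier series. These are finite combinations of exponentials $e^{i(k\cdot t+\ell\cdot x)}=e^{ik\cdot t}\cdot e^{i\ell\cdot x}$, hence genuine simple tensors, and they converge to $\chi$ in every $C^r$ norm on the torus because $\chi$ is smooth and periodic after extension. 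Multiplying each partial sum by $\eta_1\otimes\eta_2$ produces elements of $\mathrm{Span}\{\phi\otimes\psi\}$ with supports contained in the fixed compact $\mathrm{supp}\,\eta_1\times\mathrm{supp}\,\eta_2$ that converge to $\chi$ in $\mathcal{D}(\mathcal{T}\times\mathcal{X})$.

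For the second assertion I would invoke the Fubini-type identity for tensor products of distributions. Setting $g_n(t):=\langle S_n(x),\chi(t,x)\rangle_x$ and $g(t):=\langle S(x),\chi(t,x)\rangle_x$, both belong to $\mathcal{D}(\mathcal{T})$ with supports in the fixed compact projection of $\mathrm{supp}\,\chi$, and one has $\langle R_n\otimes S_n,\chi\rangle = \langle R_n,g_n\rangle$ and $\langle R\otimes S,\chi\rangle = \langle R,g\rangle$. The splitting
$$\langle R_n\otimes S_n - R\otimes S,\chi\rangle = \langle R_n-R,g\rangle + \langle R_n,g_n-g\rangle$$
reduces the problem to two pieces. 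The first tends to zero because $R_n\to R$ in $\mathcal{D}'(\mathcal{T})$. For the second, the key claim is $g_n\to g$ in $\mathcal{D}(\mathcal{T})$: at each order $\alpha$, $\partial_t^\alpha(g_n-g)(t) = \langle S_n-S,\partial_t^\alpha\chi(t,\cdot)\rangle_x$, and since $\{\partial_t^\alpha\chi(t,\cdot):t\in K_1\}$ is a compact subset of $\mathcal{D}(\mathcal{X})$ while $S_n-S\to 0$ in $\mathcal{D}'(\mathcal{X})$, the Banach--Steinhaus equicontinuity principle yields uniform convergence on this compact set. Since $R_n$ converges in $\mathcal{D}'(\mathcal{T})$ it is itself equicontinuous, and therefore $\langle R_n,g_n-g\rangle\to 0$.

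The main obstacle, and the most substantive step, is the density of simple tensors in the rather rigid LF topology of $\mathcal{D}$: one must simultaneously control supports (hence the use of fixed cutoffs) and uniform convergence of all derivatives (hence an approximation scheme that converges in $C^\infty$, such as the Fourier series sketched above, or equivalently Stone--Weierstrass polynomial approximation followed by multiplication by a cutoff). Everything after this reduces to standard equicontinuity arguments in the topology of $\mathcal{D}'$.
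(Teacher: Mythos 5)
The paper does not actually prove this lemma: it is quoted as a classical fact of distribution theory with a citation to Schwartz's \emph{Th\'eorie des distributions}, and the authors use it as a black box. Your proposal therefore cannot be compared to an in-paper argument; what you have done is reconstruct the standard textbook proof, and you have done so correctly. The first part is the classical density of $\mathcal{D}(\mathcal{T})\otimes\mathcal{D}(\mathcal{X})$ in $\mathcal{D}(\mathcal{T}\times\mathcal{X})$, obtained by periodization plus truncated Fourier series and fixed cutoffs to keep the supports in a common compact set; the second part is the standard hypocontinuity argument, writing $\langle R_n\otimes S_n-R\otimes S,\chi\rangle=\langle R_n-R,g\rangle+\langle R_n,g_n-g\rangle$ and using Banach--Steinhaus twice (equicontinuity of $\{S_n-S\}$ to get $g_n\to g$ in $\mathcal{D}(\mathcal{T})$ uniformly with all derivatives on the compact family $\{\partial_t^{\alpha}\chi(t,\cdot)\}$, then equicontinuity of $\{R_n\}$ to kill the second term). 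Two harmless details deserve a word if you write this up: the period box must be chosen large enough to contain $\mathrm{supp}\,\eta_1\times\mathrm{supp}\,\eta_2$, not merely $K_1\times K_2$, so that $\eta_1\otimes\eta_2$ times the periodic extension really equals $\chi$; and since the exponentials are complex, one should either work with complex-valued test functions or expand $e^{i(k\cdot t+\ell\cdot x)}$ into products of sines and cosines, which still lie in the span of simple tensors. With those remarks your argument is complete and is exactly the proof the paper's citation points to.
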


\subsubsection{Uncertainty propagation on obstacle problems}

\vspace{0.5cm}
An example of application of our algorithm is the study of uncertainty propagation on obstacle problems. We assume
 that uncertainty can be modeled by a set of $p$ random variables $T_1$, $T_2$, ..., $T_p$, and that the random vector $T=(T_1, ..., T_p)$ takes its values in $\mathcal{T}$.

We consider also that the physical problem is defined over the domain $\mathcal{X}$, which is supposed to be a bounded subset of $\mathbb{R}^d$. If $H$ is a Hilbert
 space of functions defined on $\mathcal{X}$, we denote by
$$L^2_T(\mathcal{T}, H) = \left\{ v:\mathcal{T} \rightarrow H \; | \; \mathbb{E}\left[ \| v(T) \|_H^2 \right] < + \infty \right\},$$
where $\mathbb{E}$ denotes the expectation with respect to the probability law of $T$, and $\|.\|_H$ denotes the norm of $H$. 
We endow $L^2_T(\mathcal{T}, H)$ with the scalar product defined by $\langle v,w\rangle_{L^2_T(\mathcal{T}, H)} = \mathbb{E} \left[ \langle v(T), w(T) \rangle_H \right]$.

\vspace{0.5cm}

A formulation of the obstacle problem with uncertainty is the following\cite{Gros07}. Let $g\in L^2_T(\mathcal{T}, H^1_0(\mathcal{X}))$ and $f\in L^2_T(\mathcal{T}, H^{-1}(\mathcal{X}))$. 
A membrane is stretched over the domain $\mathcal{X}$ and is deflected by some random force having pointwise density $f(T,x)$ for $x\in\mathcal{X}$. At the boundary $\partial \mathcal{X}$, 
the membrane is fixed and in the interior of $\mathcal{X}$ the deflection is assumed to be bounded from below by the function $g(T,x)$ (a random obstacle). Then the deflection 
$z = z(T,x)$ is solution of the following obstacle problem with uncertainty (see figure~1):
\begin{equation}
\label{form1}
 \left\{
\begin{array}{cc}
\left .
\begin{array}{c}
 -\Delta_x z(t,x) \geq f(t,x) \\
z(t,x) \geq g(x,t)\\
\left( \Delta_x z(t,x) + f(t,x) \right) \left( z(t,x) - g(t,x) \right)  =  0\\
\end{array} \right\} & \; \mbox{for a.a.}\; (t,x) \in \mathcal{T}\times \mathcal{X}, \\
z(t,x) = 0 & \; \mbox{for a.a.}\;(t,x) \in \mathcal{T} \times \partial \mathcal{X}.
\end{array}\right .
\end{equation}

\begin{figure}[h]
\centerline{\psfig{file=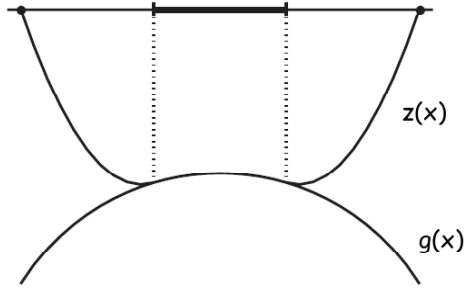,width=8.5cm}}
\vspace*{8pt}
\caption{Obstacle problem.}
\end{figure}

An equivalent formulation of this problem is the following. Let us denote
$$\mathcal{K}_g = \left\{ v\in L^2_T(\mathcal{T}, H^1_0(\mathcal{X}))\; | \; \mbox{for a.a.}\; (t,x) \in \mathcal{T} \times \mathcal{X}, \; v(t,x) \geq g(t,x) \right\}.$$

Solving the obstacle problem~(\ref{form1}) consists in solving the minimization problem
\begin{equation}
\label{obstacle}
\mathop{\inf}_{v\in\mathcal{K}_g} \mathcal{J}(v),
\end{equation}
where $\mathcal{J}(v) = \mathbb{E}\left[ \frac{1}{2} \int_{\mathcal{X}} |\nabla_x v(T,x)|^2dx - \left\langle f(T,.), v(T,.)\right\rangle_{H^{-1}(\mathcal{X}), H^1_0(\mathcal{X})}\right]$.

\vspace{0.5cm}

One of the main difficulties of this kind of problems is their very high nonlinearity. Many methods have been proposed to approximate the solution of these problems in the case without 
uncertainty\cite{Glow76,For82,Glow89,Gros07}.
Among them, penalization methods\cite{Gros07,Glow76} are among the most widely used. They consist in approximating the solution of a given obstacle
 problem by a sequence of solutions of penalized 
problems defined on the entire Hilbert space. 

Let $\rho$ be a parameter in $\mathbb{R}_+$.
Such a penalized problem associated with problem~(\ref{obstacle}) may be defined as
\begin{equation}
\label{penalized}
\mathop{\inf}_{v\in L^2_T(\mathcal{T}, H^1_0(\mathcal{X}))} \mathcal{J}_{\rho}(v),
\end{equation}
where $\mathcal{J}_{\rho}(v) = \mathcal{J}(v) + \mathbb{E} \left[ \frac{\rho}{2} \int_{\mathcal{X}} [g(T,x)-v(T,x)]_+^2 dx\right]$.

Here and below, we denote by $\left[a\right]_+$ the positive part of the real number $a$, i.e. $[a]_+ = 0$ if $a\leq 0$ and $[a]_+ = a$ if $a\geq 0$.

When $\rho$ goes to infinity, the solution $z_{\rho}$ of problem~(\ref{penalized}) strongly converges to the solution $z$ of problem~(\ref{obstacle}).
The goal of the algorithm we described in the previous section is to calculate the solution $u=z_{\rho}$ of this regularized problem for a given value of the parameter $\rho$.

\vspace{0.5cm}

Let us check that the general theoretical setting we described in Section~2.1 can be applied in this case. 

Let us consider $V =  L^2_T(\mathcal{T}, H_0^1(\mathcal{X}))$, $V_t = L^2_T(\mathcal{T}, \mathbb{R})$, $V_x=  H_0^1(\mathcal{X})$ and $\mathcal{E}(v) = \mathcal{J}_{\rho}(v)$ for $v\in V$. 
We have $\Sigma = \left\{ r\otimes s \; | \; (r,s)\in V_t \times V_x\right\}$. We endow $H_0^1(\mathcal{X})$ with the scalar product defined by
 $\langle s_1, s_2 \rangle_{H_0^1(\mathcal{X})} = \int_{\mathcal{X}} \nabla s_1(x).\nabla s_2(x) dx$.
\vspace{0.2cm}
In this case, we have $V = V_t \otimes V_x$ and as a consequence assumption $(A1)$ is obviously satisfied. 

\vspace{0.2cm}
Besides, assumption $(A2)$ is satisfied as well. If $((r_n, s_n))_{n\in\mathbb{N}} \in (V_t\times V_x)^{\mathbb{N}}$ is such that $(\|r_n\otimes s_n\|_V)_{n\in\mathbb{N}}$ is bounded, 
it is possible to extract a subsequence which weakly converges in $V$ towards an element $w\in V$. Besides, 
there exists a non-negative constant $C\in\mathbb{R}_+$ such that for all $n\in\mathbb{N}$,
\begin{eqnarray*}
\|r_n\otimes s_n\|_V^2 & = & \mathbb{E}\left[ \int_{\mathcal{X}} |\nabla_x \left(r_n\otimes s_n\right) (T,x)|^2dx \right] \\
& = & \mathbb{E}\left[|r_n(T)|^2\right] \int_{\mathcal{X}} |\nabla_x s_n(x)|^2 dx \\
& = & \|r_n\|_{V_t}^2 \|s_n\|_{V_x}^2 \\
& \leq & C .\\
\end{eqnarray*}

We can then choose $((r_n^*, s_n^*))_{n\in\mathbb{N}} \in (V_t\times V_x)^{\mathbb{N}}$ such that $r_n^* \otimes s_n^* = r_n \otimes s_n$ and $\|r_n^*\|_{L^2_T(\mathcal{T}, \mathbb{R})} = 1$.
The sequences $(r_n^*)_{n\in\mathbb{N}}$ and $(s_n^*)_{n\in\mathbb{N}}$ are then bounded in $L^2_T(\mathcal{T}, \mathbb{R})$ and $H_0^1(\mathcal{X})$ respectively
and we can extract subsequences which weakly converge in $L^2_T(\mathcal{T}, \mathbb{R})$ and $H_0^1(\mathcal{X})$ towards $r^{\infty}\in L^2_T(\mathcal{T}, \mathbb{R})$ and $s^{\infty}\in H_0^1(\mathcal{X})$ respectively. As the weak convergences in $L^2_T(\mathcal{T}, \mathbb{R})$ and $H_0^1(\mathcal{X})$ imply 
the convergences in the distributional sense, the sequence $r_n^* \otimes s_n^* = r_n \otimes s_n$ necessarily converges towards $r^{\infty}\otimes s^{\infty}$
 in $\mathcal{D}'(\mathcal{T}\times \mathcal{X})$ by Lemma 2.1. 
As the weak convergence in $V$ also implies the convergence in the sense of the distributions, we obtain, by uniqueness of the limit,
 $w=r^{\infty} \otimes s^{\infty}\in \Sigma$. Hence assumption $(A2)$ is satisfied.

The functional $\mathcal{E}$ is differentiable and $1$-convex. Indeed, for all $v\in V$, 
$$\mathcal{E}(v) = \frac{1}{2}\|v\|_V^2 + \left(\mathbb{E}\left[\left\langle f(T,.), v(T,.)\right\rangle _{H^{-1}(\mathcal{X}), H^1_0(\mathcal{X})}+ \frac{\rho}{2}\int_{\mathcal{X}}[g(T,x)-v(T,x)]_+^2dx\right]\right),$$
is the sum of a $1$-convex function ($V\ni v \mapsto \frac{1}{2}\|v\|_V^2$) and of a convex function ($V\ni v \mapsto \mathbb{E}\left[\left\langle f(T,.), v(T,.)\right\rangle _{H^{-1}(\mathcal{X}), H^1_0(\mathcal{X})}+ \frac{\rho}{2}\int_{\mathcal{X}}[g(T,x)-v(T,x)]_+^2dx\right]$).
 The functional $\mathcal{E}$ therefore obeys property (\ref{alpha}) with $\alpha = 1$. Hence, assumption $(A3)$ is satisfied.

Let us finally check that the gradient of $\mathcal{E}$ is Lipschitz.
For all $v,w,y\in V$,
\begin{eqnarray*}
\left| \langle \mathcal{E}'(v) - \mathcal{E}'(w), y \rangle \right| & \leq & \left| \mathbb{E}\left[ \int_{\mathcal{X}} \nabla_x(v(T,x)-w(T,x)).\nabla_xy(T,x) dx \right]\right| \\
 & &  +\rho\left|\mathbb{E}\left[ \int_{\mathcal{X}} ([g(T,x)-v(T,x)]_+ - [g(T,x) -w(T,x)]_+)y(T,x)dx \right]\right| \\
&\leq & \|v-w\|_V\|y\|_V \\
&&+ \rho \mathbb{E}\left[ \int_{\mathcal{X}}  \left|[g(T,x)-v(T,x)]_+ - [g(T,x) -w(T,x)]_+\right| \left|y(T,x)\right|dx \right].\\
\end{eqnarray*}

For $a,b\in\mathbb{R}$, it can easily be seen that $|[a]_+-[b]_+|\leq |a-b|$. This implies
\begin{eqnarray*}
\left| \langle \mathcal{E}'(v) - \mathcal{E}'(w), y\rangle \right| &  \leq & \|v-w\|_V\|y\|_V + \rho \mathbb{E}\left[ \int_{\mathcal{X}} |v(T,x)- w(T,x)| \;|y(T,x)|dx\right] \\
& \leq & \|v-w\|_V\|y\|_V\\
&& + \rho \left(\mathbb{E}\left[ \int_{\mathcal{X}} |v(T,x)- w(T,x)]|^2 dx\right]\right)^{1/2}\left(\mathbb{E}\left[ \int_{\mathcal{X}} |y(T,x)]|^2 dx\right]\right)^{1/2} .\\
\end{eqnarray*}

The Poincar\'e inegality in $H^1_0(\mathcal{X})$ implies that there exists a nonnegative constant $D\in\mathbb{R}_+$ such that for all $h\in V$,
$$\left|\mathbb{E}\left[ \int_{\mathcal{X}} |h(T,x)]|^2 dx\right]\right|^{1/2} \leq D\|h\|_V.$$ 

This yields
$$   \left| \langle \mathcal{E}'(v) - \mathcal{E}'(w), y \rangle \right| \leq (1+\rho D^2)\|v-w\|_V \|y\|_V,$$
hence,
$$\|\mathcal{E}'(v)-\mathcal{E}'(w)\|_V \leq (1+ \rho D^2)\|v-w\|_V.$$
The functional $\mathcal{E}$ then obeys property (\ref{Lipschitz}) 
with a constant $L= 1 + \rho D^2$ independent of the bounded set considered.

Thus, our obstacle problem (\ref{penalized}) falls into the general theoretical setting introduced in Section~2.1.

\vspace{0.5cm}

There exist several variants of the obstacle problem which could be tackled with our algorithm. We refer to Ref.~\cite{Gros07} or Ref.~\cite{Glow89} for such examples.

\subsubsection{High-dimensional Poisson equation}

\vspace{0.5cm}

Our algorithm may also be used to calculate the solution of other problems than obstacle problems. Other examples are high-dimensional nonlinear Poisson equations. 
A specific application where such high dimensional Poisson equations arise is the calculation of the so-called committor function
 in molecular dynamics\cite{Weinan}, which is an important quantity to compute reaction rates or to derive some effective dynamics for example.

Let $q\in\mathbb{N}^*$. The committor is the solution to the following problem: 
$$z = \mathop{\mbox{argmin}}_{ v \in W} \frac{1}{2}\int_{\mathbb{R}^q \setminus (\overline{A} \cup \overline{B})} |\nabla v (y)|^2 \exp(-U(y)) \, dy$$
where $q$ is typically large, $A$ and $B$ are disjoint smooth open sets of $\mathbb{R}^q$, $U : \mathbb{R}^q \to \mathbb{R}$ is a given potential function
 such that $\int_{\mathbb{R}^q} \exp(-U)< \infty$ and
 $$W= \left\{ 
\begin{array}{c}
 v\in L^2_{loc}(\mathbb{R}^q),\\
 \int_{\mathbb{R}^q  \setminus (\overline{A} \cup \overline{B})} |\nabla v (y)|^2\exp(-U(y)) \,dy< \infty, \\
 v=1 \text{ on $\overline{A}$} \text{ and }  v=0 \text{ on $\overline{B}$} \\
\end{array}
\right\}.$$
For $y \in \mathbb{R}^q \setminus (\overline{A} \cup \overline{B})$, $z(y)$ can be interpreted as the probability that the stochastic process~$Q_t^y$ solution to
$$Q_t^y = y - \int_0^t \nabla U (Q^y_s) \, ds + \sqrt{2} \,  W_t$$
reaches $\overline{A}$ before $\overline{B}$. Here, $W_t$ denotes a $q$-dimensional Brownian motion.

\vspace{0.2cm}
Let $p,d\in\mathbb{N}^*$ such that $q=p+d$. In this example, we consider the case when $C = \mathbb{R}^q \setminus \left (\overline{A} \cup \overline{B}\right)$ is bounded. 
Let $\mathcal{T}$ and $\mathcal{X}$ be open convex bounded subsets of $\mathbb{R}^p$ and $\mathbb{R}^d$ respectively such that 
$\overline{C} \subset \Omega := \mathcal{T} \times \mathcal{X}$ and such that $\mu\left( (A \cup B) \cap \Omega\right) \neq 0$ where $\mu$ denotes the Lebesgue measure. We also 
assume that $U \in \mathcal{C}^{\infty}(\mathbb{R}^q)$. In this case, the initial problem can be rewritten as a minimization problem set on
 $$\widetilde{W}= \left\{ 
v\in H^1(\mathcal{T} \times \mathcal{X}) | \, v=1 \text{ on $A\cap \Omega$} \text{ and }  v=0 \text{ on $B \cap \Omega$} \right\},$$
instead of $W$. Indeed, as $U\in\mathcal{C}^{\infty}(\mathbb{R}^q, \mathbb{R})$ and $\Omega$ is bounded, there exists constants $\gamma, \kappa >0$ such that for all $y\in\Omega$, 
$\gamma \leq \exp(- U(y)) \leq \kappa$. And thus, we have $v\in W$ if and only if $v|_{\Omega } \in \widetilde{W}$, $v|_{\overline{A} \setminus \Omega} = 1$ and $v|_{\overline{B} \setminus \Omega} = 0$.

The penalized version of the committor problem then reads
\begin{equation}
\label{poisson}
 u = \mathop{\mbox{argmin}}_{v\in H^1(\mathcal{T} \times \mathcal{X})} \mathcal{E}(v),
\end{equation}
where
$${\mathcal E}(v) = \frac{1}{2}\int_{\Omega} |\nabla v (y)|^2 \exp(-U(y)) \, dy + \frac{\rho}{2} \left(\int_{A \cap \Omega} |v(y) -1|^2 \, dy + \int_{B \cap \Omega} |v(y)|^2 \, dy\right),$$
for some $\rho >0$.

\vspace{0.5cm}

 Let us check that the general theoretical setting described in Section~2.1 is relevant for this problem.

In this case, we consider $V=H^1(\mathcal{T}\times \mathcal{X})$, $V_t = H^1(\mathcal{T})$ and $V_x = H^1(\mathcal{X})$. 
The inner products that are defined over these Hilbert spaces are the following. For all 
 $v_1, v_2\in V$, $r_1,r_2\in V_t$, $s_1, s_2\in V_x$,
 $$\langle v_1, v_2 \rangle_V = \int_{\mathcal{T}}\int_{\mathcal{X}} \left(\nabla v_1(t,x) . \nabla v_2(t,x) + v_1(t,x) v_2(t,x) \right) \,dt \,dx, $$
 $$\langle r_1, r_2\rangle_{V_t} = \int_{\mathcal{T}} \left(\nabla r_1(t). \nabla r_2(t) + r_1(t)r_2(t) \right) \,dt, $$
 $$\langle s_1, s_2\rangle_{V_x} = \int_{\mathcal{X}} \left( \nabla s_1(x). \nabla s_2(x)+ s_1(x)s_2(x) \right)\,dx.$$

\begin{rem}{Let us point out that in this case, $V\neq V_t \otimes V_x$. Indeed, for all $(r,s)\in V_t \times V_x$, the $V$-norm of the tensor product $r\otimes s$ reads
$$
 \|r\otimes s\|_V^2 =\|r\|_{L^2(\mathcal{T})}^2 \|\nabla s\|_{L^2(\mathcal{X})}^2 + \|\nabla r\|_{L^2(\mathcal{T})}^2 \|s\|_{L^2(\mathcal{X})}^2 + \|r\|_{L^2(\mathcal{T})}^2 \|s\|_{L^2(\mathcal{X})}^2 ,
$$
which is not a cross-norm, equivalent to the norm induced by $\|.\|_{V_t}$ and $\|.\|_{V_x}$ over $V_t\otimes V_x$, which is
$$\|r\otimes s \|_{V_t\otimes V_x} = \|r\|_{V_t}\|s\|_{V_x}.$$
Indeed, let us consider $\mathcal{T} = \mathcal{X} = (0,1)$, $r_l(t) = \frac{1}{l} \sin(l^2\pi t)$ and $s_l(x) = \frac{1}{l}\sin(l^2\pi x)$ 
for $(t,x)\in (0,1)^2$ and $l\in\mathbb{N}^*$. The sequence $\left(\|r_l\otimes s_l \|_V\right)_{n\in\mathbb{N}^*}$ is bounded, but the sequence 
$\left( \|r_l\|_{V_t} \|s_l\|_{V_x} \right)_{l\in\mathbb{N}^*}$ is not.
}
\end{rem}

 \vspace{0.2cm}
 Assumption $(A1)$ holds true, since $\widetilde{\Sigma} = \left\{ r \otimes s \; | \; (r,s)\in \mathcal{C}^{\infty}\left(\overline{\mathcal{T}}\right) \times \mathcal{C}^{\infty}\left(\overline{\mathcal{X}}\right) \right\}$ 
is such that $\widetilde{\Sigma} \subset \Sigma$ and $\mbox{Span}\left( \widetilde{\Sigma} \right)$ is dense in $H^1(\mathcal{T}\times \mathcal{X})$. Hence, $\mbox{Span}\left( \Sigma \right)$ is also dense 
 in $V$. 
 
 \vspace{0.2cm}

Let us prove that assumption $(A2)$ also holds true. If $((r_n, s_n))_{n\in\mathbb{N}} \in (V_t\times V_x)^{\mathbb{N}}$ is such that $(\|r_n\otimes s_n\|_V)_{n\in\mathbb{N}}$ is bounded, we 
 can extract a subsequence of $(r_n \otimes s_n)_{n\in\mathbb{N}^*}$ which weakly converges in $V$ towards an element $w\in V$. Besides, 
 there exists a nonnegative constant $C\in\mathbb{R}_+$ such that for all $n\in\mathbb{N}$,
 \begin{eqnarray*}
 \|r_n\otimes s_n\|_V^2 & = & \int_{\mathcal{T}\times \mathcal{X}} \left( |\nabla r_n(t)|^2|s_n(x)|^2 + |r_n(t)|^2|\nabla s_n(x)|^2 + |r_n(t)|^2 |s_n(x)|^2\right) \,dt \,dx \\
 & = & \|\nabla r_n\|_{L^2(\mathcal{T})}^2 \|s_n\|_{L^2(\mathcal{X})}^2 + \|r_n\|_{L^2(\mathcal{T})}^2 \|\nabla s_n\|_{L^2(\mathcal{X})}^2 + \|r_n\|_{L^2(\mathcal{T})}^2 \|s_n\|_{L^2(\mathcal{X})}^2 \\
 & \leq & C. \\
\end{eqnarray*}
 
 We can then choose $((r_n^*, s_n^*))_{n\in\mathbb{N}} \in (V_t\times V_x)^{\mathbb{N}}$ such that $r_n^* \otimes s_n^* = r_n \otimes s_n$ and such that $\|r_n^*\|_{L^2(\mathcal{T})} = 1$.
 The sequences $(r_n^*)_{n\in\mathbb{N}}$ and $(s_n^*)_{n\in\mathbb{N}}$ are then bounded in $L^2(\mathcal{T})$ and $H^1(\mathcal{X})$ 
 and we can extract subsequences which weakly converge in $L^2(\mathcal{T})$ and $H^1(\mathcal{X})$ respectively towards $r^{\infty}$ and $s^{\infty}$. As the weak convergences in $L^2(\mathcal{T})$ and $H^1(\mathcal{X})$ imply 
 the convergences in the distributional sense, $r_n^* \otimes s_n^* = r_n \otimes s_n$ necessarily converges towards $r^{\infty}\otimes s^{\infty}$ in the distributional sense by Lemma 2.1. 
 As the weak convergence in $V$ also implies the convergence in the sense of the distributions, by uniqueness of the limit, $w=r^{\infty} \otimes s^{\infty}$.
 Let us suppose $w\neq 0$. In that case, we have $r^{\infty} \neq 0$ and $s^{\infty} \neq 0$. Besides, we have
 $$\|w\|_V^2 = \|r^{\infty}\|_{L^2(\mathcal{T})}^2 \| \nabla s^{\infty} \|_{L^2(\mathcal{X})}^2 + \|r^{\infty}\|_{H^1(\mathcal{T})}^2 \| s^{\infty} \|^2_{L^2(\mathcal{X})},$$
 hence
 $$ \|r^{\infty}\|_{H^1(\mathcal{T})}^2 \leq \frac{\|w\|_V^2 }{\| s^{\infty} \|_{L^2(\mathcal{X})}^2}.$$
 As a consequence $\|r^{\infty}\|_{H^1(\mathcal{T})}$ is finite and $r^{\infty} \in H^1(\mathcal{T})$. Hence $w = r^{\infty} \otimes s^{\infty} \in \Sigma$. If $w=0$, then obviously $w\in\Sigma$.
 Hence, assumption $(A2)$ holds true.
 
 \vspace{0.2cm}

The functional $\mathcal{E}$ is differentiable and strongly convex. To prove this, it is sufficient to prove that there exists a constant $\alpha \in \mathbb{R}_+^*$ 
such that for all $v,w\in V$, $\langle \mathcal{E}'(v) - \mathcal{E}'(w), v-w \rangle \geq \alpha \|v-w\|_V^2$. Indeed, there exists $\gamma>0$ such that 
for all $y\in\mathbb{R}^q$, $\exp (-U(y)) \geq \gamma$. Thus, there exists a constant $\delta >0$ such that,
for all $v,w\in V$, 
 $$\langle \mathcal{E}'(v) - \mathcal{E}'(w), v-w\rangle \geq  \delta \left( \int_{\Omega} |\nabla (v-w)|^2 + \int_{A \cap \Omega} |v-w|^2 + \int_{B \cap \Omega} |v-w|^2\right).$$
To prove that the functional $\mathcal{E}$ is strongly convex, it is sufficient to have the following inequality: there exists 
a constant $C_{\Omega}\in \mathbb{R}_+^*$ such that for all $v\in H^1(\Omega)$, 
\begin{equation}
 \label{Poinc}
\int_{\Omega} |\nabla v|^2 + \int_{(A \cup B) \cap \Omega} |v|^2 \geq C_{\Omega}\|v\|_{H^1(\Omega)}^2.
\end{equation}
As $\mathcal{T}$ and $\mathcal{X}$ are bounded open convex subsets of $\mathbb{R}^p$ and $\mathbb{R}^d$ respectively, 
$\Omega$ is then a bounded open convex subset of $\mathbb{R}^q$ such that $\mu( (A\cup B) \cap \Omega) \neq 0$ and inequality (\ref{Poinc}) is a well-known 
Poincare-like inequality. 

Hence, assumption $(A3)$ is satisfied.
 
 \vspace{0.2cm}

Let us check that the gradient of $\mathcal{E}$ is Lipschitz.
For all $v,w,z\in V$,
 \begin{eqnarray*}
 \left| \langle \mathcal{E}'(v) - \mathcal{E}'(w), z \rangle \right| & \leq & \left|  \int_{\Omega} \nabla(v(t,x)-w(t,x)).\nabla z(t,x) \exp(-U(t,x)) \,dt\,dx \right| \\
 &&  + \rho\left|  \int_{(A\cup B) \cap \Omega}(v(t,x)-w(t,x))z(t,x) \,dt\,dx \right| \\
&\leq & \|\exp(-U)\|_{L^{\infty}(\Omega)}\|\nabla(v-w)\|_{L^2(\Omega)}\|\nabla z\|_{L^2(\Omega)}\\
&& + \rho \|v-w\|_{L^2((A\cup B) \cap \Omega)}\|z\|_{L^2((A\cup B) \cap \Omega)}\\
& \leq &  \|\exp(-U)\|_{L^{\infty}(\Omega)}\|\nabla(v-w)\|_{L^2(\Omega)}\|\nabla z\|_{L^2(\Omega)}\\
&& + \rho \|v-w\|_{L^2( \Omega)}\|z\|_{L^2( \Omega)}\\
 & \leq & \|v-w\|_V\|z\|_V(\|\exp(-U)\|_{L^{\infty}(\Omega)}+\rho).\\
 \end{eqnarray*}
Hence
 $$\|\mathcal{E}'(v)-\mathcal{E}'(w)\|_V \leq (\|\exp(-U)\|_{L^{\infty}(\Omega)}+\rho)\|v-w\|_V.$$
  The functional $\mathcal{E}$ therefore obeys property (\ref{Lipschitz}) 
 with a constant $L = \|\exp(-U)\|_{L^{\infty}(\Omega)}+\rho$ independent of the bounded subset considered.
 
 Thus, the committor problem falls into the general theoretical setting introduced in Section~2.1.

\section{Proof of Theorem 2.1} 

\subsection{The iterations are well-defined}

We begin by proving that the iterations of the algorithm are well-defined. For this, we will need the following lemma.

\begin{lem}
 
Let $w$ be a function in $V$. Then there exists a pair $(r,s)\in V_t \times V_x$ such that $\mathcal{E}(w +r\otimes s) < \mathcal{E}(w)$ if and only if $\mathcal{E}'(w) \neq 0$.

\end{lem}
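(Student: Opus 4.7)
The statement is an equivalence, so I would split it into two directions, with all the substance on the ``only if'' implication turned around.

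\textbf{Easy direction ($\Leftarrow$ in the original reading, i.e.\ decrease implies nonzero gradient).} I would argue by contraposition: if $\mathcal{E}'(w)=0$, then by the strong convexity assumption $(A3)$, taking $v = w + r\otimes s$ and the inequality $\mathcal{E}(v)\ge \mathcal{E}(w)+\langle \mathcal{E}'(w),v-w\rangle+\tfrac{\alpha}{2}\|v-w\|_V^2$ immediately gives $\mathcal{E}(w+r\otimes s)\ge \mathcal{E}(w)$ for every $(r,s)\in V_t\times V_x$. This is just one line.

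\textbf{Main direction ($\Rightarrow$, i.e.\ nonzero gradient yields a decreasing tensor).} The plan has three steps. First, using assumption $(A1)$ that $\mathrm{Span}(\Sigma)$ is dense in $V$, together with $\mathcal{E}'(w)\neq 0$, I would produce an element $z\in\mathrm{Span}(\Sigma)$ with $\langle \mathcal{E}'(w),z\rangle \neq 0$: otherwise $\mathcal{E}'(w)$ would be orthogonal to a dense subspace and hence zero. Second, write $z=\sum_{i=1}^N r^{(i)}\otimes s^{(i)}$; by linearity at least one index $i_0$ satisfies $\langle \mathcal{E}'(w),r^{(i_0)}\otimes s^{(i_0)}\rangle\neq 0$, and replacing $r^{(i_0)}$ by $-r^{(i_0)}$ if necessary I may assume this quantity is strictly negative. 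Third, I would do a first-order expansion along the direction $r^{(i_0)}\otimes s^{(i_0)}$: for $\varepsilon>0$ small,
\begin{equation*}
\mathcal{E}(w+\varepsilon\, r^{(i_0)}\otimes s^{(i_0)}) = \mathcal{E}(w) + \varepsilon\,\langle \mathcal{E}'(w),r^{(i_0)}\otimes s^{(i_0)}\rangle + o(\varepsilon),
\end{equation*}
where the $o(\varepsilon)$ bound is made explicit by the Lipschitz-on-bounded-sets assumption $(A4)$ applied on, say, the ball of radius $\|w\|_V+\|r^{(i_0)}\otimes s^{(i_0)}\|_V$. Since the leading term is strictly negative, for $\varepsilon>0$ small enough the whole expression is strictly less than $\mathcal{E}(w)$. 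Setting $r:=\varepsilon r^{(i_0)}$ and $s:=s^{(i_0)}$ (still a pair in $V_t\times V_x$) gives the desired tensor product.

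\textbf{Expected obstacle.} The only conceptually delicate point is the jump from ``there is a linear combination of elementary tensors along which the energy decreases'' to ``there is a single elementary tensor along which it decreases.'' The naive hope that one could just test $\mathcal{E}(w+z)$ for a linear combination $z$ does not directly give a usable pair $(r,s)$, since $\mathrm{Span}(\Sigma)\neq\Sigma$ in general. The trick above handles this cleanly because the derivative $\langle\mathcal{E}'(w),\cdot\rangle$ is linear, so at least one summand must carry the nonzero contribution; but it is important that this step is done at the level of the derivative before invoking $(A4)$ for the Taylor remainder, rather than trying to work nonlinearly with a full finite sum of tensors. Once this is in place, the rest is standard first-order calculus.
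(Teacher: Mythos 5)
Your proof is correct and rests on exactly the same ingredients as the paper's: the density assumption $(A1)$, linearity of $v\mapsto\langle\mathcal{E}'(w),v\rangle$, a first-order expansion in $\varepsilon$ along a single elementary tensor (using that $\varepsilon r\otimes s$ is again in $\Sigma$), and strong convexity for the easy direction. The only difference is organizational: the paper runs the hard implication in contrapositive form (if no pair $(r,s)$ decreases the energy, then $\langle\mathcal{E}'(w),r\otimes s\rangle=0$ for every elementary tensor, hence $\mathcal{E}'(w)=0$ by density), which avoids your step of selecting one summand of $z\in\mathrm{Span}(\Sigma)$ but is otherwise the same argument.
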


\begin{proof}
 
Let $w\in V$  and let us suppose that  $\mathcal{E}(w+r\otimes s) \geq \mathcal{E}(w)$ for all $(r,s) \in V_t \times V_x$. For a given pair $(r,s)$, for all $\varepsilon \in\mathbb{R}$,
$$\mathcal{E}( w + \varepsilon r\otimes s)- \mathcal{E}(w) \geq 0.$$

As a consequence, we have the following by letting $\varepsilon$ go to $0$: $\langle\mathcal{E}'(w),r\otimes s\rangle = 0$. 
This holds for all $(r,s) \in V_t \times V_x$. Hence, for all $z\in \mbox{Span}(\Sigma)$, we also have $\langle\mathcal{E}'(w),z \rangle = 0$, and the density of 
$\mbox{Span}(\Sigma)$ in $V$, which is ensured by assumption $(A1)$, yields
$$\mathcal{E}'(w) =0.$$.

\vspace{0.1cm}

Conversely, let us assume that $ \mathcal{E}'(w) = 0$. Then, as $\mathcal{E}$ is $\alpha$-convex, $w$ is necessarily the global minimizer of $\mathcal{E}$ and, 
in particular, we have for all $(r,s)  \in V_t \times V_x$,
$$\mathcal{E}(w+r\otimes s) \geq \mathcal{E}(w).$$

This concludes the proof.
\end{proof}

Using this lemma, the following result can be proved:

\begin{prop} 
For all $n\in \mathbb{N}^*$, there exists a solution $(r_n, s_n) \in V_t \times V_x$ to the minimization problem (\ref{algorithm}) .

 Moreover, $r_n \otimes s_n \neq 0$ if and only if $u_{n-1} \neq u$, where $u_n$ is defined by (\ref{undef}).

\end{prop}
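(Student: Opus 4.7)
The plan is to handle the two parts of the proposition separately: existence of the minimizer by the direct method adapted to the non-convex functional $J_n(r,s) := \mathcal{E}(u_{n-1} + r\otimes s)$, and the dichotomy by combining Lemma 3.1 with the strong convexity of $\mathcal{E}$.

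For existence, I would pick a minimizing sequence $(r^{(j)},s^{(j)})_{j\in\mathbb{N}}$ for $J_n$ over $V_t\times V_x$. Since $J_n(0,0)=\mathcal{E}(u_{n-1})$, the infimum is finite, and we may assume $J_n(r^{(j)},s^{(j)})\le \mathcal{E}(u_{n-1})+1$ for all $j$. Applying the $\alpha$-convexity inequality $(A3)$ around $u$ gives
\begin{equation*}
\mathcal{E}(u_{n-1}+r^{(j)}\otimes s^{(j)})\ge \mathcal{E}(u)+\tfrac{\alpha}{2}\|u_{n-1}+r^{(j)}\otimes s^{(j)}-u\|_V^2,
\end{equation*}
so the sequence $(r^{(j)}\otimes s^{(j)})_j$ is bounded in $V$. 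Assumption $(A2)$ then yields a subsequence (still indexed by $j$) and a pair $(r_n,s_n)\in V_t\times V_x$ such that $r^{(j)}\otimes s^{(j)} \rightharpoonup r_n\otimes s_n$ weakly in $V$, and hence $u_{n-1}+r^{(j)}\otimes s^{(j)} \rightharpoonup u_{n-1}+r_n\otimes s_n$. Since $\mathcal{E}$ is convex (by $(A3)$) and continuous (as it is differentiable with Lipschitz gradient on bounded sets by $(A4)$), it is weakly lower semicontinuous on $V$, so
\begin{equation*}
\mathcal{E}(u_{n-1}+r_n\otimes s_n)\le \liminf_{j\to\infty}\mathcal{E}(u_{n-1}+r^{(j)}\otimes s^{(j)})=\inf J_n,
\end{equation*}
proving that $(r_n,s_n)$ is a minimizer.

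For the second assertion, I would argue both implications via Lemma 3.1. If $u_{n-1}=u$, then $\mathcal{E}'(u_{n-1})=0$, so by Lemma 3.1 we have $\mathcal{E}(u_{n-1}+r\otimes s)\ge \mathcal{E}(u_{n-1})$ for every $(r,s)$; the uniqueness of the global minimizer of $\mathcal{E}$ (from strong convexity) forces any minimizer $(r_n,s_n)$ to satisfy $u_{n-1}+r_n\otimes s_n = u = u_{n-1}$, i.e.\ $r_n\otimes s_n=0$. Conversely, if $u_{n-1}\ne u$, then $\mathcal{E}'(u_{n-1})\ne 0$ (otherwise $u_{n-1}$ would be the minimizer), so Lemma 3.1 provides some $(r,s)\in V_t\times V_x$ with $\mathcal{E}(u_{n-1}+r\otimes s)<\mathcal{E}(u_{n-1})=J_n(0,0)$; thus $J_n(r_n,s_n)<\mathcal{E}(u_{n-1})$, which rules out $r_n\otimes s_n=0$.

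The main obstacle is the existence part: the map $(r,s)\mapsto r\otimes s$ is bilinear but not linear, so weak limits of elements of $\Sigma$ need not a priori stay in $\Sigma$, and $J_n$ is not convex (as noted in Remark 2.2). Assumption $(A2)$ is precisely what bypasses this, by encoding a weak closedness property of $\Sigma$ that replaces the usual convex direct method; the rest of the argument is then just coercivity from $(A3)$ and weak lower semicontinuity of the convex continuous functional $\mathcal{E}$.
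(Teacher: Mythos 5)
Your proposal is correct and follows essentially the same route as the paper: a minimizing sequence made bounded by the $\alpha$-convexity inequality around $u$, assumption $(A2)$ to keep the weak limit in $\Sigma$, weak lower semicontinuity of the convex continuous $\mathcal{E}$ to pass to the limit, and Lemma 3.1 together with strict convexity (uniqueness of the minimizer) for the equivalence $r_n\otimes s_n\neq 0 \Leftrightarrow u_{n-1}\neq u$. The only cosmetic difference is that for the direction $u_{n-1}=u$ you invoke Lemma 3.1 plus uniqueness of the global minimizer where the paper appeals directly to strict convexity, which is an equivalent argument.
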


\begin{proof}

Firstly, let us prove the existence of a minimizer for problem (\ref{algorithm}). 

Let $n \in \mathbb{N}^*$. For all $(r,s)\in  V_t \times V_x$ ,  $\mathcal{E}(u_{n-1}+r\otimes s)\geq \mathcal{E}(u)$. So 
$m = \mathop{\inf}_{(r,s) \in  V_t \times V_x} \mathcal{E}(u_{n-1}+r\otimes s)$ exists in $\mathbb{R}$.

We then consider a minimizing sequence $(r^{(l)}, s^{(l)})_{l\in\mathbb{N}}\in\left(V_t\times V_x\right)^{\mathbb{N}}$ such that
$$\mathop{\lim}_{l\rightarrow \infty}\mathcal{E}(u_{n-1}+r^{(l)}\otimes s^{(l)}) = m.$$

Using (\ref{alpha}) and the fact that $\mathcal{E}'(u)=0$, we have
$$\mathcal{E}(u_{n-1} + r^{(l)} \otimes s^{(l)}) - \mathcal{E}(u) \geq \frac{\alpha}{2} ||u_{n-1} + r^{(l)}\otimes s^{(l)} -u||_V^2 .$$

Then the sequence $(r^{(l)}\otimes s^{(l)})_{l\in\mathbb{N}}$ is bounded in $V$ because $(\mathcal{E}(u_{n-1}+r^{(l)}\otimes s^{(l)}))_{l\in\mathbb{N}}$ is convergent and consequently bounded.

As assumption $(A2)$ is satisfied, we can then extract a subsequence (which we still denote $(r^{(l)}\otimes s^{(l)})_{l\in\mathbb{N}}$) which weakly converges in $V$ towards an element of $\Sigma$. In other words, 
there exist $r^{\infty} \in V_t$ and $s^{\infty} \in V_x$ such that $(r^{(l)}\otimes s^{(l)})_{l\in\mathbb{N}}$ weakly converges in $V$ towards $r^{\infty} \otimes s^{\infty}$. 

Furthermore, as the functional $\mathcal{E}$ is convex and continuous on $V$,
$$\mathcal{E}(u_{n-1}+ r^{\infty}\otimes s^{\infty}) \leq \mathop{\lim}_{l\rightarrow\infty} \mathcal{E}(u_{n-1} + r^{(l)}\otimes s^{(l)}) = m.$$

Hence  $\mathcal{E}(u_{n-1}+ r^{\infty}\otimes s^{\infty}) = m$ so that $(r^{\infty}, s^{\infty})$ is a minimizer of problem (\ref{algorithm}).

\vspace{0.1cm}

Let us prove now that  $r^{\infty}\otimes s^{\infty}\neq 0$ if and only if $u_{n-1}\neq u$.

If $u_{n-1} = u$, we have $\mathcal{E}(u+r\otimes s) > \mathcal{E}(u)$ for all $(r,s)\in V_t \times V_x$ such that $r\otimes s\neq 0$
 as $\mathcal{E}$ is strictly convex. So a minimizer $r^{\infty}\otimes s^{\infty}$ of problem (\ref{algorithm}) must necessarily satisfy $r^{\infty}\otimes s^{\infty} = 0$.

Conversely, if $u_{n-1} \neq u$, we have $ \mathcal{E}'(u_{n-1}) \neq 0$ and from Lemma 3.1, there exists a pair $(r,s)\in V_t \times V_x$ such that $\mathcal{E}(u_{n-1} + r \otimes s) < \mathcal{E}(u_{n-1})$.
 Hence, $\mathcal{E}(u_{n-1}+r^{\infty}\otimes s^{\infty}) < \mathcal{E}(u_{n-1})$ and $r^{\infty}\otimes s^{\infty}$ cannot be equal to $0$.
\end{proof}

\begin{prop}
For each $n\in\mathbb{N}^*$, a minimizer $(r_n, s_n)$ of problem (\ref{algorithm}) obeys the following Euler equation:
\begin{equation}
\label{EL}
\forall (r,s) \in V_t\times V_x, \; \; \; \left\langle\mathcal{E}'(u_n), r \otimes s_n + r_n \otimes s\right\rangle = 0.
\end{equation}
\end{prop}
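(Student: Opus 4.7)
The plan is to derive the Euler equation by a standard one-parameter variation argument, exploiting the multi-linear (in fact bilinear plus quadratic) structure of the map $(r,s) \mapsto r \otimes s$.

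Fix an arbitrary test pair $(r,s) \in V_t \times V_x$. I would define the scalar function
$$\phi:\mathbb{R}\to\mathbb{R},\qquad \phi(\varepsilon) = \mathcal{E}\bigl(u_{n-1} + (r_n + \varepsilon r)\otimes(s_n + \varepsilon s)\bigr).$$
Since $(r_n,s_n)$ is a global minimizer of the right-hand side of (\ref{algorithm}), $\phi$ attains its minimum at $\varepsilon=0$, so provided $\phi$ is differentiable there we must have $\phi'(0)=0$.

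The next step is to expand the tensor product explicitly:
$$(r_n + \varepsilon r)\otimes(s_n + \varepsilon s) = r_n\otimes s_n + \varepsilon (r\otimes s_n + r_n\otimes s) + \varepsilon^2 \, r\otimes s.$$
Setting $h_1 = r\otimes s_n + r_n\otimes s \in V$ and $h_2 = r\otimes s \in V$, and recalling $u_n = u_{n-1} + r_n\otimes s_n$, this rewrites as
$$\phi(\varepsilon) = \mathcal{E}\bigl(u_n + \varepsilon h_1 + \varepsilon^2 h_2\bigr).$$
The inner map $\varepsilon \mapsto u_n + \varepsilon h_1 + \varepsilon^2 h_2$ is smooth from $\mathbb{R}$ to $V$, and $\mathcal{E}$ is differentiable on $V$ by hypothesis, so the chain rule applies and yields
$$\phi'(0) = \langle \mathcal{E}'(u_n), h_1 \rangle = \langle \mathcal{E}'(u_n),\, r\otimes s_n + r_n\otimes s\rangle.$$
Setting this derivative to zero, and noting that $(r,s)$ was arbitrary in $V_t \times V_x$, yields the Euler equation (\ref{EL}).

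There is really no hard step in this argument: the tensor product is bilinear, so the usual derivation of first-order conditions in a product of Hilbert spaces applies verbatim, with the quadratic remainder $\varepsilon^2 h_2$ producing no contribution at $\varepsilon=0$. The only verification worth stating explicitly is that $h_1, h_2 \in V$ (which is automatic since $\Sigma\subset V$) and that the composed map is differentiable at $0$, which follows from the Fréchet differentiability of $\mathcal{E}$ on $V$.
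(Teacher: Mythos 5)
Your argument is correct and is exactly the ``first-order conditions'' computation that the paper invokes without detailing: the joint perturbation $(r_n+\varepsilon r)\otimes(s_n+\varepsilon s)$, whose expansion produces the linear term $r\otimes s_n + r_n\otimes s$ and a harmless $\varepsilon^2\, r\otimes s$ remainder, is the same variation the authors themselves use later (e.g.\ in the proof of Theorem~5.1). Nothing further is needed.
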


This result is obtained by considering the first-order conditions of the minimization problem (\ref{algorithm}). This will be useful in the proof of convergence.

\subsection{Proof of convergence}

In this subsection, we present the different steps of the proof.

\begin{lem}
The series $\sum_{n=1}^{\infty} \|r_n \otimes s_n\|^2_V$ and the sequence $\left(\mathcal{E}(u_n)\right)_{n\in\mathbb{N}^*}$ are convergent.
\end{lem}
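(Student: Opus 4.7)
The plan is to leverage the three facts that the algorithm is defined as a minimization (so adding a new rank-one term can only decrease $\mathcal{E}$), that $\mathcal{E}$ is $\alpha$-convex, and that the Euler equation from Proposition 3.2 makes $\mathcal{E}'(u_n)$ orthogonal to $r_n\otimes s_n$. Combining these will simultaneously give the monotone convergence of $(\mathcal{E}(u_n))$ and the summability of $\|r_n\otimes s_n\|_V^2$.

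First I would observe that $(\mathcal{E}(u_n))_{n\ge 1}$ is non-increasing and bounded below: since $(r,s)=(0,0)$ is admissible in (\ref{algorithm}), we have $\mathcal{E}(u_n)\le \mathcal{E}(u_{n-1})$, and $\mathcal{E}(u_n)\ge \mathcal{E}(u)$ by definition of $u$. Hence $(\mathcal{E}(u_n))$ converges to some limit $\ell\ge \mathcal{E}(u)$.

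Next, I would apply the strong convexity inequality (\ref{alpha}) at the pair $(v,w)=(u_{n-1},u_n)$ to get
\[
\mathcal{E}(u_{n-1}) \ge \mathcal{E}(u_n) + \langle \mathcal{E}'(u_n), u_{n-1}-u_n\rangle + \frac{\alpha}{2}\|u_{n-1}-u_n\|_V^2.
\]
Since $u_{n-1}-u_n = -r_n\otimes s_n$, the key step is to show that the linear term vanishes. This is exactly what the Euler equation (\ref{EL}) gives: specializing (\ref{EL}) to $(r,s)=(r_n,s_n)$ yields $2\langle \mathcal{E}'(u_n), r_n\otimes s_n\rangle =0$, hence $\langle \mathcal{E}'(u_n), r_n\otimes s_n\rangle=0$. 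Therefore
\[
\mathcal{E}(u_{n-1}) - \mathcal{E}(u_n) \ge \frac{\alpha}{2}\,\|r_n\otimes s_n\|_V^2.
\]

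Finally, I would sum this inequality over $n=1,\dots,N$ (setting $u_0=0$) to obtain
\[
\frac{\alpha}{2}\sum_{n=1}^N \|r_n\otimes s_n\|_V^2 \;\le\; \mathcal{E}(u_0) - \mathcal{E}(u_N) \;\le\; \mathcal{E}(0) - \mathcal{E}(u),
\]
which is uniformly bounded in $N$. Since the summands are nonnegative, the series $\sum_{n\ge 1}\|r_n\otimes s_n\|_V^2$ converges, completing the proof. There is no real obstacle here; the only subtlety is to remember that the Euler equation (\ref{EL}) contains both $r\otimes s_n$ and $r_n\otimes s$, so plugging in $(r,s)=(r_n,s_n)$ gives twice the desired orthogonality relation rather than the relation itself.
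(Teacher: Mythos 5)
Your proof is correct and follows essentially the same route as the paper: monotonicity of $\mathcal{E}(u_n)$ from testing with $r\otimes s=0$, the $\alpha$-convexity inequality between $u_{n-1}$ and $u_n$, and the Euler equation (\ref{EL}) to cancel the linear term, yielding $\mathcal{E}(u_{n-1})-\mathcal{E}(u_n)\ge\frac{\alpha}{2}\|r_n\otimes s_n\|_V^2$ and then summability by telescoping. Your side remark that plugging $(r,s)=(r_n,s_n)$ into (\ref{EL}) gives twice the orthogonality relation is a correct clarification of a detail the paper leaves implicit.
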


\begin{proof}
 
Let us set $E_n = \mathcal{E}(u_n) = \mathcal{E}\left(\sum_{k=1}^n r_k\otimes s_k\right).$

Using (\ref{algorithm}), $E_n \leq \mathcal{E}\left(u_{n-1} + r\otimes s\right)$ for all $(r,s)\in V_t\times V_x$, and in particular, by taking $r\otimes s=0$, 
$(E_n)_{n\in\mathbb{N}^*}$ is a non-increasing sequence. Moreover, it is bounded from below. Indeed, for all $n\in \mathbb{N}^*$, we have $E_n \geq \mathcal{E}(u)$. Thus, it is convergent.

This implies that the sequence defined as $W_n = E_{n-1} - E_n$ is  nonnegative, converges to $0$, and satisfies $\sum_{n=1}^{\infty} W_n < + \infty$.

Besides, the $\alpha$-convexity of $\mathcal{E}$ yields the following inequality: 

$W_n \geq -\langle\mathcal{E}'(u_n), r_n \otimes s_n\rangle + \frac{\alpha}{2} \|r_n\otimes s_n\|_V^2.$

Using the Euler equations (\ref{EL}), $\langle\mathcal{E}'(u_n), r_n\otimes s_n\rangle=0,$
and thus, $W_n \geq \frac{\alpha}{2} \|r_n\otimes s_n\|_V^2 $. Hence the result.
\end{proof}

\begin{lem}
 
The sequence $\left(u_n\right)_{n\in\mathbb{N}^*}$ is bounded in $V$.

\end{lem}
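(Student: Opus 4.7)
The plan is to bound $\|u_n - u\|_V$ using the $\alpha$-convexity of $\mathcal{E}$ together with the fact that $u$ is the global minimizer (so $\mathcal{E}'(u) = 0$) and the monotonicity of the energy already proved in Lemma~3.2.

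More concretely, the first step is to apply inequality~(\ref{alpha}) from assumption $(A3)$ with $v = u_n$ and $w = u$. Since $\mathcal{E}'(u) = 0$, this immediately yields
$$
\mathcal{E}(u_n) \geq \mathcal{E}(u) + \frac{\alpha}{2}\|u_n - u\|_V^2,
$$
so
$$
\|u_n - u\|_V^2 \leq \frac{2}{\alpha}\bigl(\mathcal{E}(u_n) - \mathcal{E}(u)\bigr).
$$
The second step is to observe that the sequence $\bigl(\mathcal{E}(u_n)\bigr)_{n\in\mathbb{N}^*}$ is non-increasing by Lemma~3.2 (this was already used in that lemma's proof: for any $(r,s)\in V_t\times V_x$, and in particular for $r\otimes s = 0$, we have $\mathcal{E}(u_n)\leq \mathcal{E}(u_{n-1}+r\otimes s) = \mathcal{E}(u_{n-1})$). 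Hence $\mathcal{E}(u_n)\leq \mathcal{E}(u_1)$ for every $n\geq 1$, and therefore
$$
\|u_n - u\|_V^2 \leq \frac{2}{\alpha}\bigl(\mathcal{E}(u_1) - \mathcal{E}(u)\bigr) =: C_0 < \infty.
$$
The final step is just the triangle inequality: $\|u_n\|_V \leq \|u\|_V + \sqrt{C_0}$, which gives the uniform bound.

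I do not expect any real obstacle here; the argument is routine once one notes that strong convexity plus the vanishing of $\mathcal{E}'(u)$ turns the energy bound of Lemma~3.2 into an $L^2$-type bound on the distance to the minimizer. The key point is simply that one does not need $u_n$ to converge in order to control $\|u_n\|_V$—a one-sided energy bound is enough thanks to $(A3)$.
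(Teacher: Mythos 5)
Your proof is correct and takes essentially the same route as the paper: both apply the $\alpha$-convexity inequality (\ref{alpha}) at $w=u$ with $\mathcal{E}'(u)=0$ and then bound $\mathcal{E}(u_n)$ from above by a fixed energy value using the monotone decrease of the energy. The only cosmetic difference is that the paper uses $\mathcal{E}(0)$ (the energy of the empty sum $u_0=0$) as the upper bound instead of your $\mathcal{E}(u_1)$.
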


\begin{proof}
 
By $\alpha$-convexity of the functional $\mathcal{E}$, we have
\begin{eqnarray*}
\mathcal{E}(0) & \geq & \mathcal{E}(u_n) \geq \mathcal{E}(u) + \langle\mathcal{E}'(u) , u_n -u\rangle\\
& & + \frac{\alpha}{2}\|u-u_n\|_V^2.\\
\end{eqnarray*}
Thus $\|u-u_n\|_V^2 \leq \frac{2}{\alpha}(\mathcal{E}(0) - \mathcal{E}(u)).$

Therefore, the sequence $(u_n)_{n\in\mathbb{N}^*}$ is bounded in $V$.
\end{proof} 

The following estimate is essential for the proof of convergence.

\begin{prop}
 
There exists a constant $A\in\mathbb{R}_+$ such that, for all $n\in\mathbb{N}^*$ and all $(r,s)\in V_t\times V_x$,
\begin{equation}
\label{estimate}
\left| \left\langle \mathcal{E}'(u_{n-1}), r\otimes s\right\rangle \right| \leq A\| r_n\otimes s_n\|_V \|r\otimes s\|_V.
\end{equation}

\end{prop}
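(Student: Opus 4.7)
My plan is to exploit the minimality of $(r_n,s_n)$ by chaining two opposite bounds for $\mathcal{E}$ around $u_{n-1}$: the $\alpha$-convex lower bound on $\mathcal{E}(u_n)$ coming from $(A3)$ and the quadratic upper bound on $\mathcal{E}(u_{n-1}+\lambda r\otimes s)$ coming from $(A4)$ (the usual ``descent lemma''), and then to optimize over $\lambda\in\R$.

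By Lemma 3.3 the sequence $(u_n)_{n\in\N^*}$ is bounded, so I fix a bounded set $K\subset V$ containing all the $u_n$'s and large enough to absorb the segments arising below; set $L:=L_K$. Applying $\alpha$-convexity between $u_{n-1}$ and $u_n=u_{n-1}+r_n\otimes s_n$ and the descent lemma between $u_{n-1}$ and $u_{n-1}+\lambda r\otimes s$, together with the minimality relation $\mathcal{E}(u_n)\leq\mathcal{E}(u_{n-1}+\lambda r\otimes s)$, one obtains for every $\lambda\in\R$
$$\langle\mathcal{E}'(u_{n-1}),r_n\otimes s_n\rangle+\frac{\alpha}{2}\|r_n\otimes s_n\|_V^2\leq\lambda\langle\mathcal{E}'(u_{n-1}),r\otimes s\rangle+\frac{L}{2}\lambda^2\|r\otimes s\|_V^2.$$
Minimizing the right-hand side over $\lambda\in\R$, at $\lambda^*=-\langle\mathcal{E}'(u_{n-1}),r\otimes s\rangle/(L\|r\otimes s\|_V^2)$, yields
$$\frac{\langle\mathcal{E}'(u_{n-1}),r\otimes s\rangle^2}{2L\|r\otimes s\|_V^2}\leq-\langle\mathcal{E}'(u_{n-1}),r_n\otimes s_n\rangle-\frac{\alpha}{2}\|r_n\otimes s_n\|_V^2.$$

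To finish, I would control the right-hand side by $L\|r_n\otimes s_n\|_V^2$. The Euler equation (Proposition 3.2) applied at $(r,s)=(r_n,s_n)$ gives $\langle\mathcal{E}'(u_n),r_n\otimes s_n\rangle=0$, so $\langle\mathcal{E}'(u_{n-1}),r_n\otimes s_n\rangle=\langle\mathcal{E}'(u_{n-1})-\mathcal{E}'(u_n),r_n\otimes s_n\rangle$, and $(A4)$ bounds this by $L\|u_n-u_{n-1}\|_V\|r_n\otimes s_n\|_V=L\|r_n\otimes s_n\|_V^2$. Substituting and taking the square root delivers the estimate with $A=\sqrt{2}\,L$.

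The main obstacle is the self-referential choice of $K$: the descent lemma requires the segment $[u_{n-1},u_{n-1}+\lambda^*(r\otimes s)]$ to lie in $K$, yet $\lambda^*$ involves $L=L_K$. I would dispose of this by observing that $\|\lambda^*(r\otimes s)\|_V\leq\|\mathcal{E}'(u_{n-1})\|_V/L$, and that $\|\mathcal{E}'(u_{n-1})\|_V$ is uniformly bounded (since $\mathcal{E}'$ is Lipschitz, hence continuous, on the bounded set containing $(u_n)$); choosing $K$ as a sufficiently large fixed ball then makes the construction self-consistent.
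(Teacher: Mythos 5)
Your argument is correct, and it takes a genuinely different (and arguably cleaner) route than the paper. The paper also starts from the minimality of $(r_n,s_n)$ and the Euler equation, but it stays at the level of gradients: it uses plain convexity to get $\langle \mathcal{E}'(u_{n-1}+r\otimes s), r\otimes s\rangle \geq \langle \mathcal{E}'(u_{n-1}+r\otimes s), r_n\otimes s_n\rangle$, shuffles terms with the Lipschitz bound (A4), obtains $|\langle\mathcal{E}'(u_{n-1}), r\otimes s\rangle| \leq L\left(\|r\otimes s\|_V^2 + \|r\otimes s\|_V\|r_n\otimes s_n\|_V + \|r_n\otimes s_n\|_V^2\right)$ for $\|r\otimes s\|_V \leq \max(1,\|r_n\otimes s_n\|_V)$, and then exploits homogeneity by rescaling the trial tensor with $t = \|r_n\otimes s_n\|_V$, ending with $A = 3L$; the restriction on $\|r\otimes s\|_V$ is precisely what lets it fix the ball $K = \overline{B}(0,M+2N+3)$ once and for all, with no self-reference. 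You instead work at the level of energies, chaining the $\alpha$-convex lower bound, the minimality $\mathcal{E}(u_n)\leq\mathcal{E}(u_{n-1}+\lambda r\otimes s)$, and the descent-lemma upper bound, then optimize the quadratic in $\lambda$; your scalar $\lambda^*$ plays the role of the paper's rescaling, and you get the slightly better constant $A=\sqrt{2}L$ while also recording the extra information $-\langle\mathcal{E}'(u_{n-1}),r_n\otimes s_n\rangle\geq\frac{\alpha}{2}\|r_n\otimes s_n\|_V^2$ (which the paper derives separately in Lemma 3.2). The one point you should make explicit is why your self-consistency fix closes: the bound $\|\lambda^*(r\otimes s)\|_V\leq\|\mathcal{E}'(u_{n-1})\|_V/L$ still involves $L=L_K$, so you need $L$ bounded away from $0$ uniformly in the choice of $K$; this is immediate either by replacing $L_K$ with $\max(L_K,1)$ (any larger constant is still a valid Lipschitz constant) or by noting that strong convexity forces $L_K\geq\alpha$, after which $\|\lambda^*(r\otimes s)\|_V$ is bounded by a constant independent of $K$ (e.g.\ $L_{K_0}\|u_{n-1}-u\|_V/\alpha$ with $K_0$ a fixed ball containing the $u_n$'s and $u$, since $\mathcal{E}'(u)=0$), and a fixed ball of that radius plus $M$ works.
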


\begin{proof}
Let $M\in\mathbb{R}_+$ be such that for all $n\in\mathbb{N}^*$, $\|u_n\|_V \leq M$. Its existence is ensured by Lemma~3.3. Let $N\in\mathbb{R}_+$ be such that for all $n\in\mathbb{N}^*$, $\|r_n\otimes s_n\|_V \leq N$. 
Let $K=\overline{B}(0, M+2N+3)$ be the closed ball of~$V$ centered at $0$ 
of radius $M+2N+3$. Let $L$ be the Lipschitz constant associated with~$K$ in (\ref{Lipschitz}).

For all $(r,s)\in V_t\times V_x$, we have $\mathcal{E}(u_{n-1}+r\otimes s) - \mathcal{E}(u_{n-1}+r_n \otimes s_n) \geq 0.$

Then, by the convexity of $\mathcal{E}$, we have the following inequality
$$\left\langle\mathcal{E}' (u_{n-1}+r\otimes s), r_n\otimes s_n - r\otimes s\right\rangle \leq  \mathcal{E}(u_{n-1}+r_n\otimes s_n) -\mathcal{E}(u_{n-1}+r\otimes s) \leq 0,$$
which leads to
\begin{equation}
\label{ineg}
\langle\mathcal{E}' (u_{n-1}+r\otimes s),  r\otimes s\rangle \geq \langle\mathcal{E}' (u_{n-1}+r\otimes s),  r_n\otimes s_n\rangle.
\end{equation}

Let $(r,s)\in V_t\times V_x$ such that $\|r\otimes s \|_V \leq \mbox{max}\left(1, \|r_n\otimes s_n\|_V\right)$. We then have, by using (\ref{Lipschitz}) and (\ref{ineg}),
\begin{eqnarray*}
-\langle \mathcal{E}'(u_{n-1}), r\otimes s \rangle & = & -\langle \mathcal{E}'(u_{n-1}), r\otimes s \rangle + \langle \mathcal{E}'(u_{n-1}+ r\otimes s), r\otimes s \rangle\\
& & - \langle \mathcal{E}'(u_{n-1}+ r\otimes s), r\otimes s \rangle \\
& \leq &  L \|r\otimes s \|_V^2 - \langle \mathcal{E}'(u_{n-1}+ r\otimes s), r\otimes s \rangle \\
& = &  L \|r\otimes s \|_V^2 - \langle \mathcal{E}'(u_{n-1}+ r\otimes s), r\otimes s \rangle + \langle \mathcal{E}'(u_{n-1}+ r\otimes s), r_n\otimes s_n \rangle \\
&& - \langle \mathcal{E}'(u_{n-1}+ r\otimes s), r_n\otimes s_n \rangle \\
& \leq & L \|r\otimes s \|_V^2 - \langle \mathcal{E}'(u_{n-1}+ r\otimes s), r_n\otimes s_n \rangle \\
& = & L \|r\otimes s \|_V^2 - \langle \mathcal{E}'(u_{n-1}+ r\otimes s), r_n\otimes s_n \rangle + \langle \mathcal{E}'(u_{n-1}+ r_n\otimes s_n), r_n\otimes s_n \rangle \\
&& - \langle \mathcal{E}'(u_{n-1}+ r_n\otimes s_n), r_n\otimes s_n \rangle \\
& \leq & L \|r\otimes s \|_V^2 +L \|r\otimes s - r_n \otimes s_n\|_V \|r_n\otimes s_n\|_V \\
& & - \langle \mathcal{E}'(u_{n-1}+ r_n\otimes s_n), r_n\otimes s_n \rangle \\
& = & L \|r\otimes s \|_V^2 +L \|r\otimes s - r_n \otimes s_n\|_V \|r_n\otimes s_n\|_V .\\
\end{eqnarray*}

The last line has been obtained by taking into account the fact that $\langle \mathcal{E}'(u_{n-1}+ r_n\otimes s_n), r_n\otimes s_n \rangle = 0$ because of the Euler equation
 (\ref{EL}).

Thus, for all $(r,s) \in V_t\times V_x$ such that $\|r\otimes s\|_V \leq \mbox{max}\left(1, \|r_n \otimes s_n\|_V\right)$, 
$$\langle\mathcal{E}'(u_{n-1}), r\otimes s\rangle + L\|r\otimes s\|_V^2 + L\|r\otimes s\|_V\|r_n\otimes s_n\|_V + L\|r_n\otimes s_n\|_V^2 \geq 0 .$$

As a consequence,
$$|\langle\mathcal{E}'(u_{n-1}), r\otimes s\rangle| \leq L\|r\otimes s\|_V^2 + L\|r\otimes s\|_V\|r_n\otimes s_n\|_V +L \|r_n\otimes s_n\|_V^2.$$

Let $(r,s) \in V_t \times V_x$ such that $\| r \otimes s \|_V = 1$ and $t\in\mathbb{R}$ such that $t\leq \mbox{max}\left(1, \|r_n\otimes s_n\|_V\right)$.
 Then, we have
$$|\langle\mathcal{E}'(u_{n-1}), t r\otimes s\rangle| \leq Lt^2\|r\otimes s\|_V^2 + Lt\|r\otimes s\|_V\|r_n\otimes s_n\|_V +L \|r_n\otimes s_n\|_V^2.$$

And, by setting $t=\| r_n \otimes  s_n\|_V$, we obtain the following inequality for all $(r,s)\in V_t \times V_x$ such that $\|r\otimes s\|_V = 1$,
$$|\langle\mathcal{E}'(u_{n-1}), r\otimes s\rangle| \leq 3L \|r_n\otimes s_n\|_V\|r\otimes s\|_V.$$

Of course, this inequality also holds true for all $(r,s)\in V_t\times V_x$ such that $\|r\otimes s\|_V \neq 1$. Therefore, (\ref{estimate}) holds with $A=3L$.
\end{proof}

We now state an elementary result which will be useful in the sequel.

\begin{lem}
Let $(a_n)_{n\in\mathbb{N}^*}$ be a sommable sequence of $\mathbb{R}_+$. Then, there exists a subsequence of $(na_n)_{n\in\mathbb{N}^*}$ which converges to $0$.  
\end{lem}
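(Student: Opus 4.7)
The plan is to argue by contradiction, reducing the claim to a standard comparison with the harmonic series. Since $na_n \geq 0$, the statement that no subsequence of $(na_n)_{n\in\mathbb{N}^*}$ converges to $0$ is equivalent to $\liminf_{n\to\infty} na_n > 0$. I would take this as the contradiction hypothesis.

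First I would write: suppose no subsequence of $(na_n)$ tends to $0$. Then there exist $\varepsilon > 0$ and $N \in \mathbb{N}^*$ such that for every $n \geq N$ one has $na_n \geq \varepsilon$, equivalently $a_n \geq \varepsilon/n$. Summing from $N$ to any $M \geq N$ gives $\sum_{n=N}^{M} a_n \geq \varepsilon \sum_{n=N}^{M} 1/n$, and letting $M \to \infty$ the right-hand side diverges (harmonic series), which contradicts the assumed summability $\sum_{n=1}^{\infty} a_n < +\infty$. Hence $\liminf_{n\to\infty} na_n = 0$, and by definition of liminf one can extract a subsequence $(n_k a_{n_k})$ converging to $0$.

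There is essentially no obstacle here: the argument is a one-line contradiction using the divergence of $\sum 1/n$. The only care needed is to phrase the negation of ``some subsequence of $(na_n)$ converges to $0$'' correctly as ``$\liminf na_n > 0$'', which is valid precisely because the sequence is nonnegative (for a nonnegative real sequence $(b_n)$, a subsequence tending to $0$ exists iff $\liminf b_n = 0$). I would state this equivalence explicitly at the start of the proof for clarity, then deliver the three-line harmonic-series contradiction.
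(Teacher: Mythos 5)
Your proposal is correct and is essentially the paper's own argument: the paper negates the existence of such a subsequence into $\exists \varepsilon_0>0,\ \exists n_0,\ \forall n\geq n_0,\ na_n\geq \varepsilon_0$ and concludes that $\sum_n a_n$ would then diverge, which is exactly your harmonic-series comparison. Your additional remarks (the $\liminf$ reformulation and the explicit bound $a_n\geq \varepsilon/n$) only spell out details the paper leaves implicit.
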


\begin{proof}
If such a subsequence could not be extracted, it would imply
$$\exists \varepsilon_0 >0 \; , \; \exists n_0 \in \mathbb{N}^* \; , \; \forall n \geq n_0\; , \; na_n \geq  \varepsilon_0.$$

Thus, the series $\sum_{n=1}^{\infty} a_n$ would diverge. Hence the contradiction.
\end{proof}

We are now in position to complete the proof of Theorem 2.1.

\begin{proof}

By Lemma 3.2, the sequence $(\mathcal{E}(u_n))_{n\in\mathbb{N}^*}$ is convergent. Let us denot its limit by $E$. We want to prove that $E=\mathcal{E}(u)$.

Firstly, for all $n\in \mathbb{N}^*$, $\mathcal{E}(u_n) \geq \mathcal{E}(u)$,
 since $u$ is the global minimizer of the functional $\mathcal{E}$. By letting $n$ go to infinity, we obtain $E \geq \mathcal{E}(u).$

It remains to prove that $E\leq \mathcal{E}(u)$.

Let us first prove that $\left(\mathcal{E}'(u_n)\right)_{n\in\mathbb{N}^*}$ weakly converges to $0$ in $V$. Let $M\in\mathbb{R}_+$ such that for all $n\in\mathbb{N}^*$, $\|u_n\|_V \leq M$.
 Its existence is ensured by Lemma~3.3. 
Let $K=\overline{B}(0, M+2+\|u\|_V)$ be the closed ball of~$V$ centered at $0$ of radius $M+2+\|u\|_V$. Let $L$ be the Lipschitz constant associated with~$K$ in (\ref{Lipschitz}). 
Using (\ref{Lipschitz}) and the fact that $\mathcal{E}'(u) = 0$, we have
 $\|\mathcal{E}'(u_n)\|_V\leq  L\|u-u_n\|_V$
 and as $(u_n)_{n\in\mathbb{N}^*}$ is bounded in $V$ by Lemma 3.3, we deduce 
that $(\mathcal{E}'(u_n))_{n\in\mathbb{N}^*}$ is also bounded in $V$. We can then extract a subsequence of $(\mathcal{E}'(u_n))_{n\in\mathbb{N}^*}$ which 
weakly converges in $V$ towards $w\in V$. 
By using Proposition 3.3 and by letting $n$ go to infinity in (\ref{estimate}), we deduce that $\langle w, r\otimes s\rangle = 0$ for all $(r,s)\in V_t\times V_x$.
 Then, as $\mbox{Span}(\Sigma)$ is dense in $V$
 with assumption $(A1)$, necessarily $w=0$. Thus the sequence $(\mathcal{E}'(u_n))_{n\in\mathbb{N}^*}$ weakly converges to $0$ in $V$.

As $\mathcal{E}$ is convex, we have the following inequality for all $n\in\mathbb{N}^*$,
\begin{equation}
\label{fin}
\mathcal{E}(u_n) \leq \mathcal{E}(u) + \langle\mathcal{E}'(u_n), u_n -u \rangle_V.
\end{equation}

Let us prove that we can extract a subsequence of $(\langle\mathcal{E}'(u_n), u_n\rangle)_{n\in\mathbb{N}^*}$ which converges to $0$. Let $n\in \mathbb{N}^*$.
By using Proposition 3.3,
\begin{eqnarray*}
|\langle\mathcal{E}'(u_n), u_n\rangle| & \leq &  \sum_{k=1}^n |\langle\mathcal{E}'(u_n), r_k\otimes s_k\rangle|,\\
  & \leq & A \sum_{k=1}^n \|r_{n+1}\otimes s_{n+1}\|_V  \|r_k\otimes s_k\|_V ,\\
 & \leq & A(n \|r_{n+1}\otimes s_{n+1}\|_V^2)^{1/2} \left(\sum_{k=1}^{n} \|r_k\otimes s_k\|_V^2\right)^{1/2}.\\
\end{eqnarray*}

As the sequence $\left(\sum_{k=1}^n \|r_k\otimes s_k\|_V^2\right)_{n\in\mathbb{N}^*}$ converges by Lemma 3.2, we have $\sum_{k=1}^{n} \|r_k\otimes s_k\|_V^2 \leq \sum_{k=1}^{\infty} \|r_k\otimes s_k\|_V^2 < \infty$. 
Furthermore, we can also extract a subsequence from $(n\|r_{n+1}\otimes s_{n+1} \|_V^2)_{n\in\mathbb{N}^*}$ which converges to $0$ (see Lemma 3.4). 

We can then extract a subsequence from $(\langle \mathcal{E}'(u_n), u_n\rangle_V)_{n\in\mathbb{N}^*}$ which converges to $0$.

By letting $n$ go to infinity in (\ref{fin}) with this subsequence, we obtain that $E\leq \mathcal{E}(u)$.

We have thus proved that $E = \mathcal{E}(u)$.

\vspace{0.5cm}

Besides, as the functional $\mathcal{E}$ is $\alpha$-convex, (\ref{alpha}) yields the following inequality,
$$\frac{\alpha}{2} \|u-u_n\|_V^2 \leq \mathcal{E}(u_n) - \mathcal{E}(u),$$
which necessarily implies that $\|u-u_n\|_V$ converges to $0$ when $n$ goes to infinity, which proves that $(u_n)_{n\in\mathbb{N}^*}$ strongly converges towards $u$ in $V$.
\end{proof} 

\section{Rate of convergence in the finite-dimensional case}

In the case when $V_t$ and $V_x$ are finite-dimensional, we are able to prove that the algorithm converges exponentially fast.  

\begin{thm}
 We assume that $V_t$ and $V_x$ are finite-dimensional and that assumptions $(A1)$, $(A2)$, $(A3)$ and $(A4)$ are fulfilled.
 Then there exist two constants $\tau>0$ and $\sigma \in (0,1)$ such that for all $n\in\mathbb{N}^*$,
\begin{equation}
\label{exponential}
0\leq \mathcal{E}(u_n) -\mathcal{E}(u) \leq \tau \sigma^n,
\end{equation}
and
\begin{equation}
 \label{vector}
\|u-u_n\|_V \leq \sqrt{\frac{2\tau}{ \alpha}} \sigma^{n/2}.
\end{equation}\label{1}

\end{thm}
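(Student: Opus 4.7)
The plan is to combine the key estimate from Proposition 3.3 with a finite-dimensional equivalence-of-norms argument, and then run a one-step recursion on the energy error.

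First, I would exploit finite-dimensionality to upgrade Proposition 3.3. Define, for $w\in V$,
\[
\phi(w) \;=\; \sup_{(r,s)\in V_t\times V_x,\; \|r\otimes s\|_V=1} \big|\langle w, r\otimes s\rangle\big|.
\]
This is a seminorm on $V$; by assumption $(A1)$, $\mathrm{Span}(\Sigma)$ is dense in $V$, hence (since $V$ is finite-dimensional) equal to $V$, so $\phi(w)=0$ forces $w=0$ and $\phi$ is actually a norm. Since all norms on the finite-dimensional space $V$ are equivalent, there exists $\gamma>0$ such that $\gamma\|w\|_V \le \phi(w)$ for every $w\in V$. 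Applying Proposition~3.3 to $w=\mathcal{E}'(u_{n-1})$, and noting that the Lipschitz constant $A$ there can be chosen uniform in $n$ because $(u_n)$ and $(r_n\otimes s_n)$ are bounded (Lemmas 3.2 and 3.3), I obtain
\[
\gamma\,\|\mathcal{E}'(u_{n-1})\|_V \;\le\; \phi\bigl(\mathcal{E}'(u_{n-1})\bigr) \;\le\; A\,\|r_n\otimes s_n\|_V.
\]

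Next I convert this into a contraction on the energy error $e_n := \mathcal{E}(u_n)-\mathcal{E}(u)\ge 0$. The standard $\alpha$-convex upper bound (obtained by minimizing the right-hand side of $(A3)$ in $v-w$ with $w=u_{n-1}$ and using $\mathcal{E}'(u)=0$) yields
\[
e_{n-1} \;\le\; \frac{1}{2\alpha}\,\|\mathcal{E}'(u_{n-1})\|_V^2 \;\le\; \frac{A^2}{2\alpha\gamma^2}\,\|r_n\otimes s_n\|_V^2.
\]
On the other hand, Lemma~3.2 (together with the Euler equation (\ref{EL})) gives $e_{n-1}-e_n \ge \tfrac{\alpha}{2}\|r_n\otimes s_n\|_V^2$. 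Combining the two inequalities yields
\[
e_{n-1} \;\le\; C\,(e_{n-1}-e_n), \qquad C := \frac{A^2}{\alpha^2\gamma^2}.
\]
Rearranging gives $e_n \le \sigma\, e_{n-1}$ with $\sigma := \max(0, 1 - 1/C)\in[0,1)$, and therefore $e_n \le \sigma^n e_0$. Setting $\tau := e_0 = \mathcal{E}(0)-\mathcal{E}(u)$ proves (\ref{exponential}). The bound (\ref{vector}) then follows at once from $(A3)$ applied between $u_n$ and $u$ (with $\mathcal{E}'(u)=0$), namely $\tfrac{\alpha}{2}\|u-u_n\|_V^2 \le e_n \le \tau\sigma^n$.

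The main obstacle is the equivalence-of-norms step: this is exactly where the finite-dimensional hypothesis is used, since in infinite dimensions the seminorm $\phi$ need not dominate $\|\cdot\|_V$, and the recursion $e_{n-1}\le C(e_{n-1}-e_n)$ would fail to close. The rest of the argument reduces to routine $\alpha$-convex/Lipschitz manipulations based on results already established in Section~3.
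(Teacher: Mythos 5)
Your proof is correct, and its skeleton is the same as the paper's: both upgrade the rank-one estimate of Proposition~3.3 into a bound of the form $\|\mathcal{E}'(u_{n-1})\|_V \le c\,\|r_n\otimes s_n\|_V$ by exploiting finite-dimensionality, and then close a one-step contraction on $e_n=\mathcal{E}(u_n)-\mathcal{E}(u)$ using the per-iteration decrease $e_{n-1}-e_n\ge \frac{\alpha}{2}\|r_n\otimes s_n\|_V^2$ coming from $\alpha$-convexity and the Euler equation (\ref{EL}). The two intermediate steps are where you genuinely diverge. For the gradient bound, the paper works in coordinates: it reduces to Frobenius norms and expands $\mathcal{E}'(u_n)$ in the orthonormal rank-one basis $(\phi_i\otimes\psi_j)$, which gives (\ref{normeprime}) with the explicit constant $\sqrt{lm}A$; you instead introduce the rank-one sup seminorm $\phi$ and invoke equivalence of norms, which is shorter but non-constructive (note also that definiteness of $\phi$ already follows from the density assumption $(A1)$; what finite dimension buys is the lower bound $\gamma\|\cdot\|_V\le\phi$, and it is also what makes $V$ itself finite-dimensional, since $\mathrm{Span}(\Sigma)$ has dimension at most $lm$ and is therefore closed --- a point you use implicitly, just as the paper does). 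For the closure, the paper passes through the intermediate inequality (\ref{kappa}), $\|r_n\otimes s_n\|_V\ge\kappa\|u-u_{n-1}\|_V$, combined with the smoothness bound (\ref{un2}), $e_{n-1}\le L\|u-u_{n-1}\|_V^2$; you bypass both via the gradient-domination (Polyak--\L{}ojasiewicz) consequence of strong convexity, $e_{n-1}\le\frac{1}{2\alpha}\|\mathcal{E}'(u_{n-1})\|_V^2$, so your contraction factor $1-\alpha^2\gamma^2/A^2$ involves the Lipschitz constant only through $A$. Both routes are valid and both extend to more than two tensor factors; the paper's yields explicit constants in terms of $l$, $m$, $\alpha$, $L$, while yours is leaner. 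The remaining trivialities (replacing $\sigma=0$ or $\tau=0$ by positive values, and deducing (\ref{vector}) from $\alpha$-convexity with $\mathcal{E}'(u)=0$) are handled exactly as in the paper.
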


\begin{proof}
Let us denote by $l = \mbox{dim} V_t$ and $m = \mbox{dim}V_x$. Then we can consider that $V_t = \mathbb{R}^l$, $V_x = \mathbb{R}^m$ and $V = \mathbb{R}^{l\times m}$ (which is implied by $(A1)$).

As the spaces are finite-dimensional, all the norms are equivalent, and we can consider without loss of generality that $\|.\|_{V_t}$, $\|.\|_{V_x}$ and $\|.\|_V$ are equal 
to the Frobenius norms of $\mathbb{R}^l$, $\mathbb{R}^m$ and $\mathbb{R}^{l\times m}$ defined by:
\begin{equation}
\label{normkl}
 \begin{array}{ccc}
  \|R\|_l^2 &= &R^T R,\\[5pt]
\|S\|_m^2 & = & S^T S,\\ [5pt]
\|U\|_{lm}^2 & =&  \mbox{Tr}(U^T U).\\
 \end{array}
\end{equation}
Notice that for all $(R,S)\in\mathbb{R}^l \times \mathbb{R}^m$, 
$$\|R\otimes S \|_V  = \|RS^T\|_{lm} = \|R\|_l\|S\|_m.$$

Let $(\phi_i)_{1\leq i \leq l}$ and $(\psi_j)_{1\leq j\leq m}$ be orthonormal bases of $V_t$ and $V_x$ respectively. Then, $(\phi_i\otimes \psi_j)_{1\leq i \leq l, 1\leq j\leq m}$
 forms an orthonormal basis of $V$.

Our goal is to prove that there exists a constant $\sigma \in (0,1)$ such that for all $n\in\mathbb{N}^*$,
\begin{equation}
\label{finalin}
 \mathcal{E}(u_n) - \mathcal{E}(u) \leq \sigma \left( \mathcal{E}(u_{n-1}) - \mathcal{E}(u) \right).
\end{equation}

Let $n\in\mathbb{N}^*$. Let us notice that
\begin{equation}
\label{ratebeg}
 \mathcal{E}(u_n) - \mathcal{E}(u) = \mathcal{E}(u_n) - \mathcal{E}(u_{n-1}) + \mathcal{E}(u_{n-1}) - \mathcal{E}(u).
\end{equation}

As for all $n\in\mathbb{N}^*$, $\mathcal{E}(u_n) - \mathcal{E}(u)\geq 0$, it is then sufficient with (\ref{ratebeg}) to prove that there exists $\lambda \in (0,1)$ such that
\begin{equation}
\label{suite}
\mathcal{E}(u_n) - \mathcal{E}(u_{n-1}) \leq - \lambda \left( \mathcal{E}(u_{n-1}) - \mathcal{E}(u) \right),
\end{equation}
to have (\ref{finalin}) with $\sigma = 1 -\lambda \in (0,1)$.

\vspace{0.5cm}

Let us notice that (\ref{alpha}) and (\ref{EL}) yield
\begin{equation}
\label{init}
 \mathcal{E}(u_n) - \mathcal{E}(u_{n-1}) \leq -\frac{\alpha}{2}\|r_n\otimes s_n\|_V^2.
\end{equation}

Besides, let $M \in \mathbb{R}_+$ such that for all $n\in\mathbb{N}^*$, $\|u_n\|_V \leq M$. Its existence is ensured by Lemma~3.3. Let $K=\overline{B}(0, M+\|u\|_V+2)$ be the closed ball of $V$ centered at $0$ 
of radius $M+\|u\|_V+2$. Let $L$ be the Lipschitz constant of the gradient of $\mathcal{E}$ associated to $K$
 in (\ref{Lipschitz}).

Using (\ref{Lipschitz}) and the fact that $\mathcal{E}'(u) = 0$, we also have,
\begin{equation}
 \label{un2}
 \mathcal{E}(u_{n-1}) - \mathcal{E}(u) \leq L\|u-u_{n-1}\|_V^2.
\end{equation}

With (\ref{init}) and (\ref{un2}), it is sufficient to prove that there exists a constant $\kappa\in(0,1)$ such
 that for all $n\in\mathbb{N}^*$, 
\begin{equation}
\label{kappa}
 \|r_n\otimes s_n\|_V \geq \kappa \|u - u_{n-1}\|_V,
\end{equation}
in order to have (\ref{suite}) and hence (\ref{finalin}).

Indeed, if (\ref{kappa}) holds, we then have, using (\ref{init}), (\ref{kappa}) and (\ref{un2}),
\begin{eqnarray*}
  \mathcal{E}(u_n) - \mathcal{E}(u_{n-1}) &\leq &-\frac{\alpha}{2}\|r_n\otimes s_n\|_V^2\\
& \leq & - \frac{\alpha}{2} \kappa^2 \|u - u_{n-1}\|_V^2 \\
& \leq & -\frac{\alpha}{2L} \kappa^2 \left( \mathcal{E}(u_{n-1}) - \mathcal{E}(u) \right).\\
\end{eqnarray*}

As the $\alpha$-convexity of $\mathcal{E}$ and the fact that $\mathcal{E}'(u) = 0$ yields
\begin{equation}
\label{un1}
 \mathcal{E}(u_{n-1}) - \mathcal{E}(u) \geq \frac{\alpha}{2}\|u-u_{n-1}\|_V^2,
\end{equation}
inequalities (\ref{un1}) and (\ref{un2}) then imply that $\frac{\alpha}{2} \leq L$ and then (\ref{suite}) holds with $\lambda = \frac{\alpha}{2L} \kappa^2 \in (0,1)$.

\vspace{0.5cm}

Let us prove inequality (\ref{kappa}). 
From Proposition~3.3, estimate (\ref{estimate}) holds true. As 
$(\phi_i\otimes \psi_j)_{1\leq i \leq l, 1\leq j\leq m}$ forms an orthonormal
 basis of $V$, we obtain, using (\ref{estimate}),
\begin{eqnarray*}
\|\mathcal{E}'(u_{n})\|_V^2 & = & \sum_{i=1}^l \sum_{j=1}^m \langle \mathcal{E}'(u_{n}) , \phi_i\otimes \psi_j\rangle^2 \\
& \leq & \sum_{i=1}^l \sum_{j=1}^m A^2 \|r_{n+1}\otimes s_{n+1}\|_V^2 \|\phi_i\otimes \psi_j\|_V^2 \\
&  = &  lmA^2 \|r_{n+1} \otimes s_{n+1}\|_V^2.
\end{eqnarray*}

We then have the following estimate:
\begin{equation}
 \label{normeprime}
\|\mathcal{E}'(u_n)\|_V \leq \sqrt{lm}A\|r_{n+1} \otimes s_{n+1}\|_V.
\end{equation}

The $\alpha$-convexity of $\mathcal{E}$ and estimate (\ref{normeprime}) lead to
\begin{eqnarray*}
 \mathcal{E}(u_{n-1}) - \mathcal{E}(u) &\leq& -\langle \mathcal{E}'(u_{n-1}), u-u_{n-1} \rangle - \frac{\alpha}{2} \|u-u_{n-1}\|_V^2\\
& \leq & \sqrt{lm} A\|r_{n} \otimes s_{n}\|_V \|u-u_{n-1}\|_V - \frac{\alpha}{2} \|u-u_{n-1}\|_V^2.\\
\end{eqnarray*}
Besides, by using the fact that $\mathcal{E}(u_{n-1}) - \mathcal{E}(u) \geq 0$, we obtain
$$\|r_n\otimes s_n\|_V \geq \frac{\alpha}{2\sqrt{lm}A}\|u-u_{n-1}\|_V,$$
which is (\ref{kappa}) with $\kappa = \frac{\alpha}{2\sqrt{lm}A} \in (0,1)$ for $A$ large enough.

Hence the result.
\end{proof}

\begin{rem}
 { This result can be generalized to the case of tensor products of more than two Hilbert spaces. Indeed, with the notation of Remark~2.2,  and if we denote
$l_1 = \mbox{dim} V_1, \cdots, l_q = \mbox{dim} V_q$, estimate (\ref{normeprime}) becomes
$$\|\mathcal{E}'(u_n)\|_V \leq \sqrt{l_1 \cdots l_q}A\|r_{n+1} \otimes s_{n+1}\|_V,$$
and the proof still holds.}
\end{rem}

\section{Case of a local minimum}

We are able to extend the results of Theorem 2.1 and Theorem 4.1 in the case when $(r_n,s_n)$ in (\ref{algorithm}) is only defined as a \bfseries local \normalfont 
minimum which ensures the decrease of the energy, more precisely, 
when $(r_n, s_n)$ is defined recursively as:
\begin{equation}
 \label{localalgo}
 (r_n,s_n) = \mathop{\mbox{local argmin}}_{(r,s)\in V_t\times V_x} \mathcal{E}\left(u_{n-1}+r\otimes s\right),
\end{equation}
such that
\begin{equation}
\label{decrease}
 \mathcal{E}\left(u_n\right) < \mathcal{E}\left(u_{n-1}\right),
\end{equation}
where $u_n$ is defined as in (\ref{undef}). 

To extend
these results, we will need an additional assumption (which is naturally fulfilled in the
finite dimensional case), see Remark 5.2 below:
\begin{description}
\item{ $(A5)$ } There exist $\beta, \gamma \in \mathbb{R}_+$ such that
\begin{equation}
 \label{SVD}
\forall (r,s)\in V_t\times V_x,\; \;\beta \|r\|_{V_t} \|s\|_{V_x} \leq \|r\otimes s \|_V \leq \gamma \|r\|_{V_t} \|s\|_{V_x}.
\end{equation}  
\end{description}

\begin{thm}
Let us suppose that the assumptions $(A1)$, $(A2)$, $(A3)$, $(A4)$ and $(A5)$ hold true. Then, the iterations of the algorithm described above are well-defined in the sense that 
(\ref{localalgo}) has at least one local minimizer $(r_n, s_n)$ which satisfies (\ref{decrease}).
Moreover, the sequence $\left( u_n\right)_{n\in\mathbb{N}^*}$ strongly converges in $V$ towards $u$.
\end{thm}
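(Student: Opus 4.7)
My plan is to mirror the proof of Theorem~2.1, replacing every use of the \emph{global} optimality inequality $\mathcal{E}(u_{n-1}+r\otimes s)\ge\mathcal{E}(u_n)$ by a second-order Taylor expansion around the local minimum $(r_n,s_n)$; assumption $(A5)$ enters as a bridge between the $V$-norm on $\Sigma$ and the product of the $V_t$- and $V_x$-norms, and in particular lets us exploit the rescaling freedom $(r,s)\mapsto(\lambda r,s/\lambda)$ inside $\Sigma$. Granting that the Euler equation~(\ref{EL}) still holds at any interior local extremum and that $\mathcal{E}(u_n)\le\mathcal{E}(u_{n-1})$ by~(\ref{decrease}), the analogues of Lemmas~3.2 and~3.3 (summability of $\|r_n\otimes s_n\|_V^2$ and boundedness of $\|u_n\|_V$) carry over verbatim.

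For well-definedness, suppose $u_{n-1}\neq u$, so that by Lemma~3.1 the infimum $\mu=\inf_{(r,s)\in V_t\times V_x}\mathcal{E}(u_{n-1}+r\otimes s)$ is strictly less than $\mathcal{E}(u_{n-1})$. Pick any $c_0\in(\mu,\mathcal{E}(u_{n-1}))$ and consider the nonempty sublevel set $S_{c_0}=\{(r,s)\mid\mathcal{E}(u_{n-1}+r\otimes s)\le c_0\}$, which is invariant under the above rescaling. By $\alpha$-convexity, $\|r\otimes s\|_V$ is bounded on $S_{c_0}$, and after normalizing $\|r\|_{V_t}=\|s\|_{V_x}$, assumption $(A5)$ bounds $\|r\|_{V_t}$ and $\|s\|_{V_x}$ separately. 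Along a minimizing sequence in $S_{c_0}$, weak compactness in $V_t\times V_x$ together with $(A2)$ and the distributional identification of Lemma~2.1 yields a weak limit of the form $r^\infty\otimes s^\infty\in\Sigma$; weak lower semi-continuity of $\mathcal{E}$ then produces a minimizer $(r_n,s_n)$ of $\mathcal{E}(u_{n-1}+\cdot)$ on $S_{c_0}$ whose value equals the infimum on $S_{c_0}$, which is strictly less than $c_0$. Hence $(r_n,s_n)$ lies in the interior of $S_{c_0}$, and so it is a local minimum of $\mathcal{E}(u_{n-1}+\cdot)$ on the whole of $V_t\times V_x$ satisfying~(\ref{decrease}).

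The technical core is the analogue of~(\ref{estimate}). Fix any $(\widetilde r,\widetilde s)\in V_t\times V_x$ and expand $(r_n+t\widetilde r)\otimes(s_n+t\widetilde s)=r_n\otimes s_n+ta+t^2b$ with $a=\widetilde r\otimes s_n+r_n\otimes\widetilde s$ and $b=\widetilde r\otimes\widetilde s$. For $t>0$ small enough to remain inside the local-minimum neighbourhood, $\mathcal{E}(u_n+ta+t^2b)\ge\mathcal{E}(u_n)$; combining this with the Lipschitz-gradient bound $\mathcal{E}(u_n+v)-\mathcal{E}(u_n)\le\langle\mathcal{E}'(u_n),v\rangle+\frac{L}{2}\|v\|_V^2$ (with $L$ uniform in $n$ via Lemma~3.3) and with $\langle\mathcal{E}'(u_n),a\rangle=0$ from~(\ref{EL}), dividing by $t^2$ and letting $t\to 0^+$ produces $\langle\mathcal{E}'(u_n),\widetilde r\otimes\widetilde s\rangle\ge -L\|a\|_V^2$. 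Replacing $\widetilde s$ by $-\widetilde s$ yields the matching upper bound, and hence
\begin{equation*}
|\langle\mathcal{E}'(u_n),\widetilde r\otimes\widetilde s\rangle|\le 2L\bigl(\|\widetilde r\otimes s_n\|_V^2+\|r_n\otimes\widetilde s\|_V^2\bigr).
\end{equation*}
Imposing $\|r_n\|_{V_t}=\|s_n\|_{V_x}$ and $\|\widetilde r\|_{V_t}=\|\widetilde s\|_{V_x}$ by rescaling (which preserves both tensor products and the local-minimum property) and applying $(A5)$ twice converts the right-hand side into $A\|r_n\otimes s_n\|_V\|\widetilde r\otimes\widetilde s\|_V$ for a constant $A$ depending only on $L$, $\beta$, $\gamma$. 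This is the analogue of Proposition~3.3, after which the rest of the argument of Theorem~2.1 transfers verbatim: $\mathcal{E}'(u_n)\rightharpoonup 0$ in $V$, the Lemma~3.4 trick forces $\langle\mathcal{E}'(u_n),u_n\rangle\to 0$ along a subsequence, $\lim\mathcal{E}(u_n)=\mathcal{E}(u)$ follows by convexity, and $\alpha$-convexity then delivers strong convergence of $(u_n)$ to $u$. The main obstacle is precisely this bilinear bound: both the Taylor replacement of the global comparison and the cross-norm bound provided by $(A5)$ are essential and cannot be bypassed.
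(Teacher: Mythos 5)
Your proposal is correct and follows essentially the same route as the paper: you perturb around the local minimizer, use the Euler equation to cancel the first-order term, divide by $t^2$ and let $t\to 0$ to obtain the analogue of estimate (\ref{estimate}), namely $|\langle\mathcal{E}'(u_n),r\otimes s\rangle|\le B\|r_n\otimes s_n\|_V\|r\otimes s\|_V$, with assumption $(A5)$ and balanced (rescaled) representatives converting the quadratic bound into the needed bilinear one. The remaining ingredients (existence via the global minimizer, summability of $\|r_n\otimes s_n\|_V^2$, boundedness of $u_n$, and the conclusion as in Theorem~2.1) match the paper's argument, with only cosmetic differences such as using the descent-lemma form of the Lipschitz bound instead of convexity plus Lipschitz continuity.
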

\normalfont

\vspace{0.1cm}

\begin{proof}
 The proof is similar to the proof of Theorem~2.1 given in Section~3 except for Proposition~3.3 which gives estimate 
(\ref{estimate}):
$$\forall (r,s)\in V_t\times V_x, \; \; |\langle \mathcal{E}'(u_n) , r\otimes s\rangle| \leq A\|r_{n+1}\otimes s_{n+1}\|_V \|r\otimes s\|_V.$$

This estimate is no longer true, but we have a similar result which will be enough to complete the proof. Indeed, 
 let us prove that there exists a constant $B\in\mathbb{R}_+$ such that
\begin{equation}
\label{estimate2}
 \forall n\in\mathbb{N}^* , \; \forall (r,s)\in V_t\times V_x, \; \; |\langle \mathcal{E}'(u_n) , r\otimes s\rangle| \leq B \|r_n\otimes s_n\|_V \|r\otimes s\|_V.
\end{equation}

Let $M\in\mathbb{R}_+$ such that for all $n\in\mathbb{N}^*$, $\|u_n\|_V \leq M$. Its existence is ensured by Lemma~3.3. Let $K=\overline{B}(0,M+2)$ be the closed ball of $V$ centered at $0$ and of radius $M+2$. Let $L$ be the Lipschitz 
constant associated to $K$ in (\ref{Lipschitz}).

Let $(r,s)\in V_t\times V_x$ and $n \in \mathbb{N}^*$. As $(r_n,s_n)$ is a local minimum of 
$V_t \times V_x \ni (y,z) \mapsto \mathcal{E}\left(\sum_{k=1}^{n-1} r_k\otimes s_k + y\otimes z\right)$, there exists a constant $\eta \in (0,1)$ such that 
for all $\varepsilon \in (0,\eta)$, we have
\begin{equation}
\label{locmin}
\mathcal{E}\left(u_{n-1} + (r_n+\varepsilon r)\otimes (s_n+\varepsilon s)\right) \geq \mathcal{E}\left(u_{n-1} + r_n\otimes s_n\right).
\end{equation}

Moreover, by convexity of the functional $\mathcal{E}$, we have the following inequality
\begin{equation}
\label{convex}
\begin{array}{c}
\mathcal{E}\left(u_{n-1} + (r_n+\varepsilon r)\otimes (s_n+\varepsilon s)\right) - \mathcal{E}\left(u_{n-1} + r_n\otimes s_n\right) \\
\leq  \langle\mathcal{E}'(u_n + \varepsilon (r_n\otimes s+r\otimes s_n) + \varepsilon^2 r\otimes s) , \varepsilon (r_n\otimes s+r\otimes s_n) + \varepsilon^2 r\otimes s\rangle. \\
\end{array}
\end{equation}

We deduce from (\ref{locmin}), (\ref{convex}) and property (\ref{Lipschitz}) that, for all $\varepsilon$ small enough so that 
$\|\varepsilon (r_n\otimes s+r\otimes s_n) + \varepsilon^2 r\otimes s\|_V \leq 1$,
\begin{eqnarray*}
0  &\leq &\langle\mathcal{E}'(u_n + \varepsilon (r_n\otimes s+r\otimes s_n) + \varepsilon^2 r\otimes s) , \varepsilon (r_n\otimes s+r\otimes s_n) + \varepsilon^2 r\otimes s\rangle, \\
 &\leq &\langle\mathcal{E}'(u_n) , \varepsilon (r_n\otimes s+r\otimes s_n) + \varepsilon^2 r\otimes s\rangle + L \| \varepsilon (r_n\otimes s+r\otimes s_n) + \varepsilon^2 r\otimes s\|_V^2 . \\
\end{eqnarray*}

As $(r_n,s_n)$ is a local minimum of the functional $V_t\times V_x \ni (y,z) \mapsto \mathcal{E}\left(\sum_{k=1}^{n-1} r_k\otimes s_k + y\otimes z\right)$, $(r_n,s_n)$ still obeys the 
Euler equation (\ref{EL}) and thus $\langle\mathcal{E}'(u_n) , \varepsilon (r_n\otimes s+r\otimes s_n)\rangle = 0$.

Finally, we have
$$\varepsilon^2 \langle\mathcal{E}'(u_n) ,  r\otimes s\rangle + L \varepsilon^2 \| r_n\otimes s+r\otimes s_n + \varepsilon r\otimes s\|_V^2 \geq 0.$$

Dividing this expression by $\varepsilon^2$ and letting $\varepsilon$ go to zero, we obtain
$$\langle\mathcal{E}'(u_n) ,  r\otimes s\rangle + L \| r_n\otimes s+r\otimes s_n \|_V^2 \geq 0,$$

which leads to 
$$|\langle\mathcal{E}'(u_n) ,  r\otimes s\rangle |\leq  L \left( \| r_n\otimes s+r\otimes s_n \|_V^2 + \| r_n\otimes s-r\otimes s_n \|_V^2\right). $$

All this holds without the additional assumption (\ref{SVD}) for all $(r,s)\in V_t \times V_x$. To derive estimate (\ref{estimate2}), we
 use the additional assumption we made on $\|.\|_V$:
\begin{eqnarray*}
|\langle\mathcal{E}'(u_n) ,  r\otimes s\rangle |^{1/2} & \leq & \sqrt{L} \left( \|r_n\otimes s+r\otimes s_n\|_V^2 + \|r_n\otimes s-r\otimes s_n\|_V^2\right)^{1/2}, \\
& \leq &  \sqrt{L} \left( \|r_n\otimes s+r\otimes s_n\|_V + \|r_n\otimes s-r\otimes s_n\|_V\right), \\
& \leq & 2\sqrt{L}\left(\|r_n\otimes s\|_V + \|r\otimes s_n\|_V\right), \\
& \leq & 2\sqrt{L}\gamma \left(\|r_n\|_{V_t} \|s\|_{V_x} + \|r\|_{V_t}\|s_n\|_{V_x}\right).\\
\end{eqnarray*}

We can then choose $(r_n^*, s_n^*)\in V_t \times V_x$ and $(r^*, s^*) \in V_t \times V_x$ such that $r_n^* \otimes s_n^* = r_n \otimes s_n$ and $r^*\otimes s^* = r\otimes s$ 
 and such that $\|r_n^*\|_{V_t} = \|s_n^*\|_{V_x} \leq \sqrt{\frac{1}{\beta} \|r_n\otimes s_n\|_V}$ and
 $\|r^*\|_{V_t} = \|s^*\|_{V_x} \leq \sqrt{\frac{1}{\beta} \|r\otimes s\|_V}$. 
Thus,
\begin{eqnarray*}
|\langle\mathcal{E}'(u_n) ,  r\otimes s\rangle |^{1/2} & = & |\langle\mathcal{E}'(u_n) ,  r^*\otimes s^*\rangle |^{1/2},\\
& \leq & 2\sqrt{L}\gamma\left(\|r_n^*\|_{V_t} \|s^*\|_{V_x} + \|r^*\|_{V_t}\|s_n^*\|_{V_x}\right),\\
& \leq & 4\frac{\sqrt{L}\gamma}{\beta}\|r_n\otimes s_n\|_V^{1/2}\|r\otimes s\|_V^{1/2}.\\
\end{eqnarray*}

And in the end, we obtain estimate (\ref{estimate2}) with $B = 16 \frac{L\gamma ^2}{\beta^2}$. With this result, it is then possible to conclude 
as in the proof of Theorem~2.1.
\end{proof}

\begin{rem}{\normalfont Problem (\ref{penalized}) falls into the scope of Theorem~5.1. On the other hand, this is not the case for problem (\ref{poisson}), for which property (\ref{SVD}) is 
not true (see Remark~2.5). We were not able to prove a similar result in the case when $\|.\|_V$ does not satisfy property (\ref{SVD}).}
\end{rem}

\begin{rem}{
Here are two typical examples for which assumption $(A5)$ holds :
\begin{itemize} 
\item In the case when $V = V_t \otimes V_x$, property (\ref{SVD}) holds with $\beta=\gamma= 1$. 
This holds in uncertainty propagation problems where $V = L^2_T(\mathcal{T}, H)$ with $H$ an Hilbert space of real-valued functions defined on 
$\mathcal{X}$. Denoting by $V_t = L^2_T(\mathcal{T}, \mathbb{R})$ and $V_x = H$, then $V = V_t\otimes V_x$.
\item In other cases, to find an approximation of the global 
minimum of the energy $\mathcal{E}$, the Hilbert spaces $V_t$ and $V_x$ are usually discretized in finite-dimensional spaces. The problem can then be rewritten as a problem 
over $V_t = \mathbb{R}^l$, $V_x = \mathbb{R}^m$ with $l,m\in\mathbb{N}^*$, and then $V$ is naturally defined as the Hilbert space $V = \mathbb{R}^{l\times m}$. Then, assumptions $(A1)$, 
$(A2)$, $(A3)$ and $(A4)$ are automatically satisfied on the discretized spaces. As all the norms are equivalent in finite dimension,
the norms on $\mathbb{R}^l$, $\mathbb{R}^m$ and $\mathbb{R}^{l\times m}$ induced by the norms 
defined over the original Hilbert spaces $V_t$, $V_x$ and $V$  
are equivalent to the Frobenius norms, defined by (\ref{normkl}). These norms satisfy property (\ref{SVD}) since for all $(R,S)\in\mathbb{R}^l \times \mathbb{R}^m$, $\|RS^T\|_{lm} = \|R\|_l \|S\|_m$.
 Hence, the norms induced 
by the norms defined on the original Hilbert spaces automatically satisfy property (\ref{SVD}) even if the property is not satisfied in the continuous spaces.
\end{itemize}
}
\end{rem}

As in Section~4, we can prove that the algorithm defined by (\ref{localalgo}) and (\ref{decrease}) converges exponentially fast in finite dimension.

\begin{thm}
 Let us consider the algorithm defined by (\ref{localalgo}) and (\ref{decrease}). Let $l,m\in\mathbb{N}^*$. Let $V_t =\mathbb{R}^l$, $V_x = \mathbb{R}^m$ and $V = \mathbb{R}^{l\times m}$.
 Then there exist two constants $\tau>0$ and $\sigma \in (0,1)$ such that for all $n\in\mathbb{N}^*$,
\begin{equation}
\label{exponential}
0\leq \mathcal{E}(u_n) -\mathcal{E}(u) \leq \tau \sigma^n,
\end{equation}
and
\begin{equation}
 \label{vector}
\|u-u_n\|_V \leq \sqrt{\frac{2\tau}{ \alpha}} \sigma^{n/2}.
\end{equation}

\end{thm}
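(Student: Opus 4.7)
The plan is to follow the structure of the proof of Theorem~4.1, substituting estimate (\ref{estimate}) by its local-minimum analogue (\ref{estimate2}) established in the course of proving Theorem~5.1. First, since $V_t=\mathbb{R}^l$, $V_x=\mathbb{R}^m$ and $V=\mathbb{R}^{l\times m}$ are endowed with Frobenius norms, assumption $(A5)$ is automatically satisfied with $\beta=\gamma=1$ (see Remark~5.3), so Theorem~5.1 applies: the iterations are well-defined, the decrease condition (\ref{decrease}) together with $\alpha$-convexity keeps $(u_n)_{n\in\mathbb{N}^*}$ bounded in $V$ (as in Lemma~3.3), and (\ref{estimate2}) holds for some constant $B\in\mathbb{R}_+$.

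Next, picking orthonormal bases $(\phi_i)_{1\le i\le l}$ and $(\psi_j)_{1\le j\le m}$ of $V_t$ and $V_x$, the family $(\phi_i\otimes\psi_j)_{i,j}$ is orthonormal in $V$, and applying (\ref{estimate2}) coordinate-wise gives
$$\|\mathcal{E}'(u_n)\|_V\;\leq\;\sqrt{lm}\,B\,\|r_n\otimes s_n\|_V.$$
Combining this bound with the $\alpha$-convexity of $\mathcal{E}$, the identity $\mathcal{E}'(u)=0$, and a Cauchy--Schwarz step exactly as in the derivation of (\ref{kappa}) in the proof of Theorem~4.1, I would obtain, after shifting $n\mapsto n-1$, a geometric lower bound of the form
$$\|r_{n-1}\otimes s_{n-1}\|_V\;\geq\;\kappa\,\|u-u_{n-1}\|_V,\qquad \kappa=\frac{\alpha}{2\sqrt{lm}\,B}\in(0,1).$$

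Then, since any local minimizer still satisfies the Euler equation (\ref{EL}), the key inequality (\ref{init}) remains valid, so
$\mathcal{E}(u_{n-1})-\mathcal{E}(u_{n-2})\le-\tfrac{\alpha}{2}\|r_{n-1}\otimes s_{n-1}\|_V^2\le-\tfrac{\alpha\kappa^2}{2}\|u-u_{n-1}\|_V^2$,
and using the Lipschitz upper bound $\mathcal{E}(u_{n-1})-\mathcal{E}(u)\le L\|u-u_{n-1}\|_V^2$ of (\ref{un2}) this yields
$\mathcal{E}(u_{n-1})-\mathcal{E}(u_{n-2})\le-\tfrac{\alpha\kappa^2}{2L}(\mathcal{E}(u_{n-1})-\mathcal{E}(u))$.
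Writing $\mathcal{E}(u_{n-1})-\mathcal{E}(u)=\mathcal{E}(u_{n-2})-\mathcal{E}(u)+\mathcal{E}(u_{n-1})-\mathcal{E}(u_{n-2})$ and rearranging, I get the two-step geometric recursion
$$\mathcal{E}(u_{n-1})-\mathcal{E}(u)\;\leq\;\sigma\bigl(\mathcal{E}(u_{n-2})-\mathcal{E}(u)\bigr),\qquad \sigma=\Bigl(1+\tfrac{\alpha\kappa^2}{2L}\Bigr)^{-1}\in(0,1),$$
from which (\ref{exponential}) follows with an appropriate constant $\tau>0$; (\ref{vector}) is then an immediate consequence of the $\alpha$-convexity inequality $\tfrac{\alpha}{2}\|u-u_n\|_V^2\le\mathcal{E}(u_n)-\mathcal{E}(u)$.

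The main conceptual subtlety is the index shift between (\ref{estimate}) and (\ref{estimate2}): in the global-minimum case $\mathcal{E}'(u_{n-1})$ is controlled by the \emph{next} increment $r_n\otimes s_n$, whereas in the local-minimum case (\ref{estimate2}) only bounds $\mathcal{E}'(u_n)$ by the \emph{same} increment $r_n\otimes s_n$. This forces the contraction to be expressed between steps $n-1$ and $n-2$ rather than between $n$ and $n-1$, but the geometric rate is preserved and no new analytical difficulty arises beyond careful bookkeeping of the indices.
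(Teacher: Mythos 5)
Your proof is correct, and it uses the same raw ingredients as the paper — estimate (\ref{estimate2}), the orthonormal-basis bound $\|\mathcal{E}'(u_n)\|_V\leq\sqrt{lm}\,B\,\|r_n\otimes s_n\|_V$ (note that the paper's (\ref{normeprime2}) carries a spurious square; your linear version is what the paper actually uses afterwards), the $\alpha$-convexity drop (\ref{init}), and (\ref{un2}) — but it assembles them differently at the key step. The paper insists on keeping the pairing of (\ref{kappa}), i.e.\ it bounds $\|u-u_{n-1}\|_V$ by the \emph{next} increment $\|r_n\otimes s_n\|_V$: to do so it combines the upper bound (\ref{sup}) on $\mathcal{E}(u_{n-1})-\mathcal{E}(u_n)$ (convexity plus the Lipschitz property) with a lower bound obtained from (\ref{un1}), convexity, (\ref{EL}) and (\ref{normeprime2}), arriving at (\ref{kappa}) with $\kappa=\alpha/\bigl(2(L+\sqrt{lm}B)\bigr)$, and then concludes exactly as in Theorem~4.1 with the one-step recursion and $\sigma=1-\lambda$. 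You instead keep the Theorem~4.1 derivation verbatim with shifted indices, obtaining $\|r_{n-1}\otimes s_{n-1}\|_V\geq\frac{\alpha}{2\sqrt{lm}B}\|u-u_{n-1}\|_V$, and then exploit the energy drop between $u_{n-2}$ and $u_{n-1}$, which gives the implicit recursion $(1+\lambda)\,e_{n-1}\leq e_{n-2}$ for $e_k=\mathcal{E}(u_k)-\mathcal{E}(u)$, hence $\sigma=(1+\lambda)^{-1}$. Both yield the stated geometric decay; your variant is slightly leaner, since it dispenses with (\ref{sup}) altogether and since $\sigma=(1+\lambda)^{-1}$ lies in $(0,1)$ automatically, so your cosmetic claim $\kappa\in(0,1)$ (true only after possibly enlarging $B$, just as the paper enlarges $A$ in Theorem~4.1) is not even needed. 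Two harmless slips: the relevant remark is Remark~5.2, not 5.3, and the norms on $\mathbb{R}^l$, $\mathbb{R}^m$, $\mathbb{R}^{l\times m}$ need only be equivalent to the Frobenius norms for $(A5)$ to hold with some $\beta,\gamma$, which is all your argument requires.
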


\begin{proof}
 As the spaces are finite-dimensional, assumptions $(A1)$, $(A2)$, $(A3)$, $(A4)$ and $(A5)$ are automatically fulfilled (see Remark~5.2) and estimate (\ref{estimate2}) holds true. The proof is similar to the proof of 
Theorem~4.1. Indeed, (\ref{ratebeg}), (\ref{init}), (\ref{un2}) and (\ref{un1}) still hold. Then it is sufficient to prove an inequality similar to (\ref{kappa}) to 
prove Theorem~5.2. However, as (\ref{estimate2}) holds instead of (\ref{estimate}), inequality (\ref{normeprime}) 
is replaced by:
\begin{equation}
\label{normeprime2}
 \|\mathcal{E}'(u_n)\|_V \leq \sqrt{lm}B\|r_{n} \otimes s_{n}\|_V^2,
\end{equation}
and consequently, an inequality similar to (\ref{kappa}) must be obtained in another way.

Let $M\in\mathbb{R}_+$ such that for all $n\in\mathbb{N}^*$, $\|u_n\|_V \leq M$.
 Its existence is ensured by Lemma~3.3. 
Let $K=\overline{B}(0, M+2+\|u\|_V)$ be the closed ball of~$V$ centered at $0$ of radius $M+2+\|u\|_V$. Let $L$ be the Lipschitz constant associated with~$K$ in (\ref{Lipschitz}). 

On the one hand, using the convexity of $\mathcal{E}$, (\ref{Lipschitz}) and the fact that $\mathcal{E}'(u) = 0$,  we have
\begin{equation}
\label{sup}
 \mathcal{E}(u_{n-1}) - \mathcal{E}(u_n) \leq -\langle\mathcal{E}'(u_{n-1}), r_n\otimes s_n\rangle \leq L\|r_n\otimes s_n \|_V \|u-u_{n-1}\|_V.
\end{equation}

On the other hand, (\ref{un1}), the convexity of $\mathcal{E}$, (\ref{EL}) and (\ref{normeprime2}) yield
\begin{eqnarray*}
\mathcal{E}(u_{n-1}) - \mathcal{E}(u_n)& = & \mathcal{E}(u_{n-1}) - \mathcal{E}(u) + \mathcal{E}(u) -\mathcal{E}(u_n) \\
& \geq & \frac{\alpha}{2}\|u-u_{n-1}\|^2_V + \mathcal{E}(u) -\mathcal{E}(u_n) \\
& \geq & \frac{\alpha}{2}\|u-u_{n-1}\|^2_V + \langle\mathcal{E}'(u_n), u-u_n\rangle\\
& = & \frac{\alpha}{2}\|u-u_{n-1}\|^2_V + \langle\mathcal{E}'(u_n), u-u_{n-1}\rangle\\
& \geq & \frac{\alpha}{2}\|u-u_{n-1}\|^2_V - \sqrt{lm}B\|r_n\otimes s_n\|_V \|u-u_{n-1}\|_V.\\
\end{eqnarray*}

Then, using (\ref{sup}), we have (\ref{kappa}) with $\kappa = \frac{\alpha}{2\left(L+\sqrt{lm}B\right)} \in (0,1)$.
We can conclude as in the proof of Theorem~4.1.
\end{proof}

\begin{rem}{\normalfont The results given in this section may not stand when we consider more than two Hilbert spaces. Indeed, the scheme of the proof of Theorem~5.1 cannot be easily adapted
 and we do not necessarily have an estimate similar to (\ref{estimate2}).}
\end{rem}

\section{Numerical results}

In this section, we describe how we implemented the algorithm introduced in Section~2 for the resolution of problem (\ref{penalized}) in a very simple setting, 
namely a one-dimensional membrane problem with uncertainty. We
 present the numerical results we obtained. Additional investigations to demonstrate the applicability and the efficiency of the procedure on high-dimensionnal problems are 
still required. We however refer to Nouy\cite{Nouy09} 
for illustrations of the interest of the method for problems in high dimensions.

\subsection{Implementation of the algorithm}
Let us recall problem (\ref{penalized}). Let $f\in L^2_T(\mathcal{T}, H^{-1}(\mathcal{X}))$ and $g\in L^2_T(\mathcal{T}, H^1_0(\mathcal{X}))$. Let us assume that the random variable 
$T$ has a probability density $p(t)$ for $t\in\mathcal{T}$. In other words, 
$$
\mathbb{P}(T \in \mathcal{D}) = \int_{\mathcal{D}} p(t)\,dt,
$$
where $\mathcal{D}$ is a measurable subset of $\mathcal{T}$. 

For a given value of the penalization parameter $\rho\in\mathbb{R}_+$, we wish to calculate an approximation of the minimizer of

 \begin{equation*}
  \mathop{\inf}_{v\in L^2_T(\mathcal{T}, H^1_0(\mathcal{X}))} \mathcal{E}(v),
 \end{equation*}
where $$\mathcal{E}(v) = \mathbb{E} \left[ \frac{1}{2}\int_{\mathcal{X}} |\nabla_x v(T,x)|^2 \,dx - \left\langle f(T,.), v(T,.) \right\rangle_{H^{-1}(\mathcal{X}), H^1_0(\mathcal{X})} + 
\frac{\rho}{2} \int_{\mathcal{X}} [g(T,x) -v(T,x)]_+^2 dx \right].$$

In other words, 
\begin{eqnarray*}
 \mathcal{E}(v)&  = & \frac{1}{2}\int_{\mathcal{X}\times \mathcal{T}} |\nabla_x v(t,x)|^2 p(t)\,dx\,dt - \int_{\mathcal{T}}\left\langle f(t,.), v(t,.) \right\rangle_{H^{-1}(\mathcal{X}), H^1_0(\mathcal{X})} p(t)\,dt\\
& &  + \frac{\rho}{2} \int_{\mathcal{X}\times \mathcal{T}} [g(t,x) -v(t,x)]_+^2 p(t) \,dx\,dt.\\
\end{eqnarray*}

In this case, our algorithm can be rewritten in the following form. Set $f_0 = f$ and $g_0=g$ and define recursively 
$(r_n,s_n) \in  L^2_T(\mathcal{T}, \mathbb{R}) \times H^1_0(\mathcal{X})$ as
$$(r_n,s_n)  \in  \mathop{\mbox{argmin}}_{(r,s)\in L^2_T(\mathcal{T}, \mathbb{R}) \times H^1_0(\mathcal{X})}  \mathcal{E}_n(r\otimes s),$$
with
\begin{eqnarray*}
 \mathcal{E}_n(r\otimes s) & = & \frac{1}{2}\int_{\mathcal{X}\times \mathcal{T}} |\nabla_x \left(r\otimes s\right)(t,x)|^2 p(t)\,dx\,dt- \int_{\mathcal{T}}\left\langle f_{n-1}(t,.), r\otimes s(t,.) \right\rangle_{H^{-1}(\mathcal{X}), H^1_0(\mathcal{X})} p(t)\,dt \\
& & +\frac{\rho}{2} \int_{\mathcal{X}\times \mathcal{T}} [g_{n-1}(t,x) - r\otimes s(t,x)]_+^2p(t)\,dx\,dt,\\
\end{eqnarray*}
where 
\begin{eqnarray*}
f_n& = &f_{n-1} + \Delta_x (r_n\otimes s_n),\\
g_n &= &g_{n-1} - r_n\otimes s_n.\\ 
\end{eqnarray*}

Indeed, 
$$\mathcal{E}\left(u_{n-1} + r\otimes s\right) = \mathcal{E}\left(u_{n-1}\right) - \frac{\rho}{2}\int_{\mathcal{X}\times \mathcal{T}} [g_{n-1}(t,x)]_+^2p(t)\,dx\,dt +\mathcal{E}_n(r\otimes s),$$
where $u_n$ is defined as in (\ref{undef}).

In fact, from Theorem 5.1, it is sufficient for $(r_n,s_n)$ to be a \em local \normalfont minimum of 
$ L^2_T(\mathcal{T}, \mathbb{R}) \times H^1_0(\mathcal{X}) \ni (r,s) \mapsto \mathcal{E}_n(r\otimes s)$ such that:
$$\mathcal{E}_n(r_n\otimes s_n) < \frac{\rho}{2} \int_{\mathcal{X}\times \mathcal{T}} [g_{n-1}(t,x)]_+^2 p(t)\,dx\,dt,$$
which ensures (\ref{decrease}).

\vspace{0.5cm}

We write the algorithm in the discrete case, and, for clarity, we restrict ourselves to the case of two open intervals $\mathcal{T}$ and $\mathcal{X}$ of $\mathbb{R}$. More precisely,
 $\mathcal{T} = ]\underline{t}, \overline{t}[$ and $\mathcal{X} = ]\underline{x}, \overline{x}[$, with $\underline{t}, \overline{t},\underline{x}, \overline{x} \in \mathbb{R}$, such that 
$\underline{t}<\overline{t}$ and $\underline{x}<\overline{x}$.
 
Let $l,m\in\mathbb{N}^*$, which will denote respectively the number of degrees of freedom in the discretized spaces of $V_t$ and $V_x$. Let us introduce a subdivision $(t_i)_{1\leq i \leq l}$ such that 
$\underline{t} = t_1 < t_2 <\cdots < t_{l-1} < t_{l} = \overline{t}$ and a subdivision $(x_j)_{j=1}^m$ such that $\underline{x} = x_0 < x_1 < \cdots < x_m < x_{m+1} = \overline{x}$. 

Let $(\phi_i)_{1\leq i \leq l}\subset V_t$ and $(\psi_j)_{1\leq j\leq m}\subset V_x$ 
be functions such that
\begin{equation}\label{eq:phii}
\phi_i(t_{i'}) = \delta_{ii'}, \; \forall 1\leq i,i' \leq l,
\end{equation}
and
\begin{equation}\label{eq:psij}
\psi_j(x_{j'}) = \delta_{jj'}, \; \forall 1\leq j \leq m, \; 0\leq j' \leq m+1,
\end{equation}
and let us consider $\widetilde{V}_t = \mbox{Span}(\phi_i)_{1\leq i\leq l}$ and $\widetilde{V}_x = \mbox{Span}(\psi_j)_{1\leq j\leq m}$. For example, $\mathbb{P}_q$ or $\mathbb{Q}_q$ finite 
elements satisfy these properties for all $q\in\mathbb{N}^*$.

Our goal is to find an approximation of the function $u$ under the following form
\begin{equation}\label{eq:discrete}
u(t,x) \approx \sum_{i=1}^l \sum_{j=1}^m U^{ij} \phi_i(t)\psi_j(x),
\end{equation}
where $U = (U^{ij})_{1\leq i\leq l, 1\leq j\leq m} \in \mathbb{R}^{k\times l}$. 

Let $D\in\mathbb{R}^{m \times m}$ be the symmetric positive definite matrix which corresponds to the discretization of the one-dimensional operator
 $-\partial_{xx}$ in $V_x$, in other words, for all $1\leq j,j' \leq m$, 
$$D^{jj'} = \int_{\mathcal{X}} \partial_x\psi_j(x)\partial_x\psi_{j'}(x)\,dx.$$

Let also $\Phi = (\Phi^{ii'})_{1\leq i,i' \leq l}\in\mathbb{R}^{l\times l}$ and $\Psi = (\Psi^{jj'})_{1\leq j,j' \leq m}\in\mathbb{R}^{m\times m}$ be the symmetric positive definite matrices defined as
$$
\Phi^{ii'} = \int_{\mathcal{T}} \phi_i(t)\phi_{i'}(t)p(t)\,dt, \; \forall 1\leq i,i'\leq l,
$$
and
$$
\Psi^{jj'} = \int_{\mathcal{X}} \psi_j(x)\psi_{j'}(x)\,dx, \; \forall 1\leq j,j'\leq m.
$$

With discretization (\ref{eq:discrete}), the term $\frac{1}{2}\int_{\mathcal{X}\times \mathcal{T}} |\nabla_x u(t,x)|^2 p(t)\,dx\,dt$ is then equal to
$$
\frac{1}{2}\int_{\mathcal{X}\times \mathcal{T}} |\nabla_x u(t,x)|^2 p(t)\,dx\,dt \approx\mbox{Tr}(\Phi UDU^T).
$$

Similarly, if we denote by $F = (F^{ij})_{1\leq i\leq l, 1\leq j\leq m} \in \mathbb{R}^{l\times m}$ the matrix defined as, for all $1\leq i\leq l$ and $1\leq j\leq m$, 
$$
F^{ij} = \int_{\mathcal{T}} \left\langle f(t,.), \phi_i\otimes\psi_j(t,.) \right\rangle_{H^{-1}(\mathcal{X}), H^1_0(\mathcal{X})} p(t)\,dt,
$$
the term $\int_{\mathcal{T}} \left\langle f(t,.), u(t,.) \right\rangle_{H^{-1}(\mathcal{X}), H^1_0(\mathcal{X})} p(t)\,dt$ is approximated by
$$
\int_{\mathcal{T}} \left\langle f(t,.), u(t,.) \right\rangle_{H^{-1}(\mathcal{X}), H^1_0(\mathcal{X})} p(t)\,dt \approx \mbox{Tr}(FU^T).
$$

The approximation of the term $\frac{\rho}{2} \int_{\mathcal{X}\times \mathcal{T}} [g(t,x) -u(t,x)]_+^2 p(t) \,dx\,dt$ is more subtle. Indeed, let us approximate the function $g(x,t)$ in 
the discretized space $\widetilde{V}_t \otimes \widetilde{V}_x$ by
$$
g(t,x) \approx \sum_{i=1}^l \sum_{j=1}^m G^{ij} \phi_i(t)\psi_j(x).
$$
Given relationships (\ref{eq:phii}) and (\ref{eq:psij}), a natural way to define $G^{ij}$ is the following
$$
G^{ij} = g(t_i, x_j) \; \forall 1\leq i \leq l, \; 1\leq j\leq m.
$$

It then holds
$$
 \frac{\rho}{2} \int_{\mathcal{X}\times \mathcal{T}} [g(t,x) -u(t,x)]_+^2 p(t) \,dx\,dt  \approx  
\frac{\rho}{2} \int_{\mathcal{X}\times \mathcal{T}} \left[ \sum_{i=1}^l \sum_{j=1}^m (G^{ij} - U^{ij}) \phi_i(t)\psi_j(x) \right]_+^2p(t)\,dx\,dt.
$$

Let $w$ be the function defined as $w:(t,x)\in\mathcal{T}\times \mathcal{X} \mapsto w(t,x) = \left[\sum_{i=1}^l \sum_{j=1}^m (G^{ij} - U^{ij}) \phi_i(t)\psi_j(x)\right]_+$. From assumptions (\ref{eq:phii}) and (\ref{eq:psij}), it holds
$$
w(t_i, x_j) = [G^{ij} - U^{ij}]_+, \; \forall 1\leq i \leq l,\; 1\leq j \leq m.
$$
Thus, we perform a supplementary approximation, that is to approximate the function $w$ itself as
\begin{eqnarray*}
w(t,x) & \approx & \sum_{i=1}^l \sum_{j=1}^m w(t_i,x_j) \phi_i(t)\psi_j(x),\\
& = & \sum_{i=1}^l \sum_{j=1}^m [G^{ij} - U^{ij}]_+ \phi_i(t)\psi_j(x).\\
\end{eqnarray*}

Finally, it holds
\begin{eqnarray*}
 \frac{\rho}{2} \int_{\mathcal{X}\times \mathcal{T}} [g(t,x) -u(t,x)]_+^2 p(t) \,dx\,dt & \approx &\frac{\rho}{2} \int_{\mathcal{X}\times \mathcal{T}} \left(\sum_{i=1}^l \sum_{j=1}^m \left[G^{ij} - U^{ij} \right]_+\phi_i(t)\psi_j(x)\right)^2 p(t)\,dx\,dt,\\
& = &  \mbox{Tr}(\Phi [G-U]_+ \Psi [G-U]_+^T).\\
\end{eqnarray*}
We also made the following mass lumping approximation, that is to consider that $\Phi \approx \mbox{Id}_{l}$ and $\Psi \approx \mbox{Id}_m$. 

\vspace{1 cm}

Then, the discretized problem (\ref{penalized}) can be rewritten as
\begin{equation*}
\begin{array}{l}
 \mbox{Find } U\in\mathbb{R}^{l\times m} \mbox{ such that }\\
 U \in \mathop{\mbox{argmin}}_{V\in \mathbb{R}^{l \times m}} \frac{1}{2} VD:V - F:V + \frac{\rho}{2} [G-V]_+:[G-V]_+,\\
\end{array}
\end{equation*}
where for $A,B\in\mathbb{R}^{l\times m}$, 
$$
A:B = \mbox{Tr}( A B^T) = \sum_{1\leq i \leq l} \sum_{1 \leq j \leq m} A_{ij} B_{ij}.
$$ 

This problem is equivalent to:
$$\mbox{Find } U\in\mathbb{R}^{l\times m} \mbox{ such that } UD = F + \rho[G-U]_+.$$

\vspace{0.1cm}
For each function $r\in V_t$ and $s\in V_x$, we denote by $R\in\R^l$ and $S\in \R^m$, their coordinates in the bases $(\phi_i)_{1\leq i\leq l}$ and $(\psi_j)_{1\leq j\leq m}$, which are 
given by
$$
\forall 1\leq i\leq l, \; R^i = \int_{\T} r(t)\phi_i(t)\,dt,
$$
and
$$
\forall 1\leq j\leq m, \; S^j = \int_{\X} s(x)\psi_j(x)\,dx.
$$
Our algorithm can then be rewritten as:

Choose a threshold $\varepsilon >0$ and set $F_0 = F$, $G_0 = G$. At iteration $n\geq 1$:
\begin{enumerate}
 \item find $R_n = (R_n^i)_{1\leq i\leq l}$ and $S_n = (S_n^j)_{1\leq j\leq m}$ two vectors respectively in $\mathbb{R}^l$ and $\mathbb{R}^m$ such that:
$$
(R_n, S_n) \in \mathop{\mbox{argmin}}_{(R,S)\in\mathbb{R}^l \times \mathbb{R}^m}\mathcal{E}_n(R,S),
$$
with 
\begin{eqnarray*}
 \mathcal{E}_n(R,S) & = & \frac{1}{2} (RS^T)D:(RS^T) - F_{n-1}:(RS^T) \\
& &  +\frac{\rho}{2} [G_{n-1} - RS^T]_+ :[G_{n-1} - RS^T]_+. \\
\end{eqnarray*}
\item set $F_n = F_{n-1} - (R_n S_n^T)D,$ and $G_n = G_{n-1} - R_nS_n^T$. 
\item if $\| F_n + \rho [G_n]_+\| \geq \varepsilon$, proceed to iteration $n+1$. Otherwise, stop.
\end{enumerate}

\vspace{0.1cm}

The remaining question is: how can we compute $(R_n,S_n)$ at step 1? This critical step is described in the following section.

\subsection{Computing $(R_n,S_n)$}

\subsubsection{Fixed-point procedure}

Let us first describe a method which has been proposed by Nouy\cite{Nouy09} and Chinesta\cite{Chinesta}, that is the fixed-point procedure and which we use in our final 
numerical implementation (see Section~6.2.2). We present this algorithm 
in a particular case. Let us consider $V_t = \mathbb{R}^l$, $V_x = \mathbb{R}^m$ and $V = \mathbb{R}^{l\times m}$ endowed with the Frobenius norms defined by 
(\ref{normkl}). 
We fix a given matrix $M\in\mathbb{R}^{l\times m}$. Let us define
 the energy functional as $\mathcal{E}(W) =  \|M-W\|_{V}^2$ for $W\in\mathbb{R}^{l\times m}$. In this particuler case, applying 
the greedy algorithm described above consists in computing the Singular Value Decomposition of the matrix $M$.

In this particular case, the greedy algorithm can be rewritten in the following form.

Choose a threshold $\varepsilon>0$ and set $M_0 = M$. At iteration $n\geq 1$,
\begin{enumerate}
 \item find two vectors $R_n$ and $S_n$ respectively in $\mathbb{R}^l$ and $\mathbb{R}^m$ such that
\begin{equation}
(R_n, S_n) \in \mathop{\mbox{argmin}}_{(R,S)\in\mathbb{R}^l \times \mathbb{R}^m} \left\|M_{n-1} - RS^T\right\|_{V}^2.
\end{equation}
\item set $M_n = M_{n-1} - R_n S_n^T$.
\item if $\| M_n \|_{V} \geq \varepsilon$, proceed to iteration $n+1$. Otherwise, stop.
\end{enumerate}

The Euler equation associated to this problem can be rewritten as
$$\left\{
\begin{array}{ccc}
 \|S_n\|_{V_x}^2 R_n & = &M_{n-1} S_n,\\ [5pt]
\|R_n\|_{V_t}^2 S_n & = & (M_{n-1})^T R_n.\\
\end{array} \right .
$$

The method which is generally used\cite{LeBr09} to solve these Euler equation is a fixed-point algorithm, which simply reads (for a fixed $n$):
 at iteration $q\geq 0$, compute two vectors $(R_n^{(q+1)}, S_n^{(q+1)})\in \mathbb{R}^k \times \mathbb{R}^l$ such that
\begin{equation}
\label{fixedpoint}
 \left\{
\begin{array}{ccc}
 \|S_n^{(q)}\|_{V_x}^2 R_n^{(q+1)} & = &M_{n-1} S_n^{(q)},\\[5pt]
\|R_n^{(q+1)}\|_{V_t}^2 S_n^{(q+1)} & = & (M_{n-1})^T R_n^{(q+1)}.\\
\end{array} \right . 
\end{equation}

One can check\cite{LeBr09} that this procedure is similar to the power method to compute the largest eigenvalue (and associated eigenvector) of the matrix $(M_{n-1})^TM_{n-1}$.

\vspace{0.5cm}
One could think of transposing this fixed-point procedure to the case of the obstacle problem we consider in this article. In our case, the Euler equation
$$\left\{
\begin{array}{ccc}
 (R_n:R_n) D S_n & = & F_{n-1}^T R_n + \rho [G_{n-1} - R_n S_n^T]_+^T R_n, \\[5pt]
(DS_n:S_n) R_n & = & F_{n-1} S_n + \rho [G_{n-1} - R_n S_n^T]_+ S_n .\\
\end{array} \right .
$$
could be solved {\it a priori} with a fixed point algorithm, which, at iteration $q$, might be written as
$$\left\{
\begin{array}{ccc}
 (R_n^{(q)}:R_n^{(q)}) D S_n^{(q+1)} & = & F_{n-1}^T R_n^{(q)} + \rho \left[G_{n-1} - R_n^{(q)} \left(S_n^{(q)}\right)^T\right]_+^T R_n^{(q)}, \\[5pt]
(DS_n^{(q+1)}:S_n^{(q+1)}) R_n^{(q+1)} & = & F_{n-1} S_n^{(q+1)} + \rho\left[G_{n-1} - R_n^{(q)} \left(S_n^{(q+1)}\right)^T\right]_+ S_n^{(q+1)}. \\
\end{array} \right .
$$

Unfortunately, we have not been able to make this
fully-explicit fixed point algorithm converge for large values of the parameter $\rho$. We therefore have
decided to resort to a minimization procedure.

\subsubsection{Minimization procedure}

\vspace{0.1cm}

The approach we adopt then is the following. We choose an initial pair $(R_n^0, S_n^0)\in\mathbb{R}^l\times\mathbb{R}^m$ and then perform a 
quasi-newton algorithm to find a local minimum of the function 
$$\frac{1}{2} \left(RS^T\right)D:\left(RS^T\right) - F_{n-1}:\left(RS^T\right)  +\frac{\rho}{2} \left[G_{n-1} - RS^T\right]_+ : \left[G_{n-1} - RS^T\right]_+.$$

The main difficulty is to find a proper initial pair $(R_n^{(0)}, S_n^{(0)})$ such that 
$$\frac{1}{2} \left(R_n^{(0)}S_n^{(0)T}\right)D:\left(R_n^{(0)}S_n^{(0)T}\right) - F_{n-1}:\left(R_n^{(0)}S_n^{(0)T}\right)  +\frac{\rho}{2} \left[G_{n-1} - R_n^{(0)}S_n^{(0)T}\right]_+ : \left[G_{n-1} - R_n^{(0)}S_n^{(0)T}\right]_+$$
$$ < \frac{\rho}{2} [G_{n-1}]_+ : [G_{n-1}]_+, $$
to ensure that the energy decreases (see (\ref{decrease})).
 
Let us describe our approach in the continuous setting with the notation used in Section~4. It consists in finding a pair $(r_n^{(0)}, s_n^{(0)})\in V_t\times V_x$ such that
$$\mathcal{E}\left(u_{n-1} + r_n^{(0)}\otimes s_n^{(0)} \right) < \mathcal{E}\left(u_{n-1} \right).$$

We notice that for $(r,s)\in V_t\times V_x$, and $\eta >0$, we have
$$\mathcal{E}\left(u_{n-1} + \eta r\otimes s \right) - \mathcal{E}\left(u_{n-1}\right) = \eta\left\langle \mathcal{E}'\left(u_{n-1}\right), r\otimes s\right\rangle + o(\eta),$$
for $\eta$ small enough.

The idea is then to find a pair $(r,s)\in V_t \times V_x$ such that $\langle \mathcal{E}'(u_{n-1}), r\otimes s\rangle <0$,
 so that there exists $\eta>0$ small enough 
for which $\mathcal{E}\left(u_{n-1} + \eta r\otimes s \right) - \mathcal{E}\left(u_{n-1}\right) <0$. Then, 
$r_n^{(0)}\otimes s_n^{(0)} = \eta r\otimes s$ is a good initial guess.

Let us first consider the pair $\left(\overline{r_n^{(0)}},\overline{s_n^{(0)}}\right)\in V_t\times V_x$ such that
$$\left(\overline{r_n^{(0)}}, \overline{s_n^{(0)}}\right) \in \mathop{\mbox{argmin}}_{(r,s)\in V_t\times V_x} \frac{1}{2}\left\|\mathcal{E}'\left(u_{n-1} \right) - r\otimes s\right\|_V^2.$$

In other words, we consider $\left(\overline{r_n^{(0)}},\overline{s_n^{(0)}}\right)$ the first term of the singular value decomposition of $\mathcal{E}'\left(u_{n-1}\right)$ in $V$. The 
Euler equations then imply
$$-\left\langle \mathcal{E}'(u_{n-1}) - \overline{r_n^{(0)}}\otimes \overline{s_n^{(0)}}, \overline{r_n^{(0)}}\otimes\overline{s_n^{(0)}}\right\rangle = 0,$$

and therefore,
$$\left \langle \mathcal{E}'(u_{n-1}), \overline{r_n^{(0)}}\otimes \overline{s_n^{(0)}}\right\rangle =  \left\| \overline{r_n^{(0)}}\otimes \overline{s_n^{(0)}}\right\|^2_V >0.$$

By taking $r_n^{(0)}\otimes s_n^{(0)} = -\eta\overline{r_n^{(0)}}\otimes \overline{s_n^{(0)}}$, there exists then $\eta >0$ small enough such that
$$\mathcal{E}\left(u_{n-1}+ r_n^{(0)}\otimes s_n^{(0)} \right) - \mathcal{E}\left(u_{n-1}\right) <0.$$

In the discrete case associated to problem (\ref{penalized}), $\left(\overline{R_n^{(0)}}, \overline{S_n^{(0)}}\right)$ is obtained by taking the first term of the singular value decomposition of
 the matrix $F_{n-1} +  \rho [G_{n-1} ]_+$. This can 
be done with a fixed point procedure similar to (\ref{fixedpoint}).

Once we have this initial guess $\left(R_n^{(0)}, S_n^{(0)}\right)$, we perform a quasi-newton algorithm to minimize the energy. The computations are done with the software Scilab\cite{Scilab} and the quasi-Newton 
procedure is performed via the \itshape optim \normalfont procedure of Scilab.

Let us point out that this procedure is intrusive in general.

\subsection{One-dimensional membrane problem}

In this section, we present the results we obtained with this algorithm on the following membrane problem. 

We suppose $\mathcal{X} = \mathcal{T} = (0,1)$. We consider a random variable $T$ following a uniform law 
of probability on the interval $(0,1)$. We wish to study problem (\ref{obstacle}) with the following values for $f$ and $g$,
$$\forall (t,x)\in(0,1)^2 \;, \; f(t,x) = -1 \mbox{ and } g(t,x) = t[\sin(3\pi x)]_+ + (t-1)[\sin(3\pi x)]_-.$$

The negative part of $a\in\mathbb{R}$, i.e. $[a]_- = 0$ if $a\geq 0$, and $[a]_- = -a$ if $a\leq 0$, is denoted by $[a]_-$.

The above problem models a rope attached at $x=0$ and $x=1$ subjected to gravity and resting upon obstacles whose altitudes are given by $g(t,x)$. The quantity $u(t,x)$ then represents the altitude of the rope at 
abscissa $x$ when $T=t$.

This problem is approximated by problem (\ref{penalized}) with parameter $\rho=2500$. The problem is discretized with a regular mesh and $\mathbb{P}_1$ finite elements in each 
direction. Discretization parameters are chosen as $l=m=40$.

Figure 2 represents the altitude of the obstacles given by $g(t,x)$ for $(t,x)\in[0,1]^2$.

\begin{figure}[h]
\begin{center}
\psfrag{g(t,x)}{$g(t,x)$}
\psfrag{t}{$t$}
\psfrag{x}{$x$}
 \includegraphics[width=8cm,bb=   73   252   521   589]{./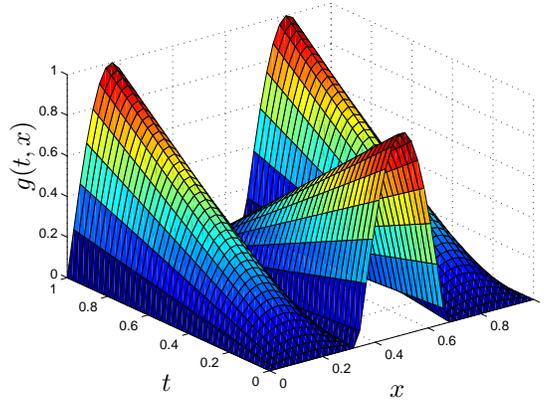}
  \caption{Altitude of the obstacles as a function of $t$ and $x$.}
\end{center}
\vspace*{8pt}
\end{figure}

The algorithm described in the previous sections is then applied with the following stopping criterion: $\|F_n+\rho[G_n]_+\|_V < 10^{-4}$ with 
$\|A\|_V = \sqrt{\mbox{Tr}(AA^T)} = \sqrt{\sum_{i=1}^k\sum_{j=1}^l A_{ij}^2}$ for $A\in\mathbb{R}^{k\times l}$.

Figure 3 represents the evolution of $\log_{10}\left(\mathcal{E}(u_n) - \mathcal{E}(u)\right)$ and of $\log_{10}(\|F_n+\rho[G_n]_+\|_V)$.

\begin{figure*}[h]
\begin{center}
\label{rate}
\psfrag{Number of iterations}[][][0.6]{Number of iterations}
\psfrag{log energy}[][][0.7]{$\log_{10}(\mathcal{E}(u_n) - \mathcal{E}(u))$}
 \includegraphics[width=5.9cm,bb=   73   252   521   589]{./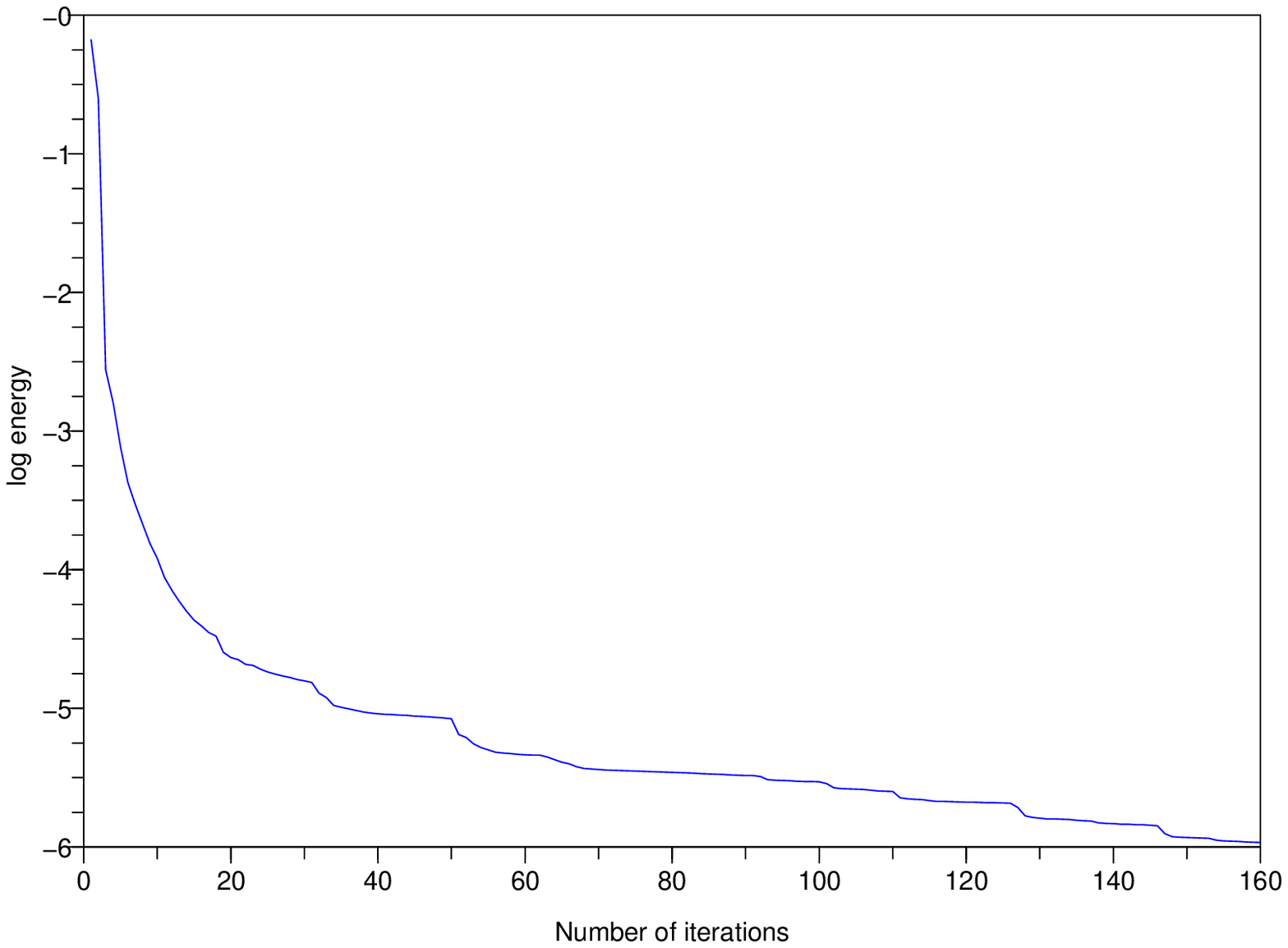}
\psfrag{Number of iterations}[][][0.6]{Number of iterations}
\psfrag{log limit}[][][0.7]{$\log_{10}(\|F_n + \rho [G_n]_+\|_V)$}
 \includegraphics[width=5.9cm,bb=   73   252   521   589]{./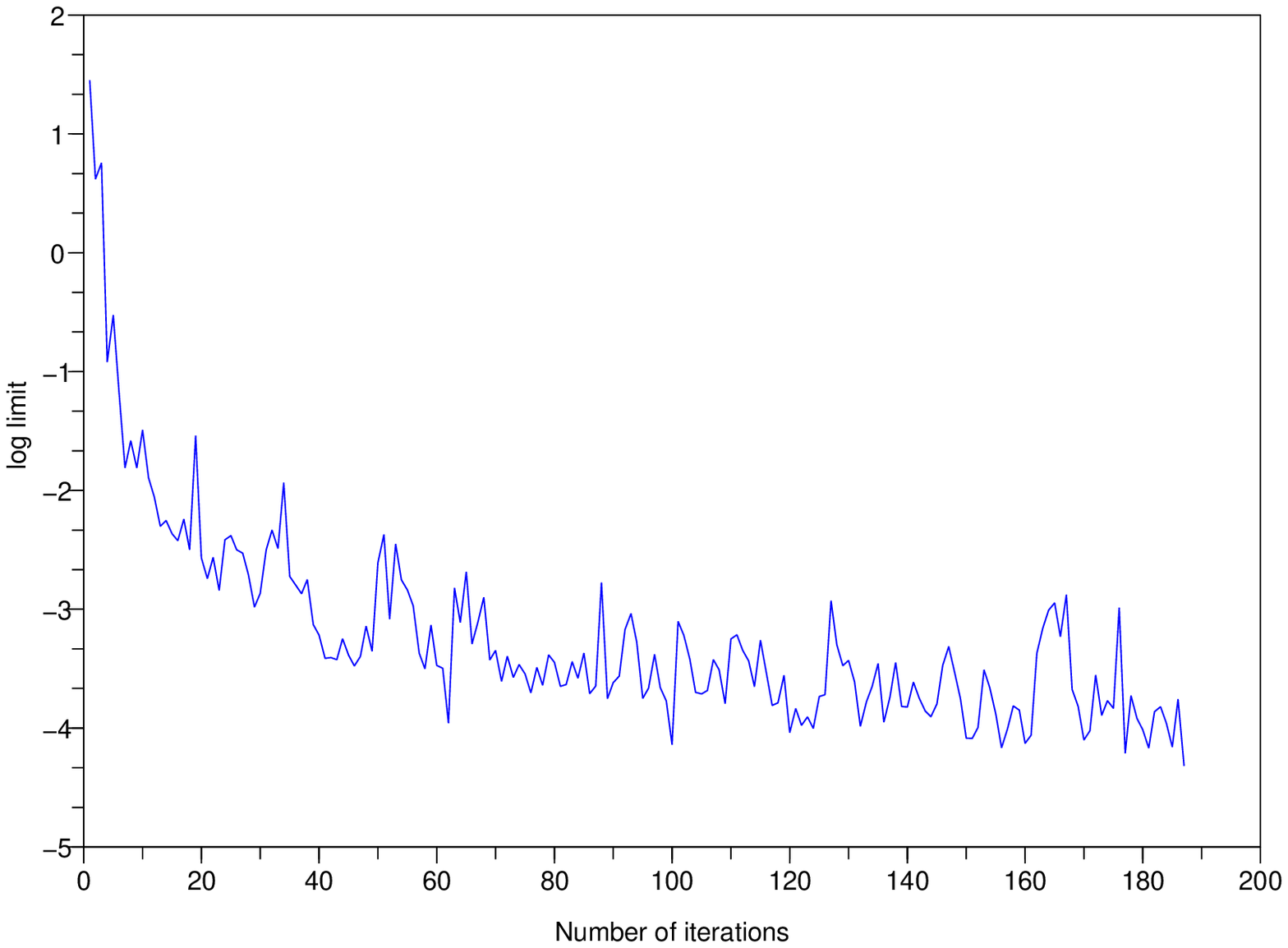}
  \caption{Evolution of $\log_{10}\left(\mathcal{E}(u_n) - \mathcal{E}(u)\right)$ (left) and of $\log_{10}(\|F_n+\rho[G_n]_+\|_V)$ (right) as a function of the number of terms $(R_n,S_n)$ computed.}
\end{center}
\vspace*{8pt}
\end{figure*}

We can see that our algorithm captures very quickly the main modes of the solution and that both the energy and the $V$-norm of the residue $\|F_n+\rho[G_n]_+\|_V$ converges
 exponentially fast, as predicted by Theorem 5.1.

Figure 4 represents the results obtained for the solution $u(t,x)$. Figure 5 and 6 represents $u(t,x)$ and $g(t,x)$ for some special values of $T$.

\begin{figure}[h]
\label{fig3}
\begin{center}
\psfragscanon
\psfrag{u(t,x)}{$u(t,x)$}
\psfrag{t}{$t$}
\psfrag{x}{$x$}
 \includegraphics[width=8cm,bb=   73   252   521   589]{./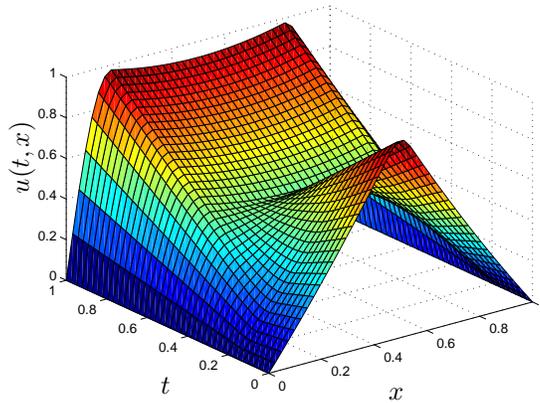}

\vspace*{8pt}
  \caption{Altitude of the rope as a function of $t$ and $x$.}
\end{center}
\vspace*{8pt}
\end{figure}

\begin{figure*}[h]
\begin{center}
\psfrag{u(x)}[][][0.4]{$u(x)$}
\psfrag{g(x)}[][][0.4]{$g(x)$}
 \includegraphics[width=5.9cm,bb=   73   252   521   589]{./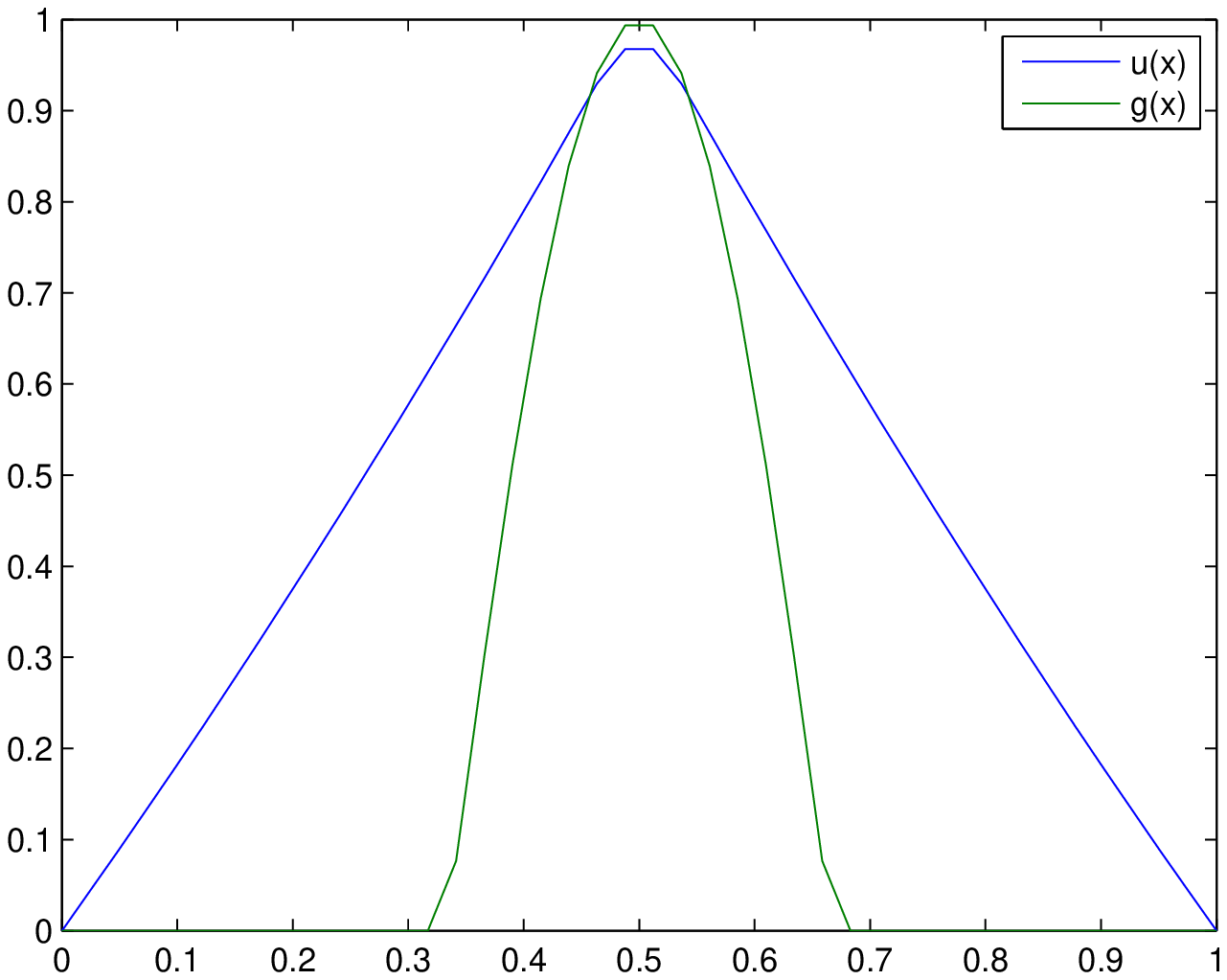}
 \includegraphics[width=5.9cm,bb=   73   252   521   589]{./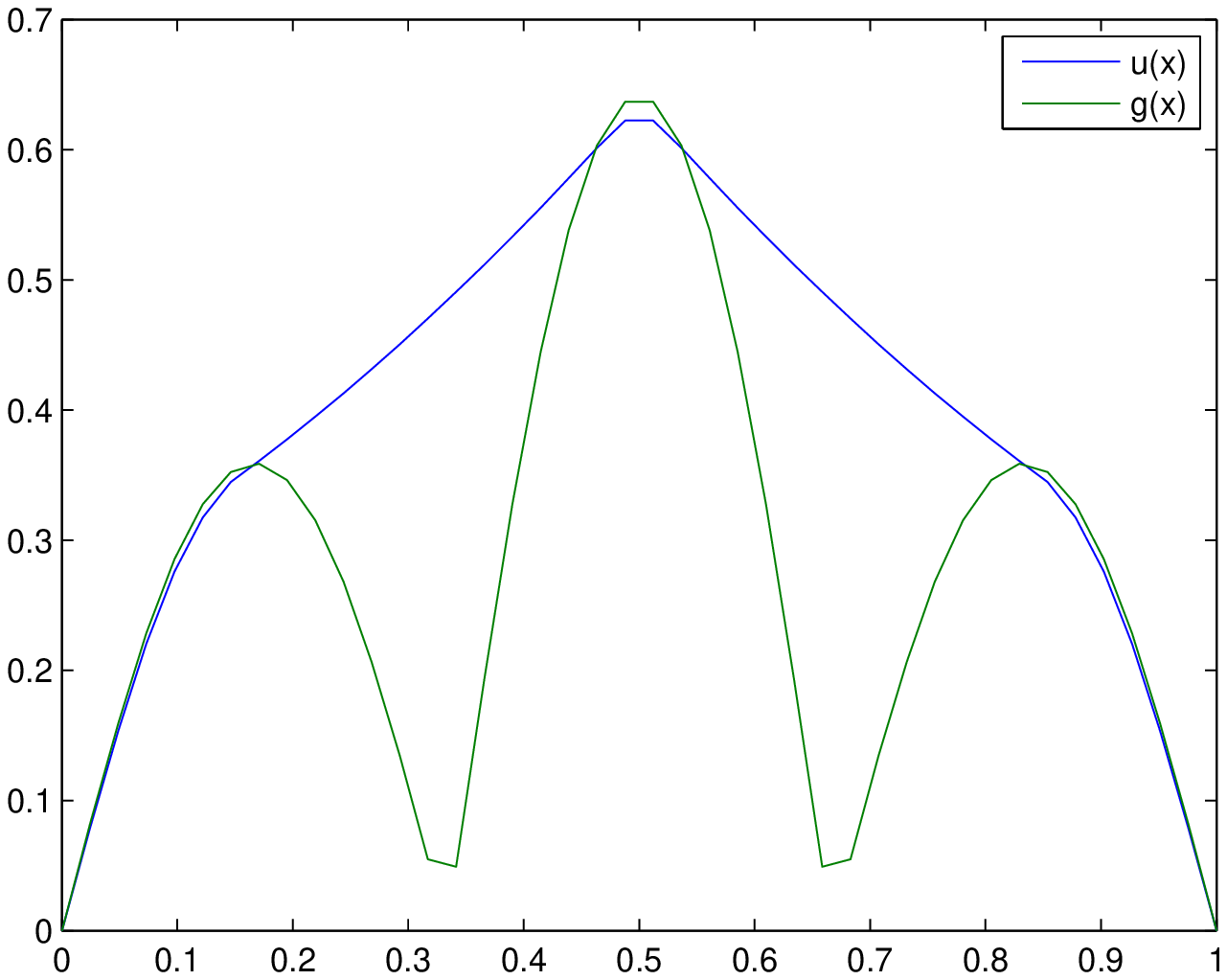}
  \caption{Profile of $u$ and $g$ for $T=0$ (left) and $T=0.375$ (right).}
\end{center}
\vspace*{8pt}
\end{figure*}

\begin{figure*}[h]
\begin{center}
\psfrag{u(x)}[][][0.4]{$u(x)$}
\psfrag{g(x)}[][][0.4]{$g(x)$}
 \includegraphics[width=5.9cm,bb=   73   252   521   589]{./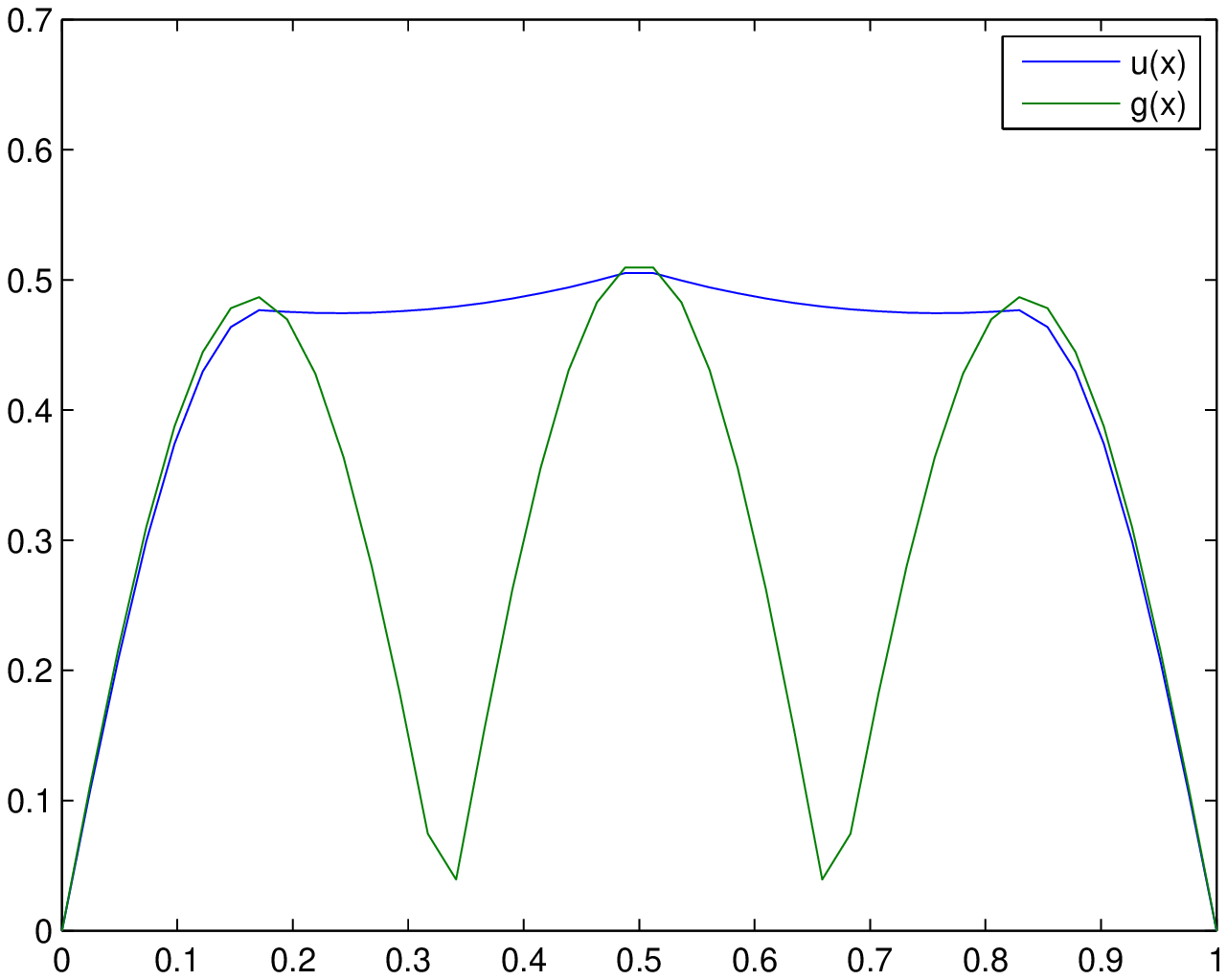}
 \includegraphics[width=5.9cm,bb=   73   252   521   589]{./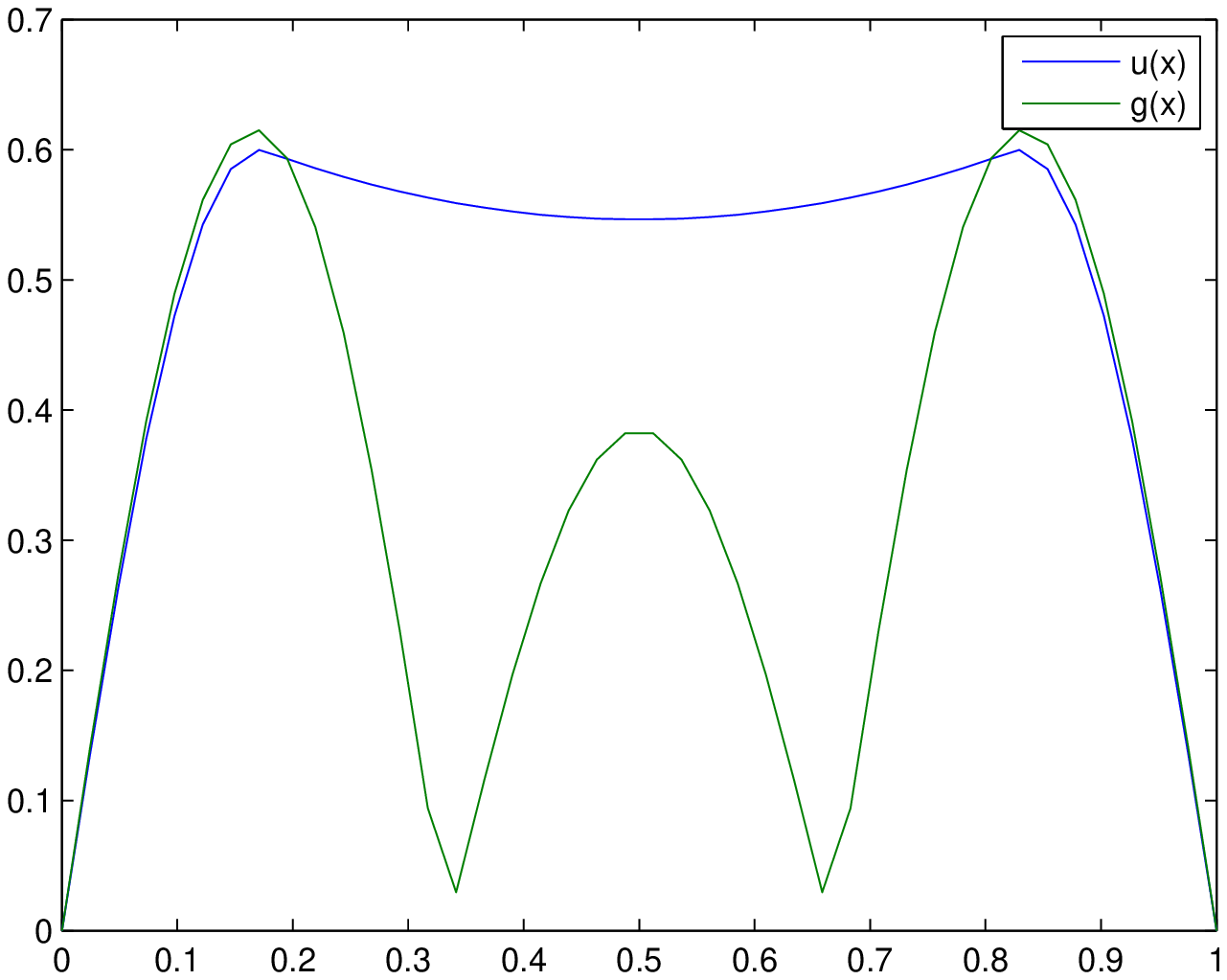}
  \caption{Profile of $u$ and $g$ for $T=0.5$ (left) and $T=0.625$ (right).}
\end{center}
\vspace*{8pt}
\end{figure*}

As we can observe, the solution does not exactly satisfies the constraint $u(t,x) \geq g(t,x)$. This is due to the fact that we approximate a solution 
$u_{\rho}$ of the penalized problem (\ref{penalized}) for $\rho=2500$. This is the main drawback of our method: we do not approximate directly the solution 
of the initial obstacle problem but the solution of a close regularized problem. Indeed, if we try to further increase the parameter $\rho$, we face the 
main drawback of penalization methods, that is the ill-conditioning of the resulting matrices.

\section{Conclusion}

In this article, we presented a greedy algorithm based on variable decomposition aiming at computing the global minimum of a strongly convex energy functional. We proved 
that, provided that the gradient of the energy is Lipschitz on bounded sets, and that the Hilbert spaces considered satisfy assumptions $(A1)$ and $(A2)$, then the approximation 
given by our algorithm strongly converges towards the desired result. One of the main advantage of the algorithm is that it can deal with highly nonlinear problems. We also proved that 
in finite dimension, this algorithm converges exponentially fast.

We applied this algorithm in the context of uncertainty quantification on obstacle problems. In this frame, we considered regularizations of this kind 
of problems by penalization methods. Indeed, the obstacle problem can be approximated by a global minimization problem defined on the entire Hilbert space of 
some strongly convex energy functional where the constraints of the initial problem are replaced by penalization terms in the expression of the functional. Our algorithm gives a good approximation 
of the solutions of the regularized problem. However, the problem of ill-conditioned matrices, which is inherent to penalization methods, limits the accuracy 
with which we can approach the solution of the initial obstacle problem.

One way to circumvent this problem is to use augmented Lagrangian methods (see Ref.~\cite{For82,Glow89,Gros07})  instead of penalization methods.
 Indeed, the former algorithms converge towards the true solution of the initial obstacle problems. The adaptation of
 our algorithm to such methods is work in progress.

Another extension of our work would be to consider other problems than obstacle problems. In Ref.~\cite{Nouy09}, a similar algorithm based on Proper 
Generalized Decomposition is used to study uncertainty quantification upon a Burger type equation. We believe that it could be possible to extend our 
proof of convergence in the case of such hyperbolic systems.

\section*{Acknowledgment}
We would like to thank the Michelin company for financial support.


\begin{thebibliography}{00}

\bibitem{Chinesta} A. Ammar, B. Mokdad, F. Chinesta and R. Keunings, A New Family of Solvers for Some Classes of Multidimensional
 Partial Differential Equations Encountered in Kinetic Theory Modeling of Complex Fluids, 
{\it Journal of Non-Newtonian Fluid Mechanics} {\bf 139} (2002) 153--176.

\bibitem{Weinan} W. E, W. Ren and E. Van den-Eijnden, A String method for the study of rare events, 
{\it Physical Review B} {\bf 66} (2002) 052301.


\bibitem{For82} M. Fortin and R. Glowinski, {\it M\'ethodes de Lagrangien augment\'e - Application \`a la r\'esolution num\'erique de probl\`emes aux limites} 
(Dunod, 1982).

\bibitem{Ghanem} R.G. Ghanem and P.D. Spanos, {\it Stochastic Finite Elements - A Spectral Approach} 
(Springer-Verlag, 1991).

\bibitem{Glow89} R. Glowinski and P. Le Tallec, {\it Augmented Lagrangian and Operator-Splitting Methods in Nonlinear Mechanics}
(Dunod, 1976).

\bibitem{Glow76} R. Glowinski, J.L. Lions R. and Tr\'emoli\`eres, {\it Analyse num\'erique 
des in\'equations variationnelles - Th\'eorie g\'en\'erale et premi\`eres applications}
(Society for Industrial and Applied Mathematics, 1989).

\bibitem{Gros07} C. Grossmann, H.-G. Roos and M. Stynes, {\it Numerical Treatment of Partial Differential Equations}
(Springer, 2007).

\bibitem{LeBr09} C. Le Bris, T. Leli\`evre and Y. Maday, Results and questions on a nonlinear 
approximation approach for solving high-dimensional partial differential equations, {\it Constructive Approximation} {\bf 30}
(2009) 621--651.

\bibitem{NouyReview} A. Nouy, Recent Developments in Spectral Stochastic Methods for the Numerical Solution of Stochastic Partial Differential Equations, 
{\it Archives of Computational Methods in Engineering} {\bf 16} (2009) 251--285.

\bibitem{Nouy09} A. Nouy and O. Le Maitre, Generalized Spectral Decomposition 
for Stochastic Non Linear Problems, {\it Journal of Computational Physics} {\bf 228} (2009)
202--235.


\bibitem{Schwa66} L. Schwartz, {\it Th\'eorie des distributions} 
(Hermann, 1966).

\bibitem{Temlyakov} V.N. Temlyakov, 
Greedy Approximation, {\it Acta Numerica} {\bf 17} (2008)
235--409.

\bibitem{Smolyak} S.A. Smolyak, Quadrature and interpolation formulas for tensor products of certain classes of functions, {\it Sov Math Dokl} {\bf 3} (1963) 240--243.

\bibitem{Schwab} T. von Petersdorff and C. Schwab, Numerical solution of parabolic equations in high dimensions, {\it M2AN Mathematical Modelling and Numerical Analysis} {\bf 38} (2004) 93--127.

\bibitem{Griebel} H.-J. Bungartz and M. Griebel, Sparse grids, {\it Acta Numerica} {\bf 13} (2004) 147--269.

\bibitem{Scilab} Scilab, http://www.scilab.org


\end{thebibliography}
\end{document}